\journalname{Preprint}
\newcommand{\R}{{\mathbb R}}
\renewcommand{\S}{{\mathbb S}^{d-1}}
\newcommand{\N}{{\mathbb N}}
\newcommand{\ird}[1]{\int_{\R^d}{#1}\;dx}
\newcommand{\nrm}[2]{\|{#1}\|_{\L^{#2}(\R^d)}}
\newcommand{\nrmcnd}[2]{\|{#1}\|_{\L^{#2}(\mathcal C)}}
\newcommand{\icnd}[1]{\int_{\mathcal C}{#1}\;dy}
\newcommand{\be}[1]{\begin{equation}\label{#1}}
\newcommand{\ee}{\end{equation}}
\renewcommand{\(}{\left(}
\renewcommand{\)}{\right)}
\renewcommand{\S}{{\mathbb S^{d-1}}}
\newcommand{\C}[1]{\mathsf C_{\rm #1}}
\renewcommand{\H}{\mathrm H}
\renewcommand{\L}{\mathrm L}
\begin{document}
\title{Radial symmetry and symmetry breaking for some interpolation inequalities}

\author{Jean Dolbeault \and Maria J. Esteban \and Gabriella Tarantello \and Achilles Tertikas}

\institute{
J. Dolbeault \at Ceremade, Univ. Paris-Dauphine, Pl. de Lattre de Tassigny, 75775 Paris C\'edex~16, France
\email{dolbeaul@ceremade.dauphine.fr}
\and
M.J. Esteban \at Ceremade, Univ. Paris-Dauphine, Pl. de Lattre de Tassigny, 75775 Paris C\'edex~16, France
\email{esteban@ceremade.dauphine.fr}
\and
G. Tarantello \at Dipartimento di Matematica. Univ. di Roma ``Tor Vergata", Via della Ricerca Scientifica, 00133 Roma, Italy
\email{tarantel@axp.mat.uniroma2.it}
\and
A. Tertikas \at Department of Mathematics, Univ. of Crete, Knossos Avenue, 714 09 Heraklion
\& Institute of Applied and Computational Mathematics, FORTH, 71110 Heraklion, Crete, Greece
\email{tertikas@math.uoc.gr}
}

\date{\today}

\maketitle

\begin{abstract}
We analyze the radial symmetry of extremals for a class of interpolation inequalities known as Caffarelli-Kohn-Nirenberg inequalities, and for a class of weighted logarithmic Hardy inequalities which appear as limiting cases of the first ones. In both classes we show that there exists a continuous surface that splits the set of admissible parameters into a region where extremals are symmetric and a region where symmetry breaking occurs. In previous results, the symmetry breaking region was identified by showing the linear instability of the radial extremals. Here we prove that symmetry can be broken even within the set of parameters where radial extremals correspond to local minima for the variational problem associated with the inequality. For interpolation inequalities, such a symmetry breaking phenomenon is entirely~new.

\keywords{Sobolev spaces \and interpolation \and Hardy-Sobolev inequality \and Caffarelli-Kohn-Nirenberg inequality \and logarithmic Hardy inequality \and Gagliardo-Nirenberg inequality \and logarithmic Sobolev inequality \and extremal functions \and Kelvin transformation \and scale invariance \and Emden-Fowler transformation \and radial symmetry \and symmetry breaking \and linearization \and existence \and compactness}

\medskip\noindent\emph{2000 Mathematics Subject Classification.} 26D10; 46E35; 58E35; 49J40

\end{abstract}

\section{Introduction and main results}\label{Sec:Intro}

In this paper we are interested in the symmetry properties of extremals for a family of interpolation inequalities established by Caffarelli, Kohn and Nirenberg in~\cite{Caffarelli-Kohn-Nirenberg-84}. We also address the same issue for a class of weighted logarithmic Hardy inequalities which appear as limiting cases of the first ones, see \cite{DDFT,DE2010}.

\medskip More precisely, let $d\in\N^*$, $\theta\in(0,1)$ and define
\[
\vartheta(d,p):=d\,\frac{p-2}{2\,p}\,,\;a_c:=\frac{d-2}2\,,\;\Lambda(a):=(a-a_c)^2\,,\;p(a,b):=\frac{2\,d}{d-2+2\,(b-a)}\;.
\]
Notice that
\[
0\leq \vartheta(d,p)\leq \theta<1\quad\Longleftrightarrow\quad 2\leq p<p^*(d,\theta):=\frac{2\,d}{d-2\,\theta}\leq 2^*\;,
\]
where, as usual, $2^*=p^*(d,1)= \frac{2\,d}{d-2}$ if $d\ge 3$, while we set $2^*=p^*(2,1)=\infty$ if $d=2$. If $d=1$, $\theta$ is restricted to $[0,1/2)$ and we set $2^*=p^*(1,1/2)=\infty$. In this paper, we are concerned with the following interpolation inequalities:
\begin{theorem} \label{Thm:CKN} \cite{Caffarelli-Kohn-Nirenberg-84,DDFT,DE2010} Let $d \ge 1$ and $a<a_c$. \begin{enumerate}
\item[(i)] Let $b \in (a+1/2,a+1]$ when $d=1$, $b \in (a,a+1]$ when $d=2$ and $b \in [a,a+1]$ when $d \ge 3$. In addition, assume that $p=p(a,b)$. For any $\theta \in [ \vartheta(d,p),1 ]$, there exists a finite positive constant $\C{CKN}(\theta,p,\Lambda)$ with $\Lambda=\Lambda(a)$ such that
\be{Ineq:CKN}
\(\ird{\frac{|u|^p}{|x|^{bp}}}\)^\frac 2p\leq \C{CKN}(\theta,p,\Lambda)\(\ird{\frac{|\nabla u|^2}{|x|^{2a}}}\)^\theta\(\ird{\frac{|u|^2}{|x|^{2\,(a+1)}}}\)^{1-\theta}
\ee
for any $u\in \mathcal D^{1,2}_{a}(\R^d)$. Equality in \eqref{Ineq:CKN} is attained for any $p\in(2,2^*)$ and $\theta\in(\vartheta(p,d),1)$ or $\theta=\vartheta(p,d)$ and $a_c-a>0$ not too large. It is not attained if $p=2$, or $a<0$, $p=2^*$ and $d\ge 3$, or $d=1$ and $\theta=\vartheta(p,d)$.
\item[(ii)] Let $\gamma \ge d/4$ and $\gamma>1/2$ if $d=2$. There exists a positive constant $\C{WLH}(\gamma,\Lambda)$ with $\Lambda=\Lambda(a)$ such that, for any $u \in \mathcal D^{1,2}_{a}(\R^d)$, normalized by $\ird{ \frac{|u|^2}{|x|^{2\,(a+1)}}}= 1$, we have:
\be{Ineq:WLH}
\ird{\frac{|u|^2}{|x|^{2\,(a+1)}}\,\log \(|x|^{d-2-2\,a}\,|u|^2 \)}\leq 2\,\gamma\,\log\left[\C{WLH}(\gamma,\Lambda)\,\ird{\frac{|\nabla u|^2}{|x|^{2\,a}}}\right]
\ee
and equality is attained if $\gamma\ge 1/4$ and $d=1$, or $\gamma>1/2$ if $d=2$, or for $d\geq 3$ and either $\gamma>d/4$ or $\gamma=d/4$ and $a_c-a>0$ not too large.
\end{enumerate}
\end{theorem}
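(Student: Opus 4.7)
My plan is to transport both inequalities onto the cylinder $\mathcal C = \mathbb R \times \S$ via the Emden-Fowler change of variables $u(x) = |x|^{a-a_c}\,v(s,\omega)$ with $s = -\log|x|$ and $\omega = x/|x|$. The weights and the Jacobian conspire to eliminate all powers of $|x|$, and the parameter $a$ survives only through $\Lambda = (a-a_c)^2$ in the quadratic form. In these variables \eqref{Ineq:CKN} becomes
\[
\|v\|_{\L^p(\mathcal C)}^2 \leq \C{CKN}(\theta,p,\Lambda)\bigl(\|\nabla v\|_{\L^2(\mathcal C)}^2 + \Lambda\,\|v\|_{\L^2(\mathcal C)}^2\bigr)^{\theta}\,\|v\|_{\L^2(\mathcal C)}^{2(1-\theta)},
\]
while \eqref{Ineq:WLH} turns into a logarithmic Sobolev-type inequality on $\mathcal C$. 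The inequalities themselves then follow by combining the Sobolev embedding $\H^1(\mathcal C) \hookrightarrow \L^{2^*}(\mathcal C)$ with a H\"older interpolation between $\L^2$ and $\L^{2^*}$ on the cylinder; the condition $\theta \geq \vartheta(d,p)$ is precisely the scaling-compatible range, and \eqref{Ineq:WLH} is recovered from \eqref{Ineq:CKN} in the limit $p \to 2^+$ with $\theta = \gamma(p-2)$ via Taylor expansion of both sides, as in \cite{DDFT,DE2010}. This part is essentially routine.

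The attainment statements form the heart of the theorem, and I would address them by a concentration-compactness analysis of a minimizing sequence $\{v_n\}$ on $\mathcal C$ for the relevant quotient. Three obstructions must be excluded: (a) translation along the $s$-axis of $\mathcal C$, handled by recentering $v_n$ so that a fixed fraction of the mass stays in a bounded window; (b) vanishing, ruled out by a uniform positive lower bound on the quotient; and (c) concentration into a bubble blowing up at a point of $\mathcal C$, which locally ``sees'' only the unweighted Sobolev inequality on $\mathbb R^d$. The decisive step is to establish the strict inequality between the cylindrical infimum and the corresponding Euclidean critical constant (the Aubin-Talenti constant when $p = 2^*$). This strict inequality is automatic when $\theta > \vartheta(d,p)$ (respectively $\gamma > d/4$), because the $\Lambda\,\|v\|_{\L^2(\mathcal C)}^2$ term cannot be absorbed into a concentrating bubble. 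In the scale-invariant boundary case $\theta = \vartheta(d,p)$ it holds only when $a_c-a$ is small enough, since only then does the $\Lambda$ term create a sufficient gap with the Euclidean constant.

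The non-attainment cases mirror this picture: for $p = 2$ the inequality reduces to a weighted Hardy inequality whose optimal constant is well known not to be attained; for $d \geq 3$, $p = 2^*$ and $a < 0$, the Euclidean extremals (Aubin-Talenti bubbles) transported back to $\mathbb R^d$ fail to belong to $\mathcal D^{1,2}_a(\mathbb R^d)$ because of the weight; and for $d = 1$ with $\theta = \vartheta(1,p)$, the problem on $\mathcal C = \mathbb R$ is translation invariant and the loss of compactness cannot be broken. I expect step (c) above, and in particular the proof of the strict inequality in the scale-invariant boundary cases $\theta = \vartheta(d,p)$ and $\gamma = d/4$ together with the quantitative threshold on $a_c - a$, to be the main technical obstacle.
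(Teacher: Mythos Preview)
The paper does not prove Theorem~\ref{Thm:CKN}; immediately after stating it the authors write that inequalities~\eqref{Ineq:CKN} and~\eqref{Ineq:WLH} ``are respectively the main results of \cite{Caffarelli-Kohn-Nirenberg-84} and \cite{DDFT}'' and that ``existence of extremals has been studied in \cite{DE2010}.'' So there is no in-paper proof to compare against. What the paper does provide (Section~\ref{Sec:Emden-Fowler}) is exactly the Emden--Fowler reduction to the cylinder that you describe, yielding \eqref{Ineq:Gen_interp_Cylinder} and \eqref{Ineq:GLogHardy-w}, and later (Remark~\ref{EstmDE}, proof of Lemma~\ref{Lem:LambdaStarCrit}) the H\"older/Sobolev interpolation and the comparison with the Gagliardo--Nirenberg constant $\C{GN}(p)$ that underlie both the validity of the inequality and the existence criterion from \cite{DE2010}. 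Your outline is therefore consistent with the arguments in the cited references as they are summarized and used in the present paper.

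One correction of detail: in the non-attainment case $d\ge 3$, $p=2^*$, $a<0$, the reason is not that Aubin--Talenti bubbles ``fail to belong to $\mathcal D^{1,2}_a(\R^d)$''. For $a<0$ the weight $|x|^{-2a}$ is bounded near the origin, and translated bubbles do lie in the space. The actual mechanism is that after the substitution $u=|x|^a v$ (Section~\ref{Sec:Symmetrization}) the problem becomes $\nrm v{2^*}^2\le C\,(\nrm{\nabla v}2^2-\lambda\,\nrm{|x|^{-1}v}2^2)$ with $\lambda=a(2a_c-a)<0$; translating a bubble to infinity drives the Hardy term to zero, so the infimum equals the flat Sobolev constant, yet no function attains it because the Hardy term is strictly positive for any nonzero $v$. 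This is a loss of compactness by translation, not a membership failure.
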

Caffarelli-Kohn-Nirenberg interpolation inequalities \eqref{Ineq:CKN} and the weighted logarithmic Hardy inequality \eqref{Ineq:WLH} are respectively the main results of \cite{Caffarelli-Kohn-Nirenberg-84} and \cite{DDFT}. Existence of extremals has been studied in \cite{DE2010}. We shall assume that all constants in the inequalities are taken with their optimal values. For brevity, we shall call \emph{extremals} the functions which attain equality in \eqref{Ineq:CKN} or in \eqref{Ineq:WLH}. Note that the set $\mathcal D_a^{1,2}(\R^d)$ denotes the completion with respect to the norm
\[
u\mapsto\|\,|x|^{-a}\,\nabla u\,\|_{\L^2(\R^d)}^2+\|\,|x|^{-(a+1)}\,u\,\|_{\L^2(\R^d)}^2
\]
of the set $\mathcal D(\R^d\setminus\{0\})$ of smooth functions with compact support contained in $\R^d\setminus\{0\}$.

The parameters $a<a_c$ and $\Lambda=\Lambda(a)>0$ are in one-to-one correspondence and it could look more natural to ask the constants $\C{CKN}$ and $\C{WLH}$ to depend on $a$ rather than on $\Lambda$. As we shall see later, it turns out to be much more convenient to express all quantities in terms of $\Lambda$, once the problem has been reformulated using the Emden-Fowler transformation. Furthermore, we can notice that the restriction $a<a_c$ can be removed using a transformation of Kelvin type: see \cite{DET} and Section~\ref{Sec:Emden-Fowler} for details.

In the sequel we will denote by $\C{CKN}^*(\theta,p,\Lambda)$ and $\C{WLH}^*(\gamma,\Lambda)$ the optimal constants in \eqref{Ineq:CKN} and \eqref{Ineq:WLH} respectively, when considered among radially symmetric functions. In this case the corresponding extremals are known (see \cite{DDFT}) and the constants can be explicitly computed:
\[\label{Eqn:OptC*}
\textstyle\C{CKN}^*(\theta,p,\Lambda):=\left[\frac{\Lambda\,(p-2)^2}{2+(2\theta-1)\,p}\right]^\frac{p-2}{2\,p}
\left[\frac{2+(2\theta-1)\,p}{2\,p\,\theta\,\Lambda}\right]^\theta \left[\frac 4{p+2}\right]^\frac{6-p}{2\,p}\left[\frac{\Gamma\left(\frac{2}{p-2}+\frac 12\right)}{\sqrt\pi\;\Gamma\left(\frac{2}{p-2}\right)}\right]^\frac{p-2}{p}\,,
\]
\begin{multline*}\label{Eqn:OptCglh}
\textstyle\C{WLH}^*(\gamma,\Lambda) = \frac{1}{4\,\gamma}\,\frac{ \left[ \Gamma \(\frac{d}{2}\) \right]^{\frac{1}{2\,\gamma}}}{(2\,\pi^{d+1}\,e)^{\frac{1}{4\,\gamma}}} \(\frac{4\,\gamma -1}{\Lambda}\)^{ \frac{4\,\gamma -1}{4\,\gamma}}\quad\mbox{if}\quad\gamma>\frac 14\\
\textstyle\mbox{and}\quad\C{WLH}^*(\tfrac 14,\Lambda)=\frac{ \left[ \Gamma \(\frac{d}{2}\) \right]^2}{2\,\pi^{d+1}\,e}\;.
\end{multline*}
Notice that $\gamma=1/4$ is compatible with the condition $\gamma\ge d/4$ only if $d=1$. The constant $\C{WLH}^*(1/4,\Lambda)$ is then independent of $\Lambda$. 

By definition, we know that
\[\label{Eq:estimations_constantes}
\C{CKN}^*(\theta,p,\Lambda)\leq \C{CKN}(\theta,p,\Lambda) \quad\mbox{ and }\quad \C{WLH}^*(\gamma,\Lambda)\leq\C{WLH}(\gamma,\Lambda)\;.
\]
The main goal of this paper is to distinguish the set of parameters $(\theta,p,\Lambda)$ and $(\gamma,\Lambda)$ for which equality holds in the above inequalities from the set where the inequality is strict.

To this purpose, we recall that
when $\theta=1$ and $d\geq 2$, symmetry breaking for extremals of \eqref{Ineq:CKN} has been proved in \cite{Catrina-Wang-01,Felli-Schneider-03,DET} when
\[
a<0\quad\mbox{and}\quad p>\frac 2{a_c-a}\,\sqrt{\Lambda(a)+d-1}\;.
\]
In other words, for $\theta=1$ and
\[\label{FSnew}
a<A(p):= a_c-2\,\sqrt{\frac{d-1}{(p+2)(p-2)}}<0\;,
\]
we have $\C{CKN}^*(\theta,p,\Lambda)<\C{CKN}(\theta,p,\Lambda)$. This result has been extended to the case $\theta\in[\vartheta(p,d),1]$ in \cite{DDFT}. Let
\[
\Theta(a,p,d):=\frac{p-2}{32\,(d-1)\,p}\,\left[ (p+2)^2\,(d^2 + 4\,a^2- 4\,a\,(d-2))-4\,p\,(p+4)\,(d-1) \right]
\]
and
\[
a_-(p):= a_c-\frac{2\,(d-1)}{p+2}\;.
\]
\begin{proposition}\label{Thm:CKN-SymmetryBreaking} {\rm \cite{DDFT}} Let $d\ge 2$, $2<p<2^*$ and $a<a_-(p)$. Optimality for \eqref{Ineq:CKN} is {\rm not} achieved among radial functions if
\begin{enumerate}
\item[(i)] either $\vartheta(p,d)\le\theta<\Theta(a,p,d)$ and $a\geq A(p)$,
\item[(ii)] or $\vartheta(p,d)\le\theta\leq 1$ and $a<A(p)$.
\end{enumerate}\end{proposition}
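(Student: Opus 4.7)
My plan is to perform a spectral linearization of the constrained minimization problem associated with \eqref{Ineq:CKN}, around the (explicitly known) radial extremal. First I reduce matters to the cylinder $\mathcal C:=\R\times\S$ via the Emden-Fowler change of variables: writing $x=r\,\omega\in\R^d$ with $r>0$ and $\omega\in\S$, setting $s:=-\log r$ and $v(s,\omega):=r^{a_c-a}\,u(x)$, the weights in \eqref{Ineq:CKN} are absorbed and the inequality becomes, up to multiplicative constants,
\[
\nrmcnd vp^2\le\C{CKN}(\theta,p,\Lambda)\,\bigl(\nrmcnd{\nabla v}2^2+\Lambda\,\nrmcnd v2^2\bigr)^\theta\,\nrmcnd v2^{2\,(1-\theta)},
\]
with $\Lambda=\Lambda(a)$ and with radial $u$ corresponding to $v=v(s)$. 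The one-dimensional radial extremal $v_\star(s)=\bigl(\cosh(\lambda\,s)\bigr)^{-2/(p-2)}$, for a suitable $\lambda=\lambda(\theta,p,\Lambda)$, attains $\C{CKN}^*(\theta,p,\Lambda)$.

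To establish $\C{CKN}^*(\theta,p,\Lambda)<\C{CKN}(\theta,p,\Lambda)$ it is enough to exhibit a non-radial $w$ along which the logarithmic functional
\[
\mathcal F[v]:=\theta\,\log\bigl(\nrmcnd{\nabla v}2^2+\Lambda\,\nrmcnd v2^2\bigr)+(1-\theta)\,\log\nrmcnd v2^2-\log\nrmcnd vp^2
\]
decreases to second order at $v_\star$. Expanding a perturbation as $w(s,\omega)=\sum_{k\ge 0}\phi_k(s)\,Y_k(\omega)$ in spherical harmonics, with $-\Delta_{\S}Y_k=\mu_k\,Y_k$ and $\mu_k=k\,(k+d-2)$, the Hessian $\mathcal F''(v_\star)$ splits into one-dimensional quadratic forms $Q_k[\phi_k]$. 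Radial optimality of $v_\star$ ensures $Q_0\ge 0$, while for $k\ge 1$ the form $Q_k$ carries an additional mass term proportional to $\mu_k$ coming from the Laplacian on $\S$. The first non-radial mode $k=1$, with $\mu_1=d-1$, is therefore the first candidate for instability.

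The core step is the analysis of $\inf Q_1$. After normalizing so that $\lambda=1$, $Q_1$ involves a Schr\"odinger operator with $\cosh^{-2}(s)$ potential of P\"oschl-Teller type whose low-lying spectrum is explicitly computable. Testing against an appropriate eigenfunction, $Q_1<0$ becomes equivalent to a polynomial inequality in $(\theta,a,p,d)$ which, after simplification, is exactly $\theta<\Theta(a,p,d)$. Under $a<a_-(p)$ one checks that $\Theta(a,p,d)>\vartheta(p,d)$, so the range $\vartheta(p,d)\le\theta<\Theta(a,p,d)$ is non-empty; this is case~(i), valid for $a\ge A(p)$. For $a<A(p)$ the threshold $\Theta$ exceeds~$1$, so the Felli-Schneider test function already used for $\theta=1$ in \cite{Catrina-Wang-01,Felli-Schneider-03} continues to violate $Q_1\ge 0$ for every admissible $\theta$, which proves case~(ii) by following \cite{DDFT}. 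The main obstacle is the explicit identification of $\inf Q_1$: it requires the right choice of test function and a rather delicate algebraic manipulation to recognize the coefficient polynomial $(p+2)^2\,(d^2+4\,a^2-4\,a\,(d-2))-4\,p\,(p+4)\,(d-1)$ hidden in the definition of $\Theta(a,p,d)$.
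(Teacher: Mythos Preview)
Your proposal is correct and follows essentially the same approach that the paper sketches in Section~\ref{Sec:LinearInstability} (the proposition itself is quoted from \cite{DDFT} and not re-proved in detail here): one passes to the cylinder via the Emden-Fowler transformation, linearizes the functional $\mathcal F_{\theta,p,\Lambda}$ around the $s$-symmetric extremal, and shows that the linearized operator has a negative eigenvalue in the direction of the first spherical harmonic precisely when $a<\underline a(\theta,p)$, i.e.\ $\theta<\Theta(a,p,d)$. Your logarithmic functional is simply $\log\bigl(\mathcal F_{\theta,p,\Lambda}^{\,\theta}\bigr)$, so the second-variation analyses coincide.
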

More precisely, one sees that symmetry breaking occurs if $\theta<\Theta(a,p,d)$. We observe that, for $p\in [2,2^*)$, we have $\vartheta(p,d)<\Theta(a,p,d)$ if and only if $a<a_-(p)$. The condition $\Theta(a,p,d) \le 1$ is equivalent to $a\ge A(p)$.

By rewriting the condition $\theta<\Theta(a,p,d)$ in terms of $a$, we find that in the set $\{(\theta,p)\;:\;\vartheta(p,d)\leq \theta\leq 1\,,\;p\in (2, 2^*)\}$ the function
\be{functionabar}
\underline a(\theta,p):=a_c-\frac{2\,\sqrt{d-1}}{p+2}\,\sqrt{\frac{2\,p\,\theta}{p-2}-1}
\ee
takes values in $(-\infty, a_c)$ and is such that symmetry breaking holds for any $a<\underline a(\theta,p)$. Notice in particular that $a_-(p)=\underline a(\vartheta(p,d),p)$ and that we recover the condition $a<A(p)$ for $\theta=1$.

Before going further, let us comment on the nature of the above symmetry breaking result. Among radially symmetric functions, extremals are uniquely defined up to a multiplication by a constant and a scaling. Denote by $u^*$ the unique radial extremal in \eqref{Ineq:CKN} under an appropriate normalization (see \cite{DDFT} for details). Conditions $a<A(p)=\underline a(1,p)$ for $\theta=1$ and $a<\underline a(\theta,p)$ for $\theta<1$ correspond exactly to the values of the parameters for which the linearized operator associated to the functional $\mathcal F_{\theta,p,\Lambda}$ (see Section~\ref{Sec:LinearInstability}) around $u^*$ in the space orthogonal to the radial functions admits a \emph{negative} eigenvalue, while it is \emph{positive definite} for $a>\underline a(\theta,p)$. Thus, in the first case, $u^*$ no longer corresponds to a minimizer for the variational problem associated with the inequality. Also notice that, if for a sequence of \emph{non-radial} extremals $(u_n)_n$, $(a_n)_n$ converges to some $a$ and $(u_n)_n$ converges to a \emph{radial} extremal $u^*$, then $a=\underline a(\theta,p)$.

\medskip As in \cite{DDFT}, it is worthwhile to observe that if $a<-1/2$, then
\[
\frac d4=\frac{\partial}{\partial p}\vartheta(p,d)_{|p=2}<\;\frac{\partial}{\partial p}\Theta(a,p,d)_{|p=2}=\frac 14+\frac{\Lambda(a)}{d-1}\;.
\]
This is consistent with the limiting case $\theta=\gamma\,(p-2)$ and $p\to 2_+$ corresponding to the the weighted logarithmic Hardy inequality \eqref{Ineq:WLH}.
\begin{proposition}\label{Thm:WLH-SymmetryBreaking} {\rm \cite{DDFT}} Let $d \geq 2$ and $a<-1/2$. Assume that $\gamma>1/2$ if $d=2$ and
\[
\frac d4\leq \gamma<\frac 14 + \frac{\Lambda(a)}{d-1}\;,
\]
then the optimal constant $\C{WLH}(\gamma,\Lambda(a))$ in inequality \eqref{Ineq:WLH} is not achieved by a radial function. \end{proposition}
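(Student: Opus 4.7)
The strategy is a linear-instability argument analogous to the one behind Proposition~\ref{Thm:CKN-SymmetryBreaking}: I show that when $\gamma < 1/4 + \Lambda(a)/(d-1)$, the radial extremal $u^*$ of the weighted logarithmic Hardy inequality fails to be a local minimizer of the associated variational problem. Since extremals exist in this regime by Theorem~\ref{Thm:CKN}~(ii), this forces $\C{WLH}^*(\gamma,\Lambda) < \C{WLH}(\gamma,\Lambda)$.

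First I would apply the Emden-Fowler transformation (see Section~\ref{Sec:Emden-Fowler}) to recast the problem on the cylinder $\mathcal C = \R \times \S$. After this change of variables the normalizing integral $\ird{|x|^{-2(a+1)}|u|^2}$ becomes the standard $\L^2(\mathcal C)$-norm, the Dirichlet term $\ird{|x|^{-2a}|\nabla u|^2}$ becomes $\int_{\mathcal C}\bigl(|\partial_s v|^2+|\nabla_\omega v|^2+\Lambda\,|v|^2\bigr)\,dy$, and $u^*$ becomes an explicit Gaussian profile $v^*(s)$, since the optimal radial constant $\C{WLH}^*(\gamma,\Lambda)$ is given in closed form.

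Second I would compute the second variation at $v^*$, under the $\L^2(\mathcal C)$-constraint, of the functional
\[
v\mapsto 2\gamma\,\log\Bigl[\C{WLH}^*(\gamma,\Lambda)\!\int_{\mathcal C}\!\bigl(|\partial_s v|^2+|\nabla_\omega v|^2+\Lambda|v|^2\bigr)\,dy\Bigr] - \int_{\mathcal C}|v|^2\log|v|^2\,dy\,,
\]
and decompose an admissible non-radial perturbation into spherical harmonics $Y_\ell$ on $\S$. Each mode yields a one-dimensional Schr\"odinger-type quadratic form in the axial variable $s$; the smallest value is attained at the first non-constant mode $\ell=1$, corresponding to the eigenvalue $d-1$ of $-\Delta_\omega$. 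Because $v^*$ is Gaussian, the associated operator is conjugate to a harmonic oscillator whose lowest eigenvalue is explicit and turns out to be negative precisely when $\gamma < 1/4 + \Lambda/(d-1)$.

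The main obstacle is the correct identification of this quadratic form: \eqref{Ineq:WLH} is not scale-homogeneous in $u$, and the logarithm on the right-hand side must be expanded to second order with care about the normalization constraint. I would handle this by performing the calculation as the $p\to 2_+$ limit (with $\theta=\gamma(p-2)$) of the analogous computation done for \eqref{Ineq:CKN} in \cite{DDFT}, so that the critical threshold emerges directly from the expansions $\vartheta(p,d)=(d/4)(p-2)+o(p-2)$ and $\Theta(a,p,d)=\bigl(1/4+\Lambda(a)/(d-1)\bigr)(p-2)+o(p-2)$ recorded just before the statement. The hypothesis $a<-1/2$ guarantees $\Lambda(a)>(d-1)^2/4$, hence $1/4+\Lambda(a)/(d-1)>d/4$, so the admissible interval $d/4\le\gamma<1/4+\Lambda(a)/(d-1)$ (with $\gamma>1/2$ when $d=2$) is nonempty.
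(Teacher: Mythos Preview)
Your approach is essentially the one the paper (following \cite{DDFT}) uses: Emden--Fowler transformation, linearization around the $s$-symmetric extremal, spherical-harmonic decomposition, and identification of the instability threshold $\Lambda>\tilde\Lambda(\gamma)=\tfrac14(d-1)(4\gamma-1)$, which is equivalent to $\gamma<\tfrac14+\Lambda(a)/(d-1)$.

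Two small remarks. First, the ``main obstacle'' you flag---the lack of scale-homogeneity and the constraint---is avoided in the paper by linearizing the scale-invariant quotient
\[
\mathcal G_{\gamma,\Lambda}[w]=\frac{\|\nabla w\|_{\L^2(\mathcal C)}^2+\Lambda\,\|w\|_{\L^2(\mathcal C)}^2}{\|w\|_{\L^2(\mathcal C)}^2\,\exp\!\Big(\tfrac1{2\gamma}\int_{\mathcal C}\tfrac{w^2}{\|w\|_2^2}\log\tfrac{w^2}{\|w\|_2^2}\,dy\Big)}
\]
of \eqref{G} rather than the difference functional you wrote; this makes the second variation unconstrained and the Gaussian $w^*$ a genuine critical point, so your detour through the $p\to 2_+$ limit of \eqref{Ineq:CKN} is unnecessary (though it does give the right threshold, as the paper notes just before the proposition). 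Second, you do not need existence of extremals to conclude: the instability of $w^*$ directly produces a test function with $\mathcal G_{\gamma,\Lambda}[w]<\mathcal G_{\gamma,\Lambda}[w^*]=1/\C{WLH}^*(\gamma,\Lambda)$, hence $\C{WLH}^*(\gamma,\Lambda)<\C{WLH}(\gamma,\Lambda)$, and no radial function can be optimal.
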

In other words, letting
\be{1-8}
\tilde a(\gamma):=a_c-\frac 12\sqrt{(d-1)(4\,\gamma-1)}\;
\ee
then, for any given $\gamma>d/4$, symmetry breaking occurs whenever $a\in(-\infty,\tilde a(\gamma))$.

\medskip A first step of our analysis is to counterbalance the above symmetry breaking results with some symmetry results. To this purpose we recall that for $\theta=1$, radial symmetry for extremals of \eqref{Ineq:CKN} was proved by various methods in \cite{MR1223899,MR1731336,0902} if $0\le a<a_c$. We shall extend these results to the case $\theta<1$ using the method of \cite{0902}. Our first new result is based on Schwarz' symmetrization, and states the following:
\begin{theorem}\label{Thm:Schwarz} For any $d\ge 3$, $p\in(2,2^*)$, there is a curve $\theta\mapsto\bar a(\theta,p)$ such that, for any $a\in[\bar a(\theta,p),a_c)$, $\C{CKN}(\theta,p,\Lambda(a))=\C{CKN}^*(\theta,p,\Lambda(a))$. Moreover,
$\lim_{\theta \to 1_-}\bar a(\theta,p)=0$, and $\lim_{\theta \to 0_+}\bar a(\theta,p)=a_c$.\end{theorem}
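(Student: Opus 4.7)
The plan is to extend the Schwarz-symmetrization method introduced for $\theta=1$ in~\cite{0902} to the interpolated regime $\theta<1$, producing for each admissible $(\theta,p)$ a threshold $\bar a(\theta,p)<a_c$ above which the supremum defining $\C{CKN}(\theta,p,\Lambda(a))$ is attained by a radial function, so that $\C{CKN}(\theta,p,\Lambda(a))=\C{CKN}^*(\theta,p,\Lambda(a))$.

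First I would reformulate \eqref{Ineq:CKN} through the substitution $u(x)=|x|^{a}v(x)$. Expanding $\nabla u$ and integrating by parts, together with the identity $a(d-2-a)=a_c^2-\Lambda(a)$, yield
\begin{align*}
\ird{\frac{|\nabla u|^2}{|x|^{2a}}}&=\ird{|\nabla v|^2}-\bigl(a_c^2-\Lambda(a)\bigr)\ird{\frac{v^2}{|x|^2}},\\
\ird{\frac{u^2}{|x|^{2(a+1)}}}&=\ird{\frac{v^2}{|x|^2}},\\
\ird{\frac{|u|^p}{|x|^{bp}}}&=\ird{\frac{|v|^p}{|x|^{cp}}},
\end{align*}
where $c=b-a=1-\vartheta(d,p)\in(0,1]$. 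In the new variable $v$, both the $L^p$ weight $|x|^{-cp}$ and the $L^2$ weight $|x|^{-2}$ are radially non-increasing, so that Schwarz symmetrization is well adapted to the reformulated inequality.

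Next I would apply the Schwarz symmetrization $v\mapsto v^*$ and invoke three classical rearrangement inequalities: the P\'olya--Szeg\H o inequality $\ird{|\nabla v^*|^2}\le\ird{|\nabla v|^2}$, and the Hardy--Littlewood inequalities $\ird{(v^*)^2/|x|^2}\ge\ird{v^2/|x|^2}$ and $\ird{|v^*|^p/|x|^{cp}}\ge\ird{|v|^p/|x|^{cp}}$. Under $v\mapsto v^*$, the left-hand side of the transformed \eqref{Ineq:CKN} increases (favorable for the Rayleigh quotient); the first factor $A[v]^\theta$ on the right-hand side, with $A[v]:=\ird{|\nabla v|^2}-(a_c^2-\Lambda(a))\ird{v^2/|x|^2}$, decreases (favorable, since $\theta>0$); while the second factor $B[v]^{1-\theta}$ with $B[v]:=\ird{v^2/|x|^2}$ increases (unfavorable, since $1-\theta>0$). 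To show that the net effect on the interpolated product $A^\theta B^{1-\theta}$ is favorable, I would use the sharp Hardy inequality $\ird{|\nabla v|^2}\ge a_c^2\ird{v^2/|x|^2}$ to quantify the relative changes of $A$ and $B$ under symmetrization and combine them via the exponents $\theta$ and $1-\theta$. The requirement that the gain dominate the loss reduces to an algebraic inequality in the parameters $\theta$, $p$ and $\Lambda(a)$; the locus at which it is saturated defines $\bar a(\theta,p)$.

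The main obstacle is this last quantitative balancing: the rate at which $\ird{|\nabla v|^2}$ contracts under Schwarz symmetrization must dominate, in the interpolated product, the rate at which $\ird{v^2/|x|^2}$ expands. The two stated limits are then transparent. As $\theta\to 1_-$ the exponent $1-\theta$ vanishes, so only the gradient-type factor $A^\theta$ matters and the classical weighted P\'olya--Szeg\H o argument for $u$ is valid as soon as $a\ge 0$, whence $\bar a(\theta,p)\to 0$. As $\theta\to 0_+$ only the $L^2$-Hardy factor $B^{1-\theta}$ survives, and its unfavorable monotonicity under symmetrization can be compensated only when $\Lambda(a)=(a_c-a)^2$ is close to zero, forcing $a\to a_c$, so $\bar a(\theta,p)\to a_c$. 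Continuity of the algebraic balancing condition in $(\theta,p)$ yields a continuous curve $\bar a$.
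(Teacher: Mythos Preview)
Your setup is correct and matches the paper's: the substitution $u=|x|^a v$, the identification of $A[v]=\ird{|\nabla v|^2}-\lambda\,B[v]$ with $\lambda=a(2a_c-a)$ and $B[v]=\ird{v^2/|x|^2}$, and the observation that under Schwarz symmetrization the $\L^p$ term increases, $A$ decreases and $B$ increases. The paper proceeds exactly this way.

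The genuine gap is in your ``quantitative balancing'' step. You write that ``the requirement that the gain dominate the loss reduces to an algebraic inequality in the parameters $\theta$, $p$ and $\Lambda(a)$''. This is not so: the condition under which $(A-\lambda B)^\theta B^{1-\theta}$ decreases when $A$ decreases and $B$ increases is
\[
(1-\theta)\,\ird{|\nabla v|^2}<\lambda\,\ird{\frac{v^2}{|x|^2}}\,,
\]
i.e.\ an upper bound on the \emph{function-dependent} ratio $t:=A/B-a_c^2$. The sharp Hardy inequality you invoke gives only the \emph{lower} bound $t\ge 0$, which goes in the wrong direction and cannot close the argument. In particular, for any choice of the parameters there are functions $v$ with $t$ arbitrarily large, so no purely parametric condition can make the symmetrization favorable for all competitors.

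What the paper actually does is restrict attention to extremals of \eqref{Ineq:CKN} and import an \emph{a priori} upper bound on $t$ for such extremals, namely the estimate \eqref{Estim:t2} (established in~\cite[Lemma~3.4]{DE2010} via comparison with the radial constant $\C{CKN}^*$ and Sobolev's inequality). The curve $\bar a(\theta,p)$ is then defined as the value of $a$ at which this upper bound on $t$ exactly matches the right-hand side of the monotonicity condition \eqref{Estim:t1}; the two bounds move in opposite directions in $a$, which yields the threshold and the stated limits. Without this external a~priori estimate on extremals, the symmetrization argument has no starting point, and your proposal does not supply a substitute for it.
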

At this point, $d=2$ is not covered and we have no corresponding result for the weighted logarithmic Hardy inequality. Actually, numerical computations (see Fig.~1) do not indicate that our method, which is based on Schwarz' symmetrization, could eventually apply to the logarithmic Hardy inequality.

\medskip As for the case $\theta=1$, $d\ge2$, where symmetry is known to hold for \eqref{Ineq:CKN} in a neighborhood of $a=0_-$, for $b>0$: see \cite{MR2053993,Lin-Wang-04,MR2001882,DET,0902}, the symmetry result of Theorem \ref{Thm:Schwarz} is far from sharp. Indeed, for $\theta=1$, it has recently been proved in~\cite{0902} that symmetry also holds for $p$ in a neighborhood of~$2_+$, and that there is a continuous curve $p\mapsto a(p)$ such that symmetry holds for any $a\in(a(p),a_c)$, while extremals are not radially symmetric if $a\in(-\infty,a(p))$. We shall extend this result to the more general interpolation inequalities \eqref{Ineq:CKN} and to \eqref{Ineq:WLH}.

Notice that establishing radial symmetry in the case $0<\theta<1$ in \eqref{Ineq:CKN} poses a more delicate problem than when $\theta=1$, because of the term $\nrm{|x|^{-(a+1)}\,u}2^{2\,(1-\theta)}$. Nonetheless, by adapting the arguments of \cite{0902}, we shall prove that a continuous surface splits the set of parameters into two sets that identify respectively the symmetry and symmetry breaking regions. The case $d=2$ is also covered, while it was not in Theorem~\ref{Thm:Schwarz}.
\begin{theorem}\label{Thm:Main} For all $d\geq 2$, there exists a continuous function $a^*$ defined on the set $\{(\theta,p)\in(0,1]\times(2,2^*)\,:\,\theta>\vartheta(p,d)\}$ with values in $(-\infty,a_c)$ such that $\displaystyle\lim_{p\to 2_+}a^*(\theta,p)=-\infty$ and \begin{itemize}
\item[(i)] If $(a,p)\in(a^*(\theta,p),a_c)\times(2,2^*)$, \eqref{Ineq:CKN} has only radially symmetric extremals.
\item[(ii)] If $(a,p)\in(-\infty,a^*(\theta,p))\times(2,2^*)$, none of the extremals of \eqref{Ineq:CKN} is radially symmetric.
\item[(iii)] For every $p\in (2, 2^*)$, $\underbar a(\theta,p)\le a^*(\theta,p)\le\bar a(\theta,p)<a_c$.
\end{itemize}
\end{theorem}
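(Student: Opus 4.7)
The plan is to work in the cylindrical setting obtained from the Emden-Fowler transformation (cf.\ Section \ref{Sec:Emden-Fowler}), which rewrites \eqref{Ineq:CKN} as an equivalent inequality on $\mathcal C = \R\times\S$
\[
\|v\|_{\L^p(\mathcal C)}^2 \le \C{CKN}(\theta,p,\Lambda)\,\bigl(\|\nabla v\|_{\L^2(\mathcal C)}^2 + \Lambda\,\|v\|_{\L^2(\mathcal C)}^2\bigr)^\theta\,\|v\|_{\L^2(\mathcal C)}^{2(1-\theta)},
\]
with $\Lambda=(a_c-a)^2$ appearing as a zeroth-order coefficient; $x$-radial functions correspond exactly to functions of $s\in\R$ alone. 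In the range $\theta>\vartheta(p,d)$ extremals exist by \cite{DE2010}, hence the optimal constants are attained and continuous in $(\theta,p,\Lambda)$. I set
\[
a^*(\theta,p) := \inf\bigl\{a<a_c : \C{CKN}(\theta,p,\Lambda(a)) = \C{CKN}^*(\theta,p,\Lambda(a))\bigr\}.
\]
Theorem \ref{Thm:Schwarz} shows the set on the right contains $[\bar a(\theta,p),a_c)$, while Proposition \ref{Thm:CKN-SymmetryBreaking} shows it does not meet $(-\infty,\underline a(\theta,p))$; hence $\underline a(\theta,p) \le a^*(\theta,p) \le \bar a(\theta,p) < a_c$, which is (iii).

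The heart of the proof is to show that the symmetry region $\{a : \C{CKN}=\C{CKN}^*\}$ is an interval of the form $[a^*,a_c)$, by adapting the strategy of \cite{0902} from $\theta=1$ to every $\theta\in(\vartheta(p,d),1]$. The idea is to prove that $\Lambda\mapsto \C{CKN}(\theta,p,\Lambda)/\C{CKN}^*(\theta,p,\Lambda)$ is non-decreasing: by the envelope theorem applied at the attained optima, one has $\partial_\Lambda\log(1/\C{CKN}) = \theta\,\rho[v_\Lambda]$ and $\partial_\Lambda\log(1/\C{CKN}^*) = \theta\,\rho[v^*_\Lambda]$, where $\rho[v] := \|v\|_{\L^2(\mathcal C)}^2/(\|\nabla v\|_{\L^2(\mathcal C)}^2+\Lambda\|v\|_{\L^2(\mathcal C)}^2)$ and $v_\Lambda$, $v^*_\Lambda$ are suitably normalized extremals. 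A Poincar\'e-type comparison on the compact factor $\S$, coupled with the Euler-Lagrange equation, yields $\rho[v_\Lambda]\le\rho[v^*_\Lambda]$ at any non-radial extremal; the ratio is therefore non-decreasing in $\Lambda$ and, since $\C{CKN}\ge\C{CKN}^*$ always, equality at $\Lambda_0=\Lambda(a_0)$ propagates to every $\Lambda<\Lambda_0$, i.e.\ symmetry holds on $(a_0,a_c)$. Continuity of $a^*$ then follows from the joint continuity of the optimal constants together with this strict monotonicity. Finally, $\lim_{p\to 2_+} a^*(\theta,p) = -\infty$ would be obtained by combining $\underline a(\theta,p)\to -\infty$ with a perturbation/compactness argument on $\mathcal C$: after a suitable rescaling of the unknown the minimization problem as $p\to 2_+$ is governed by the weighted logarithmic Hardy inequality \eqref{Ineq:WLH}, whose symmetry threshold $\tilde a(\gamma)$ from \eqref{1-8} tends to $-\infty$ as $\gamma\to\infty$.

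The main obstacle is the monotonicity step. For $\theta<1$ the factor $\|v\|_{\L^2(\mathcal C)}^{2(1-\theta)}$ introduces a Lagrange multiplier depending on the $\L^2$-mass of the extremal, so the clean $\theta=1$ scaling used in \cite{0902} is not directly available; tracking this dependence, and establishing the Poincar\'e-type comparison $\rho[v_\Lambda]\le\rho[v^*_\Lambda]$ at every non-radial critical point rather than only at global minimizers, is the technical core of the argument. A secondary subtlety is that extremals need not exist at the boundary $\theta=\vartheta(p,d)$ (cf.\ Theorem \ref{Thm:CKN}), so the monotonicity argument must be supplemented by an approximation from the interior $\theta>\vartheta(p,d)$ in order to reach the sharp threshold. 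Once these points are handled, parts (i) and (ii) follow directly from the two-sided characterization of $a^*$.
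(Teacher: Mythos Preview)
Your overall architecture is right (define the threshold, sandwich it between $\underline a$ and $\bar a$, prove the symmetry set is an interval in $\Lambda$, then continuity and the limit $p\to 2_+$), but the core step --- the ``interval'' or monotonicity property --- is based on a misconception, and the key inequality you need is left unproved.

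You assert that ``the clean $\theta=1$ scaling used in \cite{0902} is not directly available'' for $\theta<1$, and you therefore try to replace it by an envelope-theorem argument comparing the ratios $\rho[v_\Lambda]$ and $\rho[v^*_\Lambda]$ via a Poincar\'e-type estimate that you never establish. In fact the $s$-scaling trick transfers verbatim to $\theta<1$: for $w_\sigma(s,\omega):=w(\sigma s,\omega)$ one computes directly
\[
\mathcal F_{\theta,p,\sigma^2\Lambda}[w_\sigma]
= \sigma^{\,2-\frac{1}{\theta}+\frac{2}{p\theta}}\,\mathcal F_{\theta,p,\Lambda}[w]
- \sigma^{\,2-\frac{1}{\theta}+\frac{2}{p\theta}}\,(\sigma^2-1)\,
\frac{\|\nabla_\omega w\|_{\L^2(\mathcal C)}^2\,\|w\|_{\L^2(\mathcal C)}^{2(1-\theta)/\theta}}{\|w\|_{\L^p(\mathcal C)}^{2/\theta}}\, .
\]
The extra $\L^2$ factor only changes the power of $\sigma$ in front; the crucial feature --- that the correction term has the sign of $\sigma^2-1$ and vanishes exactly when $w$ is $s$-symmetric --- persists. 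From this identity one reads off immediately that (a) if $\C{CKN}=\C{CKN}^*$ at some $\Lambda$, equality propagates to every smaller $\lambda$, and (b) the existence of a non-$s$-symmetric extremal at $\Lambda$ forces $\C{CKN}>\C{CKN}^*$ for every larger $\lambda$. No envelope differentiation and no comparison of the quantities $\rho[\cdot]$ is needed; your proposed Poincar\'e comparison $\rho[v_\Lambda]\le\rho[v^*_\Lambda]$ is not obvious and you give no argument for it.

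Your treatment of $\lim_{p\to 2_+}a^*(\theta,p)=-\infty$ is also off-target. The bound $\underline a(\theta,p)\to -\infty$ is on the wrong side (it only says $a^*\ge\underline a$), and the appeal to \eqref{Ineq:WLH} is vague. The paper's argument is direct: one fixes $\Lambda$ and $\theta$ and shows, via Lemma~\ref{Lem:Poincare} together with the a priori bounds of Corollary~\ref{Cor:H1} and Lemma~\ref{Lem:DELT}, that every extremal is $s$-symmetric once $p$ is close enough to $2$. Hence $\Lambda^*(\theta,p)\to+\infty$ as $p\to 2_+$, which is the desired limit. A minor additional point: Theorem~\ref{Thm:Schwarz} requires $d\ge 3$, so for $d=2$ your appeal to it for the upper bound in (iii) is not available; the upper bound $a^*<a_c$ (equivalently $\Lambda^*>0$) actually comes from Corollary~\ref{Cor:SymCritical}, which covers $d\ge 2$.
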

Surprisingly, the symmetry in the regime $a\to a_c$ appears as a consequence of the asymptotic behavior of the extremals in \eqref{Ineq:CKN} for $\theta=1$ as $a\to-\infty$, which has been established in \cite{Catrina-Wang-01}. Symmetry holds as $p\to2_+$ for reasons which are similar to the ones found in \cite{0902}.

\medskip Concerning the weighted logarithmic Hardy inequality \eqref{Ineq:WLH}, we observe that it can be obtained as the limiting case of inequality~\eqref{Ineq:CKN} as $p\to2_+$, provided $\theta=\gamma\,(p-2)$. Actually, in this limit, the inequality degenerates into an equality, so that \eqref{Ineq:WLH} is obtained by differentiating both sides of the inequality with respect to $p$ at $p=2$. It is therefore remarkable that symmetry and symmetry breaking results can be extended to \eqref{Ineq:WLH}, which is a kind of first order correction to Hardy's inequality. Inequality \eqref{Ineq:WLH} has been established recently and so far no symmetry results were known for its extremals. Here is our first main result:
\begin{theorem}\label{Thm:Mainbis} Let $d\geq 2$, there exists a continuous function $a^{**}:(d/4,\infty)\to(-\infty,a_c)$ such that for any $\gamma>d/4$ and $a\in [a^{**}(\gamma), a_c)$, there is a radially symmetric extremal for \eqref{Ineq:WLH}, while for $a<a^{**}(\gamma)$ no extremal of \eqref{Ineq:WLH} is radially symmetric. Moreover, $a^{**}(\gamma)\ge\tilde a(\gamma)$ for any $\gamma\in(d/4,\infty)$.
\end{theorem}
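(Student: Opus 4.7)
The plan is to mirror the strategy used for Theorem~\ref{Thm:Main}, exploiting the fact that \eqref{Ineq:WLH} arises as a limiting case of \eqref{Ineq:CKN} when $p\to 2_+$ with the scaling $\theta=\gamma\,(p-2)$. First I would pass to the cylinder $\mathcal C=\R\times\S$ via the Emden--Fowler transformation of Section~\ref{Sec:Emden-Fowler}, so that radial symmetry on $\R^d$ becomes invariance under the rotations of~$\S$, and define
\[
a^{**}(\gamma):=\inf\bigl\{a<a_c\,:\,\C{WLH}(\gamma,\Lambda(a'))=\C{WLH}^*(\gamma,\Lambda(a'))\text{ for all }a'\in[a,a_c)\bigr\}.
\]

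The proof then proceeds in four steps. \emph{Step~1} (symmetry for $a$ close to $a_c$): using the existence of extremals for $\gamma>d/4$ granted by Theorem~\ref{Thm:CKN}(ii), together with a concentration argument on the cylinder, I would show that as $\Lambda(a)\to 0^+$ any sequence of extremals must, after a suitable normalization, concentrate to the explicit radial profile associated with $\C{WLH}^*$; this forces $\C{WLH}(\gamma,\Lambda(a))=\C{WLH}^*(\gamma,\Lambda(a))$ for $a$ sufficiently close to~$a_c$, hence $a^{**}(\gamma)<a_c$. \emph{Step~2} (monotonicity in $a$): show that if $\C{WLH}(\gamma,\Lambda(a_0))=\C{WLH}^*(\gamma,\Lambda(a_0))$ for some $a_0<a_c$, the equality persists for every $a\in[a_0,a_c)$; this can be obtained directly on the cylinder by comparing Rayleigh-type quotients (the relevant inequality tightens as $\Lambda$ decreases), or by taking the $p\to 2_+$ limit with $\theta=\gamma\,(p-2)$ of the analogous monotonicity underlying Theorem~\ref{Thm:Main}. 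Together with Step~1, this ensures that $a^{**}(\gamma)$ is the precise threshold between the symmetry and symmetry-breaking regions.

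\emph{Step~3} (lower bound): Proposition~\ref{Thm:WLH-SymmetryBreaking} gives symmetry breaking for every $a<\tilde a(\gamma)$, hence $a^{**}(\gamma)\ge\tilde a(\gamma)$; in particular $a^{**}(\gamma)>-\infty$. \emph{Step~4} (continuity): the joint continuity of $(\gamma,\Lambda)\mapsto\C{WLH}^*(\gamma,\Lambda)$ is manifest from its explicit formula, while the continuity of $(\gamma,\Lambda)\mapsto\C{WLH}(\gamma,\Lambda)$ follows from the existence statement in Theorem~\ref{Thm:CKN}(ii) combined with a standard compactness argument on $\mathcal C$. Coupled with Steps~1--2, this yields continuity of $\gamma\mapsto a^{**}(\gamma)$ as the boundary between two closed subsets of the $(\gamma,a)$-plane.

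The main obstacle will be Step~1. In the CKN setting (Theorem~\ref{Thm:Main}), the symmetry in the regime $a\to a_c^-$ is read off the asymptotic analysis of extremals performed in \cite{Catrina-Wang-01}, and there is no direct counterpart of that analysis for the logarithmic Hardy inequality. I would overcome this either by developing a uniform $p\to 2_+$ version of the argument of~\cite{Catrina-Wang-01} that survives differentiation in $p$, or by carrying out the concentration--compactness analysis on $\mathcal C$ from scratch, treating the logarithmic nonlinearity as a lower-order perturbation of a pure Hardy inequality and controlling its interaction with the spectral gap $d-1$ of the Laplacian on $\S$. Tracking this interaction uniformly as $a\to a_c^-$ is the delicate technical point.
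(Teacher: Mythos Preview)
Your four-step architecture matches the paper's, and Steps~3 and~4 are essentially right. The substantive differences are in Steps~1 and~2.

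For Step~2, the paper does not pass to the limit $p\to 2_+$ from the CKN monotonicity. It derives directly, for $w_\sigma(s,\omega):=w(\sigma s,\omega)$, the identity
\[
\mathcal G_{\gamma,\sigma^2\Lambda}[w_\sigma]=\sigma^{2-\frac1{2\gamma}}\,\mathcal G_{\gamma,\Lambda}[w]-\frac{(\sigma^2-1)\,\sigma^{-\frac1{2\gamma}}\,\nrmcnd{\nabla_\omega w}2^2}{\nrmcnd w2^2\,\exp\!\big\{\tfrac1{2\gamma}\icnd{\tfrac{w^2}{\nrmcnd w2^2}\log\tfrac{w^2}{\nrmcnd w2^2}}\big\}}\,,
\]
the exact analogue of~\eqref{scaling3}. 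From this the two monotonicity statements (symmetry propagates downward in $\Lambda$, symmetry breaking propagates upward) follow in one line each. Your ``Rayleigh quotients tighten as $\Lambda$ decreases'' is the right intuition, but the scaling in~$s$ is the concrete mechanism that makes it work.

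For Step~1, your concentration-to-the-radial-profile picture is misleading: as $\Lambda\to 0_+$ the normalized extremals on~$\mathcal C$ converge weakly to~$0$, not to a nontrivial symmetric limit. The paper's argument (Proposition~\ref{Prop:WLH-Symmetry}) is the logarithmic-Hardy analogue of Corollary~\ref{Cor:SymCritical}: first, a \emph{logarithmic Sobolev inequality on the cylinder} (obtained by differentiating H\"older's inequality at $q=2$ and combining with Gagliardo--Nirenberg) forces $\nrmcnd{\nabla w_n}2\to 0$, hence $\nrmcnd{w_n}\infty\to 0$ by elliptic regularity and~\eqref{t:3.7-3.8}. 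Then one differentiates the Euler--Lagrange equation~\eqref{4-a} in the azimuthal angle~$\phi$, multiplies by $D_\phi(\partial_\phi w_n)$, and uses the Poincar\'e inequality on~$\S$ with constant $d-1$. The resulting inequality is violated for large~$n$ unless $\partial_\phi w_n\equiv 0$. So your instinct that the spectral gap $d-1$ is the crux was correct; the missing ingredient is the log-Sobolev inequality on~$\mathcal C$, which replaces the estimates of Corollary~\ref{Cor:H1} and is what makes a direct argument possible without any $p\to 2_+$ limit or concentration--compactness machinery.
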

Theorems \ref{Thm:Main} and \ref{Thm:Mainbis} do not allow to decide whether $(\theta,p)\mapsto a^*(\theta,p)$ and $\gamma\mapsto a^{**}(\gamma)$ coincide with $(\theta,p)\mapsto\underline a(\theta,p)$ and $\gamma\mapsto\tilde a(\gamma)$ given by \eqref{functionabar} and \eqref{1-8} respectively. If the set of non-radial extremals bifurcates from the set of radial extremals, then $a^*=\underbar a$ in case of \eqref{Ineq:CKN} and $a^{**}=\tilde a$ in case of \eqref{Ineq:WLH}. Moreover, most of the known symmetry breaking results rely on linearization and the method developed in \cite{0902} for proving symmetry and applied in Theorems~\ref{Thm:Main} and \ref{Thm:Mainbis} also relies on linearization. It would therefore be tempting to conjecture that $a^*=\underbar a$ and $a^{**}=\tilde a$. It turns out that this is not the case. We are now going to establish a \emph{new symmetry breaking phenomenon,} outside the zone of instability of the radial extremal, i.e.~when $a>\underbar a$, for some values of $\theta<1$ for \eqref{Ineq:CKN} and for some $a>\tilde a(\gamma)$ in case of \eqref{Ineq:WLH}. These are striking results, as they clearly depart from previous methods.
\begin{theorem}\label{Cor:counterex1} Let $d\geq 2$. There exists $\eta>0$ such that for every $p\in (2, 2+\eta)$ there exists an $\varepsilon>0$ with the property that for $\theta\in[\vartheta(p,d),\vartheta(p,d)+\varepsilon)$ and $a\in[\underbar a(\theta,p),\underbar a(\theta,p)+\varepsilon)$, no extremal for \eqref{Ineq:CKN} corresponding to the parameters $(\theta,p,a)$ is radially symmetric.\end{theorem}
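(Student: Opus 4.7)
The plan is to leverage the link between \eqref{Ineq:CKN} and \eqref{Ineq:WLH} in the limit $p\to 2_+$ with $\theta=\gamma\,(p-2)$ (described in the paragraph preceding Proposition \ref{Thm:WLH-SymmetryBreaking}) in order to transfer a symmetry breaking fact about \eqref{Ineq:WLH} at the critical value $\gamma=d/4$ to the CKN problem. The parameter matching is favorable: from \eqref{functionabar}, a direct calculation with $\vartheta(p,d)=d(p-2)/(2p)$ gives
\[
\underline a(\vartheta(p,d),p)=a_c-\frac{2\,(d-1)}{p+2}\,,
\]
which tends to $-1/2=\tilde a(d/4)$ as $p\to 2_+$, so that the CKN parameters considered in the theorem cluster around the critical WLH point $(\gamma,a)=(d/4,-1/2)$.

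First I would move to the cylinder $\mathcal C=\R\times\S$ via the Emden-Fowler transformation of Section \ref{Sec:Emden-Fowler}, rewriting \eqref{Ineq:CKN} and \eqref{Ineq:WLH} as minimization problems for Rayleigh quotients without singular weights. A quantitative limit is then needed: with $\theta=\gamma\,(p-2)$ and $\Lambda$ fixed, and after the suitable normalization that in the limit $p\to 2_+$ turns the differentiation in $p$ at $p=2$ of both sides of \eqref{Ineq:CKN} into \eqref{Ineq:WLH}, both the non-radial and the radial CKN Rayleigh quotients converge to the corresponding WLH quantities. Consequently the non-radial deficit ratio $\C{CKN}(\theta,p,\Lambda)/\C{CKN}^*(\theta,p,\Lambda)$ converges to $\C{WLH}(d/4,\Lambda)/\C{WLH}^*(d/4,\Lambda)$. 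If this limiting ratio is strictly greater than $1$, the CKN ratio is also strictly greater than $1$ for $(p,\theta,a)$ in the regime of the theorem, and since existence of CKN extremals in this neighborhood follows from Theorem \ref{Thm:CKN}(i), no such extremal can be radial.

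It remains to establish the strict inequality $\C{WLH}^*(d/4,(d-1)^2/4)<\C{WLH}(d/4,(d-1)^2/4)$. This is precisely the boundary case of Proposition \ref{Thm:WLH-SymmetryBreaking}, at which no negative eigenvalue is available, so a non-perturbative argument is required. I would construct a non-radial competitor by perturbing the radial WLH extremal on $\mathcal C$ along a first-degree spherical harmonic $Y_1(\omega)$ multiplied by a suitable profile in $s$, and show by an explicit expansion in the perturbation amplitude, made tractable by the Gaussian-like form of the radial extremal at $\gamma=d/4$ and the Ornstein-Uhlenbeck structure of the resulting quadratic form, that the WLH quotient can be made strictly smaller than its radial optimum.

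The main obstacle is precisely this nonlinear computation at a point where the usual linearized test vanishes: one must exploit the explicit profile of the radial extremal and the Hermite/Gaussian integrals arising on $\mathcal C$ to pin down the sign of the next-order contribution. A secondary difficulty is to make the convergence of the Rayleigh quotients in the first step uniform enough in $(\theta,p,a)$ to transfer the resulting strict inequality back to CKN in a full neighborhood of $(p,\theta,a)=(2,\vartheta(p,d),-1/2)$, which in the Emden-Fowler formulation requires compactness control on any sequence of almost-extremals that asymptotically concentrate in the cylinder.
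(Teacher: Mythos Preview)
The paper's proof follows a substantially different route. It does not pass to the WLH limit at all, nor does it perturb the radial extremal. Instead it uses Lemma~\ref{Lem:comparison}: translating a Gagliardo--Nirenberg extremal to infinity gives $\C{GN}(p)\le\C{CKN}(\vartheta(p,d),p,\Lambda)$ for every $\Lambda>0$. One then checks, by plugging a Gaussian into the Gagliardo--Nirenberg quotient and comparing with the explicit value of $\C{CKN}^*(\vartheta(p,d),p,\Lambda(a_-(p)))$, that the resulting ratio $\mathsf L(p,d)$ equals $1$ at $p=2$ with strictly negative $p$-derivative. Hence $\C{CKN}^*<\C{GN}\le\C{CKN}$ directly at $\theta=\vartheta(p,d)$, $a=a_-(p)$ for $p$ slightly above $2$, and continuity together with Theorem~\ref{Thm:Main'}(ii) yields the full neighborhood. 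The competitor is \emph{global} (a function escaping the weight), not a local perturbation.

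Your plan has a genuine gap at precisely the step you flag as the main obstacle. You propose to prove $\C{WLH}^*(d/4,\tilde\Lambda(d/4))<\C{WLH}(d/4,\tilde\Lambda(d/4))$ by expanding $\mathcal G_{\gamma,\Lambda}$ around the radial extremal along $Y_1(\omega)\,\varphi(s)$. But $\tilde\Lambda(\gamma)$ is by definition the threshold at which the second variation in that direction is nonnegative with infimum zero; the odd parity of $Y_1$ kills all odd-order terms in the expansion, so you are forced to a quartic computation with no mechanism offered for why its sign should be favorable. This is exactly the regime the theorem is about---symmetry breaking \emph{beyond} linear instability---and your approach reduces to a local expansion whose outcome is undetermined. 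The paper sidesteps this completely: the strict inequality comes from comparing two explicit constants ($\C{CKN}^*$ against $\C{GN}$, or in the WLH analogue $\C{WLH}^*$ against $\C{LS}$), not from any sign produced by a perturbation series. Your secondary difficulty, the uniform convergence of the CKN ratio to the WLH ratio, is also nontrivial and not needed in the paper's argument.
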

Notice that there is always an extremal function for \eqref{Ineq:CKN} if $\theta>\vartheta(p,d)$, and also in some cases if $\theta=\vartheta(p,d)$. See \cite{DE2010} for details. The plots in Fig. 2 provide a value for $\eta$.

\medskip We have a similar statement for logarithmic Hardy inequalities, which is our third main result. Let
\be{Eqn:LambdaSB}
\Lambda_{\rm SB}(\gamma,d):=\frac 18\,(4\,\gamma-1)\,e\,\big(\tfrac{\pi^{4\,\gamma-d-1}}{16}\big)^\frac 1{4\,\gamma-1}\,\big(\tfrac d\gamma\big)^\frac{4\,\gamma}{4\,\gamma-1}\,\Gamma\(\tfrac d2\)^\frac 2{4\,\gamma-1}\,.
\ee
\begin{theorem}\label{Cor:counterex2} Let $d \geq 2$ and assume that $\gamma>1/2$ if $d=2$. If $\Lambda(a)>\Lambda_{\rm SB}(\gamma,d)$, then there is symmetry breaking: no extremal for \eqref{Ineq:WLH} corresponding to the parameters $(\gamma, a)$ is radially symmetric. As a consequence, there exists an $\varepsilon>0$ such that, if $a\in[\tilde a(\gamma),\tilde a(\gamma)+ \varepsilon)$ and $\gamma\in[d/4,d/4+\varepsilon)$, with $\gamma>1/2$ if $d=2$, there is symmetry breaking.\end{theorem}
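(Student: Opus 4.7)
The plan is to establish the explicit symmetry-breaking criterion $\Lambda(a) > \Lambda_{\rm SB}(\gamma,d)$ by direct comparison with a non-radial test function, and then to deduce the asymptotic statement by a continuity argument.

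First I would apply the Emden-Fowler change of variables (Section~\ref{Sec:Emden-Fowler}) to rewrite \eqref{Ineq:WLH} as an equivalent logarithmic functional inequality on the cylinder $\mathcal C = \R \times \S$ under the normalisation $\|v\|_{\L^2(\mathcal C)} = 1$. Radial functions on $\R^d$ correspond to functions on $\mathcal C$ independent of the angular variable $\omega \in \S$, and among these the best constant is exactly $\C{WLH}^*(\gamma,\Lambda)$, given in closed form before the theorem.

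Next I would exhibit a non-radial competitor, in the spirit of the Catrina-Wang bubbles used for \eqref{Ineq:CKN} with $\theta=1$ in \cite{Catrina-Wang-01}. Fixing $\omega_0 \in \S$ and a small $\sigma > 0$, a natural candidate is the rescaled Gaussian
\[
v_\sigma(s,\omega) = c_\sigma\,\chi(\omega)\,\exp\bigl(-(s^2 + |\omega-\omega_0|^2)/(2\sigma^2)\bigr),
\]
with $\chi$ a smooth cut-off supported near $\omega_0$ and $c_\sigma$ enforcing $\|v_\sigma\|_{\L^2(\mathcal C)} = 1$; these functions genuinely depend on $\omega$ and are therefore non-radial on $\R^d$. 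As $\sigma \to 0$, the cylinder metric flattens near $\omega_0$ and the WLH functional evaluated on $v_\sigma$ is governed, to leading order, by the sharp Gaussian constant in the Euclidean logarithmic Sobolev inequality on $\R^d$ (which supplies the kinetic and entropic parts), while the $\Lambda$-term contributes the explicit linear piece $\Lambda\int|v_\sigma|^2\,dy = \Lambda$. Optimising the resulting upper bound for $\C{WLH}(\gamma,\Lambda)$ over $\sigma$ and comparing it with the closed form of $\C{WLH}^*(\gamma,\Lambda)$ then produces a strict inequality $\C{WLH}^*(\gamma,\Lambda) < \C{WLH}(\gamma,\Lambda)$ precisely when $\Lambda > \Lambda_{\rm SB}(\gamma,d)$.

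The main obstacle is the sharp bookkeeping in this expansion, in order to recover the exact prefactors appearing in \eqref{Eqn:LambdaSB}, namely the Gaussian moments, the power of $\Gamma(d/2)$ produced by the volume element on $\S$, and the exponent $(4\gamma-1)/(4\gamma)$ in the radial constant. Once this is done, the asymptotic consequence reduces to the elementary inequality $\Lambda_{\rm SB}(d/4,d) < \Lambda(\tilde a(d/4)) = (d-1)^2/4$, since by \eqref{1-8} one has $\Lambda(\tilde a(\gamma)) = \tfrac14(d-1)(4\gamma-1)$, and at $\gamma = d/4$ the formula \eqref{Eqn:LambdaSB} simplifies dramatically (the exponent $4\gamma - d - 1$ equals $-1$ and $d/\gamma = 4$) to an algebraic inequality in $d$, $e$, $\pi$ and $\Gamma(d/2)$ that can be checked directly for each $d \ge 2$. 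Continuity of $\gamma \mapsto \Lambda_{\rm SB}(\gamma,d)$, of $\gamma \mapsto \Lambda(\tilde a(\gamma))$, and of $a \mapsto (a-a_c)^2$ then extends the strict inequality $\Lambda(a) > \Lambda_{\rm SB}(\gamma,d)$ to a full neighbourhood $(\gamma,a) \in [d/4,d/4+\varepsilon) \times [\tilde a(\gamma),\tilde a(\gamma) + \varepsilon)$ (with $\gamma > 1/2$ enforced when $d = 2$), and in this neighbourhood the first part of the theorem delivers the announced symmetry breaking.
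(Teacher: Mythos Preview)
Your approach is morally the same as the paper's---your concentrating Gaussians on the cylinder are precisely the Emden--Fowler image of the paper's translated Gaussians on $\R^d$---but the paper's execution is far more economical. The paper stays on $\R^d$: by Lemma~\ref{Lem:comparison}, testing the weighted functional with the translates $u_n(x)=\mathsf g(x+n\,\mathsf e)$ of the Gaussian extremal for the Euclidean logarithmic Sobolev inequality yields $\C{LS}\le\C{WLH}(d/4,\Lambda)$ directly, with no cutoff and no asymptotic expansion. Since both $\C{LS}=2/(\pi\,d\,e)$ and $\C{WLH}^*(\gamma,\Lambda)$ are already given in closed form, the condition $\C{WLH}^*(\gamma,\Lambda)<\C{LS}$ is a purely algebraic inequality between known quantities; solving it for $\Lambda$ gives \emph{exactly} $\Lambda>\Lambda_{\rm SB}(\gamma,d)$, so the ``sharp bookkeeping'' you flag as the main obstacle never arises. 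The second assertion then follows by verifying the single explicit inequality $\C{WLH}^*(d/4,\Lambda(-1/2))<\C{LS}$ (and its $d=2$ limit) and invoking continuity, exactly as in your last paragraph.

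One genuine issue with your formulation: your claim that the limit $\sigma\to 0$ is governed by the Euclidean logarithmic Sobolev constant is only correct at the critical value $\gamma=d/4$. For $\gamma>d/4$ one has $\mathcal G_{\gamma,\Lambda}[v_\sigma]\sim\sigma^{d/(2\gamma)-2}\to\infty$ as $\sigma\to 0$, so pure concentration gives no information. The optimisation over $\sigma$ you propose would then land at a finite $\sigma_*^2\sim(4\gamma-d)/\Lambda$, for which the flat-space approximation on $\mathcal C$ is only legitimate when $\Lambda$ is large, and the resulting threshold would carry curvature corrections and need not reproduce $\Lambda_{\rm SB}(\gamma,d)$ in closed form. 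The paper sidesteps this entirely by comparing the two explicit constants rather than passing through an asymptotic limit.
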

This result improves the one of Proposition~\ref{Thm:WLH-SymmetryBreaking}, at least for $\gamma$ in a neighborhood of $(d/4)_+$. Actually, the range of $\gamma$ for which $\Lambda_{\rm SB}(\gamma,d)<\tilde\Lambda(\gamma)$ can be deduced from our estimates, although explicit expressions are hard to read. See Fig. 4 and further comments at the end of Section~\ref{Sec:Symmetry-breaking-examples}.

\medskip This paper is organized as follows. Section~\ref{Sec:prelim} is devoted to preliminaries (Emden-Fowler transform, symmetry breaking results based on the linear instability of radial extremals) and to the proof of Theorem~\ref{Thm:Schwarz} using Schwarz' symmetrisation. Sections \ref{Sec:Symmetry} and \ref{Sec:SymmetrylogHardy} are devoted to the proofs of Theorems~\ref{Thm:Main} and~\ref{Thm:Mainbis} respectively. Theorems~\ref{Cor:counterex1} and~\ref{Cor:counterex2} are established in Section~\ref{Sec:Symmetry-breaking-examples}.

\section{Preliminaries}\label{Sec:prelim}

\subsection{The Emden-Fowler transformation}\label{Sec:Emden-Fowler}

Consider the Emden-Fowler transformation
\begin{multline*}\label{Emden-Fowler}
u(x)=|x|^{-(d-2-2a)/2}\,w(y)\quad\mbox{where}\quad y=(s,\omega)\in\R\times\S=:\mathcal C\,,\\
x\in\R^d,\quad s=-\log|x|\in\R\quad\mbox{and}\quad\omega=x/|x|\in\S\,.
\end{multline*}
The scaling invariance in $\R^d$ becomes a translation invariance in the cylinder $\mathcal C$, in the $s$-direction, and radial symmetry for a function in $\R^d$ becomes dependence on the $s$-variable only. Also, invariance under a certain Kelvin transformation in $\R^d$ corresponds to the symmetry $s\mapsto -s$ in $\mathcal C$. More precisely, a radially symmetric function $u$ on $\R^d$, invariant under the Kelvin transformation $u(x)\mapsto |x|^{2\,(a-a_c)}\,u(x/|x|^2)$, corresponds to a function $w$ on $\mathcal C$ which depends only on $s$ and satisfies $w(-s)=w(s)$. We shall call such a function a \emph{$s$-symmetric} function.

Under this transformation, \eqref{Ineq:CKN} can be stated just as an interpolation inequality in $\H^1(\mathcal C)$. Namely, for any $w\in\H^1(\mathcal C)$,
\be{Ineq:Gen_interp_Cylinder}
\nrmcnd wp^2\leq \C{CKN}(\theta,p,\Lambda)\(\nrmcnd{\nabla w}2^2+\Lambda\,\nrmcnd w2^2\)^\theta\nrmcnd w2^{2\,(1-\theta)}
\ee
with $\Lambda=\Lambda(a)=(a_c-a)^2$. With these notations, recall that
\[
\Lambda=0 \Longleftrightarrow a=a_c \quad \mbox{and}\quad \Lambda>0 \Longleftrightarrow a<a_c\;.
\]
At this point it becomes clear that $a<a_c$ or $a>a_c$ plays no role and only the value of $\Lambda>0$ matters. Similarly, by the Emden-Fowler transformation, \eqref{Ineq:WLH} becomes
\be{Ineq:GLogHardy-w}
\icnd{|w|^2\,\log |w|^2 }\leq 2\,\gamma\,\log\left[\C{WLH}(\gamma, \Lambda)\,\left(\nrmcnd{\nabla w}2^2+\Lambda\right)\right]\,,
\ee
for any $w \in\H^1(\mathcal C)$ normalized by $\nrmcnd w2^2= 1$, for any $d\ge 1$, $a <a_c$, $\gamma \ge d/4$, and $\gamma>1/2$ if $d=2$.

\subsection{Linear instability of radial extremals}\label{Sec:LinearInstability}

Symmetry breaking for extremals of \eqref{Ineq:GLogHardy-w} has been discussed in detail for $\theta =1$ in \cite{Catrina-Wang-01,Felli-Schneider-03}, and by the same methods in \cite{DDFT}, where symmetry breaking has been established also when $\theta\in (0, 1)$. The method goes as follows. Consider an extremal~$w^*$ for~\eqref{Ineq:Gen_interp_Cylinder} among $s$-symmetric functions. It realizes a minimum for the functional
\be{F}
\mathcal F_{\theta,p,\Lambda}[w]:=\frac{\(\nrmcnd{\nabla w}2^2+\Lambda\,\nrmcnd w2^2\)\nrmcnd w2^{2\,\frac{1-\theta}\theta}}{\nrmcnd wp^{2/\theta}}
\ee
among functions depending only on $s$ and $\mathcal F_{\theta,p,\Lambda}[w^*]=\C{CKN}^*(\theta,p,\Lambda)^{-1/\theta}$. Once the maximum of $w^*$ is fixed at $s=0$, since $w^*$ solves an autonomous ordinary differential equation, by uniqueness, it automatically satisfies the symmetry $w^*(-s)=w^*(s)$ for any $s\in\R$. Next, one linearizes $\mathcal F_{\theta,p,\Lambda}$ around $w^*$. This gives rise to a linear operator, whose kernel is generated by $dw^*/ds$ and which admits a negative eigenvalue in $H^1(\mathcal{C})$ if and only if $a<\underbar a(\theta,p)$, that is for
\[
\Lambda>\underline\Lambda(\theta,p):=(a_c-\underbar a(\theta,p))^2\,,
\]
where the function $\underbar a(\theta,p)$ is defined in \eqref{functionabar}. Hence, if $a<\underbar a(\theta,p)$, it is clear that $\mathcal F_{\theta,p,\Lambda}-\mathcal F_{\theta,p,\Lambda}[w^*]$ takes negative values in a neighbourhood of $w^*$ in $H^1(\mathcal{C})$ and extremals for~\eqref{Ineq:Gen_interp_Cylinder} cannot be $s$-symmetric, even up to translations in the $s$-direction. By the Emden-Fowler transformation, extremals for~\eqref{Ineq:CKN} cannot be radially symmetric.

\begin{remark} Theorem~\ref{Cor:counterex1} asserts that there are cases where $a>\underbar a(\theta,p)$, so that the extremal $s$-symmetric function $w^*$ is stable in $H^1(\mathcal{C})$, but for which symmetry is broken, in the sense that we prove $\C{CKN}^*(\theta,p,\Lambda)<\C{CKN}(\theta,p,\Lambda)$. This will be studied in Section~\ref{Sec:Symmetry-breaking-examples}.
\end{remark}

In the case of the weighted logarithmic Hardy inequality, symmetry breaking can be investigated as in \cite{DDFT} by studying the linearization of the functional
\be{G}
\mathcal G_{\gamma,\Lambda}[w]:=\frac{ \nrmcnd{\nabla w}2^2+ \Lambda\,\nrmcnd w2^2}{\nrmcnd w2^2\,\exp\left\{\frac1{2\,\gamma}\,\icnd{ \frac{w^2}{\nrmcnd w2^2}\,\log\left( \frac{w^2}{\nrmcnd w2^2} \right)}\right\}}\;.
\ee
around an $s$-symmetric extremal $w^*$. In this way one finds that extremals for inequality~\eqref{Ineq:GLogHardy-w} are not $s$-symmetric whenever $d\ge 2$,
\[
\Lambda>\tilde\Lambda(\gamma):=\frac 14\,(d-1)(4\,\gamma-1)=\Lambda(\tilde a(\gamma))\;.
\]

\subsection{Proof of Theorem~\ref{Thm:Schwarz}}\label{Sec:Symmetrization}

As in \cite{0902}, we shall prove Theorem~\ref{Thm:Schwarz} by Schwarz' symmetrization after rephrasing \eqref{Ineq:CKN} as follows. To $u\in\mathcal D^{1,2}_{a}(\R^d)$, we may associate the function $v\in\mathcal D^{1,2}_{0}(\R^d)$ by setting:
\[
u(x)=|x|^a\,v(x)\quad\forall\;x\in\R^d\,.
\]
Inequality \eqref{Ineq:CKN} is then equivalent to
\[\label{2-a}
\nrm{|x|^{a-b}\,v}p^2\le\C{CKN}(\theta,p,\Lambda)\(\mathcal A-\lambda\,\mathcal B\)^\theta\,\mathcal B^{1-\theta}
\]
with $\mathcal A:=\nrm{\nabla v}2^2$, $\mathcal B:=\nrm{|x|^{-1}\,v}2^2$ and $\lambda:=a\,(2\,a_c-a)$. We observe that the function $B\mapsto h(\mathcal B):=\(\mathcal A-\lambda\,\mathcal B\)^\theta\,\mathcal B^{1-\theta}$ satisfies
\[
\frac{h'(\mathcal B)}{h(\mathcal B)}=\frac{1-\theta}{\mathcal B}-\frac{\lambda\,\theta}{\mathcal A-\lambda\,\mathcal B}\;.
\]
By Hardy's inequality, we know that
\[
\mathcal A-\lambda\,\mathcal B\ge\inf_{a>0}\big(\mathcal A-a\,(2\,a_c-a)\,\mathcal B\big)=\mathcal A-a_c^2\,\mathcal B>0
\]
for any $v\in\mathcal D^{1,2}_{0}(\R^d)\setminus\{0\}$. As a consequence, $h'(\mathcal B)\le 0$ if
\be{Schwarz:CS}
(1-\theta)\,\mathcal A<\lambda\,\mathcal B\;.
\ee
If this is the case, Schwarz' symmetrization applied to $v$ decreases $\mathcal A$, increases $\mathcal B$, and therefore decreases $\(\mathcal A-\lambda\,\mathcal B\)^\theta\,\mathcal B^{1-\theta}$, while it increases $\nrm{|x|^{a-b}\,v}p^2$. Optimality in \eqref{Ineq:CKN} is then reached among radial functions. Notice that $\lambda>0$ is required by our method and hence only the case $a_c>0$, i.e.~$d\ge 3$, is covered.

\medskip Let
\[
t:=\frac{\mathcal A}{\mathcal B}-a_c^2\;.
\]
Condition \eqref{Schwarz:CS} amounts to
\be{Estim:t1}
t\le\frac{\theta\,a_c^2-(a_c-a)^2}{1-\theta}\;.
\ee
If $u$ is a minimizer for \eqref{Ineq:CKN}, it has been established in \cite[Lemma 3.4]{DE2010} that
\be{Estim:t2}
(t+\Lambda)^\theta\le \frac{(\C{CKN}(1,2^*,a_c^2))^{\vartheta(d,p)}}{\C{CKN}^*(\theta,p,1)}\,(a_c-a)^{2\,\theta-\frac 2d\,\vartheta(p,d)}\,\(t\!+a_c^2\)^{\vartheta(d,p)}\;.
\ee
For completeness, we shall briefly sketch the proof of \eqref{Estim:t2} below in Remark~\ref{EstmDE}. The two conditions \eqref{Estim:t1} and \eqref{Estim:t2} determine two upper bounds for $t$, which are respectively monotone decreasing and monotone increasing in terms of $a$. As a consequence, they are simultaneously satisfied if and only if $a\in[a_0,a_c)$, where $a_0$ is determined by the equality case in \eqref{Estim:t1} and \eqref{Estim:t2}. See Fig.~1. This completes the proof of Theorem~\ref{Thm:Schwarz}.\qed

\begin{figure}[!ht]\begin{center}\includegraphics[width=10cm]{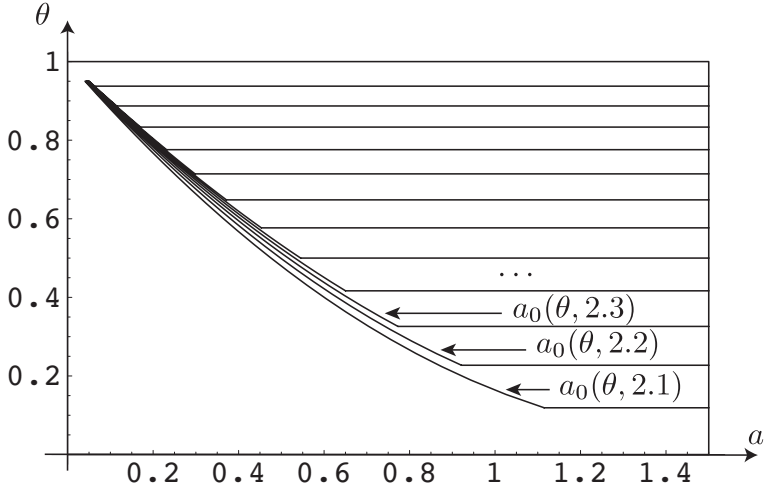}\caption{\small According to the proof of Theorem~\ref{Thm:Schwarz}, symmetry holds if $a\in[a_0(\theta,p),a_c)$, $\theta\in(\vartheta(p,d),1)$. The curves $\theta\mapsto a_0(\theta,p)$ are parametrized by $\theta\in[\vartheta(p,d),1)$, with $d=5$, $a_c=1.5$ and $p=2.1$, $2.2$, \ldots $3.2$. Horizontal segments correspond to $\theta=\vartheta(p,d)$, $a_0(\theta,p)\le a<a_c$.}\end{center}\end{figure}

\begin{remark} Although this is not needed for the proof of Theorem~\ref{Thm:Schwarz}, to understand why symmetry can be expected as $a\to a_c$, it is enlightening to consider the moving planes method. With the above notations, if $u$ is an extremal for \eqref{Ineq:CKN}, then $v$ is a solution of the Euler-Lagrange equation
\[
-\frac \theta{\mathcal A-\lambda\,\mathcal B}\,\Delta v+\(\frac{1-\theta}{\mathcal B}-\frac{\theta\,\lambda}{\mathcal A-\lambda\,\mathcal B}\)\frac v{|x|^2}=\frac{v^{p-1}}{\nrm{|x|^{-(b-a)}\,v}p^p}\;.
\]
If $d=2$, then $\lambda=-a^2<0$ and $\frac{1-\theta}{\mathcal B}-\frac{\theta\,\lambda}{\mathcal A-\lambda\,\mathcal B}$ is always positive. If $d\geq 3$, $\frac{1-\theta}{\mathcal B}-\frac{\theta\,\lambda}{\mathcal A-\lambda\,\mathcal B}$ is negative if and only if \eqref{Schwarz:CS} holds. Assume that this is the case. Using the Emden-Fowler transformation defined in Section~\ref{Sec:Emden-Fowler}, we know that the corresponding solution on the cylinder is smooth, so that $v$ has no singularity except maybe at $x=0$. We can then use the moving planes technique and prove that $v$ is radially symmetric by adapting the results of \cite{MR634248,MR1362756}.

Using Hardy's inequality, $(d-2)^2\,\mathcal B\le4\,\mathcal A$, also notice that \eqref{Schwarz:CS} cannot hold unless
\[
\theta>\frac{(d-2-2a)^2}{(d-2)^2}=\frac{(a_c-a)^2}{a_c^2}\;.
\]
This imposes that $a \to a_c$ as $\theta\to 0_+$. Compared to~\eqref{Schwarz:CS}, a numerical investigation (see Fig.~1) shows that this last condition is qualitatively correct.\end{remark}

\section{Radial symmetry for the Caffarelli-Kohn-Nirenberg inequalities}\label{Sec:Symmetry}

In this section, we shall first establish some a priori estimates which will allow us to adapt the method of \cite{0902} to the case of inequality \eqref{Ineq:CKN}.

\subsection{A priori estimates} \label{Sec:Preliminaries}

Recall that if $u$ and $w$ are related via the Emden-Fowler transformation, $u$ is radially symmetric if and only if $w$ is independent of the angular variables. The following result is taken from \cite[Theorem 1.2, (i), p. 231]{Catrina-Wang-01}, where $\theta=1$. Here we are interested in the regime corresponding to $a\to -\infty$.
\begin{lemma}\label{Lem:CatrinaWang} Let $d\ge 1$ and $p\in(2,2^*)$. For any $t>0$, there exists a constant $c(d,p,t)$ such that
\[
\frac 1{c(d,p,t)}\,\nrmcnd wp^2\leq\nrmcnd{\nabla w}2^2+t\nrmcnd w2^2\quad\forall\;w\in\H^1(\mathcal C)
\]
and
\[
\lim_{t\to\infty}t^{\frac dp-a_c}\,c(d,p,t)=\sup_{u\in\H^1(\R^d)\setminus\{0\}}\frac {\nrm up^2}{\nrm{\nabla u}2^2+\nrm u2^2}=:\mathsf S_p(\R^d)\;.
\]
\end{lemma}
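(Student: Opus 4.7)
My plan is to establish the finite constant by a standard Sobolev embedding on the cylinder, and to pin down the asymptotic prefactor through a concentration--transplantation argument in the spirit of \cite{Catrina-Wang-01}.

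\emph{Existence of $c(d,p,t)$.} Since $\mathcal C=\R\times \S$ is a smooth $d$-dimensional Riemannian manifold with bounded geometry, the embedding $\H^1(\mathcal C)\hookrightarrow \L^p(\mathcal C)$ holds for every $p$ in the range of the lemma (for $d\ge 3$, $p\in[2,2^*]$; for $d\le 2$, every $p\in[2,\infty)$). The usual interpolation inequality, applied with weight $t$ on the $\L^2$ norm, then gives a finite constant $c(d,p,t)>0$ for every $t>0$. Monotone dependence of the denominator on $t$ for fixed $w$ makes $t\mapsto c(d,p,t)$ continuous and nonincreasing.

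\emph{Lower bound on the limit.} I would transplant a nearly optimal function from $\R^d$ to $\mathcal C$. Fix $\epsilon>0$ and pick $u\in C^\infty_c(\R^d)$ supported in a ball $B_R\subset\R^d$ such that $\nrm{u}{p}^2/(\nrm{\nabla u}{2}^2+\nrm{u}{2}^2)\ge \mathsf S_p(\R^d)-\epsilon$. Using geodesic normal coordinates $y\in\R^{d-1}$ on $\S$ around a point $\omega_0$, identify via $(s,y)\mapsto (s,\omega(y))$ a neighborhood of $(0,\omega_0)$ in $\mathcal C$ with a subset of $\R^d$ whose volume form equals the Euclidean one up to a factor $1+O(|y|^2)$. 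For $t$ large enough, set $w_t(s,\omega(y)):=u(\sqrt{t}\,s,\sqrt{t}\,y)$ (and zero outside). A direct change of variables yields
\[
\nrmcnd{w_t}{p}^p=(1+o(1))\,t^{-d/2}\,\nrm{u}{p}^p\,,\quad \nrmcnd{w_t}{2}^2=(1+o(1))\,t^{-d/2}\,\nrm{u}{2}^2\,,
\]
and $\nrmcnd{\nabla w_t}{2}^2=(1+o(1))\,t^{1-d/2}\,\nrm{\nabla u}{2}^2$. Using $a_c-d/p=(d-2)/2-d/p$, the cylinder quotient then satisfies
\[
\frac{\nrmcnd{w_t}{p}^2}{\nrmcnd{\nabla w_t}{2}^2+t\,\nrmcnd{w_t}{2}^2} = (1+o(1))\,t^{a_c-d/p}\,\frac{\nrm{u}{p}^2}{\nrm{\nabla u}{2}^2+\nrm{u}{2}^2}\,.
\]
Taking supremum over $w_t$, then $t\to\infty$, and finally $\epsilon\to 0$ yields $\liminf_{t\to\infty} t^{d/p-a_c}c(d,p,t)\ge \mathsf S_p(\R^d)$.

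\emph{Matching upper bound, and the main obstacle.} For the converse, the strategy is to show that a maximizer $w_t$ of the cylinder quotient exists (up to $s$-translation) and must concentrate at a single point on the scale $1/\sqrt{t}$ as $t\to\infty$. Existence follows from subcriticality $p<2^*$ and invariance under translation in $s$, which lets us fix the $\L^\infty$-maximum of $w_t$ at $s=0$ and recover compactness of maximizing sequences. The $\L^p$-preserving rescaling $v_t(x):=t^{-d/(2p)}w_t(x/\sqrt{t})$ read in local normal coordinates around the concentration point, together with uniform elliptic estimates on $w_t$ coming from the Euler--Lagrange equation, should produce a subsequence converging in $\H^1(\R^d)$ to an extremal of the Euclidean Sobolev quotient $\mathsf S_p(\R^d)$. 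Tracking the scaling factors then gives $\limsup_{t\to\infty} t^{d/p-a_c}c(d,p,t)\le \mathsf S_p(\R^d)$. The hard part is precisely this concentration step: one must rule out both escape to infinity in $s$ (handled by the translation symmetry, which allows repositioning) and splitting into several separated bumps (handled by a pointwise upper bound on $w_t$ combined with a Lions-type dichotomy applied to the rescaled sequence). This is the argument carried out in detail in \cite[Theorem~1.2, (i)]{Catrina-Wang-01} for $\theta=1$, and its adaptation to the present setting is essentially verbatim since the scaling and the sub-critical compactness used there do not depend on $\theta$.
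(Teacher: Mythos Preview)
The paper does not supply its own proof of this lemma: it is simply quoted from \cite[Theorem~1.2,~(i), p.~231]{Catrina-Wang-01}. Your outline is a correct sketch of precisely that argument---the transplantation of a near-extremal $u\in C^\infty_c(\R^d)$ via normal coordinates for the lower bound, and concentration--compactness on the rescaled maximizers for the upper bound---and you correctly defer to the same reference for the delicate step. One minor remark: your last sentence about adapting the argument because it ``does not depend on $\theta$'' is superfluous, since the inequality in the lemma \emph{is} the case $\theta=1$ of~\eqref{Ineq:Gen_interp_Cylinder} with $\Lambda=t$ (i.e.\ $c(d,p,t)=\C{CKN}(1,p,t)$), so there is nothing to adapt.
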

In other words, as $t\to+\infty$, we have
\[
t^{\frac dp-a_c}\,\nrmcnd wp^2\leq \mathsf S_p(\R^d)\,(1+o(1))\(\nrmcnd{\nabla w}2^2+t\nrmcnd w2^2\)
\]
for any given $p\in(2,2^*)$.
\begin{remark}\label{Rem:Scaling} $\mathsf S_p(\R^d)$ is the best constant in the Gagliardo-Nirenberg inequality
\[\label{Ineq:GN-NonHom}
\nrm up^2\leq \mathsf S_p(\R^d)\,\(\nrm{\nabla u}2^2+\nrm u2^2\)
\]
and $t^{\frac dp-a_c}$ is the factor which appears by the scaling $u\mapsto t^{-(d-2)/4}\,u(\cdot/\sqrt t)$, that is
\[
t^{\frac dp-a_c}\,\nrm up^2\leq \mathsf S_p(\R^d)\,\(\nrm{\nabla u}2^2+t\,\nrm u2^2\)
\]
for all $t>0$. This is natural in view of the analysis done in \cite{Catrina-Wang-01}. We also observe that $\lim_{p\to2}\mathsf S_p(\R^d)=1$.\end{remark}
{}From Lemma~\ref{Lem:CatrinaWang}, we can actually deduce that the asymptotic behavior of $c(d,p,t)$ as $t\to\infty$ is uniform in the limit $p\to 2$.
\begin{corollary}\label{Cor:CatrinaWangUniform} Let $d\ge 1$ and $q\in(2,2^*)$. For any $p\in[2,q]$,
\[\label{Estim:CWUnif}
c(d,p,t)\le t^{-\zeta}\left[c(d,q,t)\right]^{1-\zeta}\quad\forall\;t>0
\]
with $\zeta=\frac{2\,(q-p)}{p\,(q-2)}$. As a consequence,
\[\label{Estim:CWLimitUnif}
\lim_{t\to\infty}\,\sup_{p\in[2,q]}t^{\frac dp-a_c}\,c(d,p,t)\le\left[\mathsf S_q(\R^d)\right]^\frac{q\,(p-2)}{p\,(q-2)}\,.
\]
\end{corollary}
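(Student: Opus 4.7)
The plan is to prove the pointwise bound on $c(d,p,t)$ by a single application of a Hölder interpolation between $\L^2(\mathcal C)$ and $\L^q(\mathcal C)$, and then to convert it to an asymptotic statement by combining with Lemma~\ref{Lem:CatrinaWang}.

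First, I would observe that the exponent $\zeta=2(q-p)/[p(q-2)]$ is exactly the interpolation exponent solving
\[
\frac 1p=\frac\zeta 2+\frac{1-\zeta}q\,,
\]
so that the standard Hölder (log-convexity of $\L^r$ norms) inequality yields, for every $w\in\H^1(\mathcal C)$,
\[
\nrmcnd wp^2\le\nrmcnd w2^{2\zeta}\,\nrmcnd wq^{2(1-\zeta)}\,.
\]
Next I would use two trivial bounds on the right-hand side. On one hand, $t\,\nrmcnd w2^2\le\nrmcnd{\nabla w}2^2+t\,\nrmcnd w2^2$ gives $\nrmcnd w2^2\le t^{-1}(\nrmcnd{\nabla w}2^2+t\,\nrmcnd w2^2)$. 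On the other hand, the very definition of $c(d,q,t)$ gives $\nrmcnd wq^2\le c(d,q,t)\,(\nrmcnd{\nabla w}2^2+t\,\nrmcnd w2^2)$. Raising these to the powers $\zeta$ and $1-\zeta$ and multiplying, the factor $(\nrmcnd{\nabla w}2^2+t\,\nrmcnd w2^2)^{\zeta+(1-\zeta)}$ appears linearly and, taking the infimum over $w\neq 0$, produces the announced bound $c(d,p,t)\le t^{-\zeta}\,[c(d,q,t)]^{1-\zeta}$.

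For the asymptotic consequence, I would multiply through by $t^{d/p-a_c}$ and collect powers of $t$. The key arithmetic identity is that
\[
\frac dp-a_c-\zeta-(1-\zeta)\Bigl(\frac dq-a_c\Bigr)=0,
\]
which follows directly from $d/p=d\zeta/2+(1-\zeta)d/q$ together with $1+a_c=d/2$. Hence
\[
t^{\frac dp-a_c}\,c(d,p,t)\le\bigl(t^{\frac dq-a_c}\,c(d,q,t)\bigr)^{1-\zeta},
\]
and passing to the limit $t\to\infty$, Lemma~\ref{Lem:CatrinaWang} applied at the exponent $q$ gives the upper bound $[\mathsf S_q(\R^d)]^{1-\zeta}=[\mathsf S_q(\R^d)]^{q(p-2)/[p(q-2)]}$, as claimed. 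The uniform character of the estimate in $p\in[2,q]$ comes from the fact that, at fixed $q$, the right-hand side is monotone in $\zeta\in[0,1]$ (hence in $p$), so that the supremum over $p$ is controlled by the value at the endpoint $p=q$, namely $\mathsf S_q(\R^d)$.

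There is no real obstacle here; the only point requiring mild care is the bookkeeping of the exponents of $t$, which ultimately relies on the fact that the scaling chosen in Remark~\ref{Rem:Scaling} is precisely the one which makes the Gagliardo–Nirenberg constant $\mathsf S_p(\R^d)$ appear, so that the Hölder interpolation, which is scale-covariant in a homogeneous manner, matches the scaling built into Lemma~\ref{Lem:CatrinaWang}. This is what allows the passage from a pointwise inequality in $t$ to a limiting inequality uniform in $p\in[2,q]$.
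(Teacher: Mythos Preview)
Your proof is correct and follows essentially the same route as the paper: H\"older interpolation $\nrmcnd wp\le\nrmcnd w2^{\zeta}\nrmcnd wq^{1-\zeta}$, the trivial bound $\nrmcnd w2^2\le t^{-1}(\nrmcnd{\nabla w}2^2+t\,\nrmcnd w2^2)$, the defining inequality for $c(d,q,t)$, and the exponent identity $\tfrac dp-a_c-\zeta=(1-\zeta)(\tfrac dq-a_c)$ to pass to the limit via Lemma~\ref{Lem:CatrinaWang}. Your added remark on the uniformity in $p$ (via monotonicity of $[\mathsf S_q(\R^d)]^{1-\zeta}$ in $\zeta$) is a slight elaboration beyond what the paper spells out, though note that it implicitly uses $\mathsf S_q(\R^d)\ge 1$; the paper's own statement is in any case somewhat informal on this point.
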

\begin{proof} Using the trivial estimate
\[
\nrmcnd w2^2\leq\frac 1t\left[\nrmcnd{\nabla w}2^2+t\,\nrmcnd w2^2\right]\,,
\]
the estimate of Lemma~\ref{Lem:CatrinaWang}
\[
\nrmcnd wq^2\leq c(d,q,t)\left[\nrmcnd{\nabla w}2^2+t\,\nrmcnd w2^2\right]
\]
and H\"older's interpolation: $\nrmcnd wp\le \nrmcnd w2^{\zeta}\,\nrmcnd wq^{1-\zeta}$, we easily get the first estimate. Since
\[
\tfrac dp-a_c-\zeta=(1-\zeta)\big(\tfrac dq-a_c\big)\;,
\]
we find
\[
t^{\frac dp-a_c}\,c(d,p,t)\le\(t^{\frac dq-a_c}\,c(d,q,t)\)^{1-\zeta}\;.
\]
and the second estimate follows.
\qed\end{proof}

\begin{remark}\label{Rem:Sobolev} Notice that for $d\ge 3$, the second estimate in Corollary~\ref{Cor:CatrinaWangUniform} also holds with $q=2^*$ and $\zeta=1-\vartheta(p,d)$. In such a case, we can actually prove that
\[
c(d,p,t)\le t^{a_c-\frac dp}\,\(\vartheta(p,d)\, \mathsf S_*(d)\)^{\vartheta(p,d)}\,\(1-\vartheta(p,d)\)^{1-\vartheta(p,d)}
\]
where $\mathsf S_*(d)=\C{CKN}(1,2^*,a_c^2)$ is the optimal constant in Sobolev's inequality: for any $u\in \H^1(\R^d)$, $\nrm u{2^*}^2\le\mathsf S_*(d)\nrm{\nabla u}2^2$.\end{remark}

\medskip Consider the functional $\mathcal F_{\theta,p,\Lambda}$ defined by \eqref{F} on $\H^1(\mathcal C)$. A minimizer exists for any $p>2$ if $d=1$ or $d=2$, and $p\in(2,2^*)$ if $d\ge3$. See~\cite{Catrina-Wang-01} for details if $\theta=1$ and \cite[Theorem 1.3 (ii)]{DE2010} if $\theta\in(\vartheta(p,d),1)$. The special, limiting case $\theta=\vartheta(p,1)$ is discussed in \cite{DDFT} if $d=1$ and in \cite{DE2010} if $d\ge 1$. From now on, we denote by $w=w_{\theta,p,\Lambda}$ an extremal for \eqref{Ineq:Gen_interp_Cylinder}, whenever it exists, that is, a minimizer for $\mathcal F_{\theta,p,\Lambda}$. It satisfies the following Euler-Lagrange equations,
\[
-\theta\,\Delta w+((1-\theta)\,t+\Lambda)\,w= (t+\Lambda)^{1-\theta}\,w^{p-1}
\]
with $t:=\nrmcnd{\nabla w}2^2/\nrmcnd w2^2$, when we assume the normalization condition
\[
\(\nrmcnd{\nabla w}2^2+\Lambda\,\nrmcnd w2^2\)^\theta\nrmcnd w2^{2\,(1-\theta)}=\nrmcnd wp^p\;.
\]
Such a condition can always be achieved by homogeneity and implies
\be{t:conseqnormal}
\nrmcnd wp^{p-2}=\frac 1{\C{CKN}(\theta,p,\Lambda)}\;.
\ee
As a consequence of the Euler-Lagrange equations, we also have
\be{t:uneetoile}
\nrmcnd{\nabla w}2^2+\Lambda\,\nrmcnd w2^2=(t+\Lambda)^{1-\theta}\,\nrmcnd wp^p\;.
\ee

\begin{remark}\label{EstmDE} If $w$ is a minimizer for $\mathcal F_{\theta,p,\Lambda}$, then we know that
\[
(t+\Lambda)^\theta\,\nrmcnd w2^2=\frac{\nrmcnd wp^2}{\C{CKN}(\theta,p,\Lambda)}\le\frac{\nrmcnd wp^2}{\C{CKN}^*(\theta,p,\Lambda)}=\frac{\nrmcnd wp^2}{\C{CKN}^*(\theta,p,1)}\, \Lambda^{\theta-\frac{p-2}{2\,p}}\;.
\]
On the other hand, by H\"older's inequality: $\nrmcnd wp\le\nrmcnd w{2^*}^{\vartheta(d,p)}\,\nrmcnd w2^{1-\vartheta(d,p)}$, and by Sobolev's inequality (cf. Remark~\ref{Rem:Sobolev}) written on the cylinder, we know that
\begin{multline*}
\nrmcnd wp^2\le(\mathsf S_*(d))^{\vartheta(p,d)}\nrmcnd w{2^*}^{2\,\vartheta(d,p)}\,\nrmcnd w2^{2\,(1-\vartheta(d,p))}\\
=(\mathsf S_*(d))^{\vartheta(p,d)}\(\nrmcnd{\nabla w}2^2+a_c^2\,\nrmcnd w2^2\)^{\vartheta(p,d)}\nrmcnd w2^{2\,(1-\vartheta(p,d))}\\
=(\mathsf S_*(d))^{\vartheta(p,d)}\(t+a_c^2\)^{\vartheta(p,d)}\nrmcnd w2^2\;.
\end{multline*}
Collecting the two estimates proves \eqref{Estim:t2} for $v(x)=|x|^{-a_c}\,w(s,\omega)$, where $s=-\log|x|$ and $\omega=x/|x|$, for any $x\in\R^d$ (Emden-Fowler transformation written for $a=0$).\end{remark}

As in \cite{Catrina-Wang-01}, we can assume that the extremal $w=w_{\theta,p,\Lambda}$ depends only on $s$ and on an azimuthal angle $\phi\in (0, \pi)$ of the sphere, and thus satisfies
\be{neweq}
-\theta\left(\partial_{ss}w+D_\phi\,(\partial_\phi w)\right)+((1-\theta)\,t+\Lambda)\,w= (t+\Lambda)^{1-\theta}\,w^{p-1}\,.
\ee
Here we denote by $\partial_sw$ and $\partial_\phi w$ the partial derivatives with respect to $s$ and $\phi$ respectively, and by $D_\phi$ the derivative defined by: $D_\phi w:= (\sin \phi)^{2-d}\,\partial_\phi((\sin \phi)^{d-2}\,w)$. Moreover, using the translation invariance of~\eqref{Ineq:Gen_interp_Cylinder} in the $s$-variable, the invariance of the functional $\mathcal F_{\theta,p,\Lambda}$ under the transformation $(s,\omega)\mapsto(-s,\omega)$ and the sliding method, we can also assume without restriction that $w$ is such that
\be{t:3.7-3.8}
\left\{\begin{array}{l}
w(s,\phi)=w(-s,\phi)\quad\forall\;(s,\phi)\in\R\times (0, \pi)\;,\vspace{6pt}\\
\partial_sw(s,\phi)<0\quad\forall\;(s,\phi)\in(0,+\infty)\times(0, \pi)\,,\vspace{6pt}\\
\max_{_{\mathcal C}}w=w(0,\phi_0)\;,
\end{array}\right.
\ee
for some $\phi_0\in[0,\pi]$. In particular notice that
\[
\nrmcnd{\nabla w}2^2= \omega_{d-2}\int_0^{+\infty}\int_0^\pi \left(|\partial_s w|^2+|\partial_\phi w|^2\right)\,(\sin \phi)^{d-2}\,d\phi\,ds
\]
where $\omega_{d-2}$ is the area of $\mathbb S^{d-2}$. From Lemma~\ref{Lem:CatrinaWang}, we obtain the following estimate:
\begin{corollary}\label{Cor:H1} Assume that $d\ge2$, $\Lambda>0$, $p\in(2,2^*)$ and $\theta\in(\vartheta(p,d),1)$. Let $t=t(\theta,p,\Lambda)$ be the maximal value of $\nrmcnd{\nabla w}2^2/\nrmcnd w2^2$ among all extremals of~\eqref{Ineq:Gen_interp_Cylinder}. Then $t(\theta,p,\Lambda)$ is bounded from above and moreover
\[\label{t:Limits}
\limsup_{p\to 2_+}t(\theta,p,\Lambda)<\infty\quad\mbox{and}\quad\limsup_{\Lambda\to 0_+}t(\theta,p,\Lambda)<\infty\;,
\]
where the limits above are taken respectively for $\Lambda>0$ and $\theta\in(0,1)$ fixed, and for $p\in(2,2^*)$ and $\theta\in(\vartheta(p,d),1)$ fixed. \end{corollary}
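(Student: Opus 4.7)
The strategy is to extract an algebraic inequality on $t$ from two facts: the identity satisfied by any extremal, and a Gagliardo--Nirenberg-type upper bound on $\nrmcnd wp^2$. Testing the Euler--Lagrange equation \eqref{neweq} against $w$ and using the normalization conditions \eqref{t:conseqnormal} and \eqref{t:uneetoile}, one finds
\[
\nrmcnd wp^p = (t+\Lambda)^\theta\,\nrmcnd w2^2,\qquad \nrmcnd wp^2 = \C{CKN}(\theta,p,\Lambda)\,(t+\Lambda)^\theta\,\nrmcnd w2^2.
\]
I plan to complement this with an upper bound of the form $\nrmcnd wp^2 \le C\,(t+\tau)^\sigma\,\nrmcnd w2^2$ whose exponent $\sigma$ is strictly less than $\theta$; the two estimates together will then bound $t$.

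To obtain such an upper bound, pick $q\in(p,2^*)$ and interpolate via H\"older as $\nrmcnd wp \le \nrmcnd w2^{\zeta}\,\nrmcnd wq^{1-\zeta}$ with $\zeta = 2(q-p)/[p(q-2)]$, then control $\nrmcnd wq^2$ by Lemma~\ref{Lem:CatrinaWang}:
\[
\nrmcnd wp^2 \le c(d,q,\tau)^{1-\zeta}\,(t+\tau)^{1-\zeta}\,\nrmcnd w2^2,\qquad 1-\zeta = \frac{q(p-2)}{p(q-2)}.
\]
For $d\ge3$ the natural choice $q=2^*$ yields $1-\zeta=\vartheta(p,d)$ and reproduces the estimate \eqref{Estim:t2} already proved in Remark~\ref{EstmDE}; for $d=2$, where $2^*=\infty$, one chooses $q$ large enough so that $1-\zeta$ (which tends to $\vartheta(2,p)=(p-2)/p$ as $q\to\infty$) is strictly below $\theta$. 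Since $\theta>\vartheta(p,d)$ by assumption, in either case one can ensure $1-\zeta<\theta$. Combining the two displays gives
\[
\C{CKN}(\theta,p,\Lambda)\,(t+\Lambda)^\theta \le c(d,q,\tau)^{1-\zeta}\,(t+\tau)^{1-\zeta},
\]
and the exponent mismatch $\theta>1-\zeta$ forces $t\le M(\theta,p,\Lambda)$ for a finite constant $M$ that can be written explicitly in terms of the quantities involved; since the inequality applies to any extremal, this bounds the supremum $t(\theta,p,\Lambda)$.

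For the two limit statements, I plan to fix $q\in(2,2^*)$ and $\tau>0$ once and for all, independent of the varying parameter. As $p\to 2_+$ at fixed $\theta\in(0,1)$, the exponent $1-\zeta=q(p-2)/[p(q-2)]\to 0$, so the gap $\theta-(1-\zeta)$ is uniformly positive; Corollary~\ref{Cor:CatrinaWangUniform} then controls $c(d,q,\tau)^{1-\zeta}$ uniformly in $p$, and the explicit expression of $\C{CKN}^*(\theta,p,\Lambda)\le\C{CKN}(\theta,p,\Lambda)$ shows the denominator stays bounded away from $0$. As $\Lambda\to 0_+$ at fixed $p,\theta$, one reads from the explicit formula that $\C{CKN}^*(\theta,p,\Lambda)\sim\Lambda^{(p-2)/(2p)-\theta}\to+\infty$, since the assumption $\theta>\vartheta(p,d)=d(p-2)/(2p)\ge(p-2)/p>(p-2)/(2p)$ (for $d\ge2$) makes the exponent negative; the explicit bound from the previous paragraph then actually drives $t$ to $0$.

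The principal technical obstacle is the case $d=2$: without a Sobolev embedding into $L^{2^*}$ one cannot quote \eqref{Estim:t2} directly, and one must pick $q$ both large enough to ensure $1-\zeta<\theta$ and with a sufficiently controlled Gagliardo--Nirenberg constant $c(d,q,\tau)$. Corollary~\ref{Cor:CatrinaWangUniform} is precisely designed to handle this balance uniformly in a right neighborhood of $p=2$, which is why it was established beforehand.
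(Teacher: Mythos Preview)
Your argument is correct and follows essentially the same mechanism as the paper's proof: both combine the extremal identity $(t+\Lambda)^\theta\nrmcnd w2^2=\nrmcnd wp^2/\C{CKN}$ with a Gagliardo--Nirenberg type estimate to produce an inequality in which the exponent $\theta$ on the left beats a smaller exponent (namely $1-\vartheta(p,d)$ versus $1-\theta$ in the paper, $\theta$ versus $1-\zeta\le\vartheta(p,d)$ in your version), forcing $t$ to be bounded. The paper proceeds by contradiction, applying Lemma~\ref{Lem:CatrinaWang} at the exponent $p$ itself and using the \emph{asymptotic} $c(d,p,t)\sim\mathsf S_p(\R^d)\,t^{a_c-d/p}$ as $t\to\infty$; you instead fix an auxiliary $q>p$ and a parameter $\tau$, interpolate by H\"older (i.e.\ invoke Corollary~\ref{Cor:CatrinaWangUniform}), and obtain an explicit upper bound for $t$. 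Your formulation has the minor advantage of yielding a quantitative bound directly and of treating $d=2$ and $d\ge3$ uniformly without appealing to the Sobolev endpoint; the paper's formulation is a little shorter in each of the three regimes because it reads off the relevant exponent $d/p-a_c=1-\vartheta(p,d)$ immediately from Lemma~\ref{Lem:CatrinaWang}. In both cases the two limiting statements are handled by tracking $\C{CKN}^*(\theta,p,\Lambda)$ (via its explicit formula) through the inequality, exactly as you outline.
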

\begin{proof} Let $t_n:=\nrmcnd{\nabla w_n}2^2/\nrmcnd{w_n}2^2$, where $w_n$ are extremals of \eqref{Ineq:Gen_interp_Cylinder} with $\Lambda=\Lambda_n\in(0, +\infty)$, $p=p_n\in(2,2^*)$ and $\theta\in (0,1]$. We shall be concerned with one of the following regimes:
\begin{enumerate}
\item[(i)] $\Lambda_n=\Lambda$ and $p_n=p$ do not depend on $n\in\N$, and $\theta\in(\vartheta(p,d),1)$,
\item[(ii)] $\Lambda_n=\Lambda$ does not depend on $n\in\N$, $\theta\in(0,1]$ and $\lim_{n\to\infty}p_n=2$,
\item[(iii)] $p_n=p$ does not depend on $n\in\N$, $\theta\in[\vartheta(p,d),1]$ and $\lim_{n\to\infty}\Lambda_n=0$.
\end{enumerate}
Assume that $\lim_{n\to\infty}t_n=\infty$, consider \eqref{t:uneetoile} and apply Lemma~\ref{Lem:CatrinaWang} to get
\be{3-6gab}
t_n^{\frac d{p_n}-a_c}\,\tfrac{\min\{\theta,1-\theta\}}{\mathsf S_{p_n}(\R^d)}\,(1+o(1))\le (t_n+\Lambda_n)^{1-\theta}\,\nrmcnd{w_n}{p_n}^{p_n-2}=\frac{(t_n+\Lambda_n)^{1-\theta}}{\C{CKN}(\theta,p_n,\Lambda_n)}
\ee
where we have used the assumption that $1-\vartheta(p_n,d)=\tfrac d{p_n}-a_c>1-\theta$. This gives a contradiction in case (i).

Using the fact that
\[
\C{CKN}(\theta,p,\Lambda)\ge\C{CKN}^*(\theta,p,\Lambda)
\]
where $\C{CKN}^*(\theta,p,\Lambda)$ is the best constant in \eqref{Ineq:CKN} among radial functions given in Section~\ref{Sec:Intro}, and observing that
\[\label{t:K_asymp-1}
\C{CKN}^*(\theta,p,\Lambda)=\C{CKN}^*(\theta,p,1)\,\Lambda^{\frac{p-2}{2\,p}-\theta}\,,
\]
we get
\[
1/\C{CKN}^*(\theta,p,\Lambda)\sim\Lambda^{\theta-\frac{p-2}{2\,p}}\to0\quad\mbox{as}\quad \Lambda\to 0\;.
\]
In case (iii), if we assume that $t_n\to +\infty$, then this provides a contradiction with~\eqref{3-6gab}.

In case (ii), we know that
\[\label{t:K_asymp-2}
\lim_{p\to 2_+}\C{CKN}^*(\theta,p,\Lambda)=\Lambda^{-\theta}
\]
and, by \eqref{3-6gab} and Lemma~\ref{Lem:CatrinaWang},
\[
t_n^{\frac d{p_n}-a_c}\,\frac{\min\{\theta,1-\theta\}}{\mathsf S_{p_n}(\R^d)}\,(1+o(1))\le\frac{(t_n+\Lambda)^{1-\theta}}{\C{CKN}^*(\theta,p_n,\Lambda)}=\Lambda^\theta\,t_n^{1-\theta}(1+o(1))
\]
as $n\to\infty$. Again this provides a contradiction in case we assume \hbox{$\displaystyle\lim_{n\to\infty}t_n=\infty$}.\qed\end{proof}

Let $\mathsf k(p,\Lambda):=\C{CKN}^*(\theta=1,p,\Lambda)$ and recall that $\mathsf k(p,\Lambda)=\Lambda^{-(p+2)/(2\,p)}\,\mathsf k(p,1)$ and $\lim_{p\to2_+}\mathsf k(p,1)=1$. As a consequence of the symmetry result in \cite{0902}, we have
\begin{lemma}\label{Lem:DELT} There exists a positive continuous function $\bar\varepsilon$ on $(2,2^*)$ with
\[
\lim_{p\to 2}\bar\varepsilon(p)=\infty\quad\mbox{and}\quad\lim_{p\to 2^*}\bar\varepsilon(p)=a_c^{-(p+2)/p}\,\mathsf k(2^*,1)
\]
such that, for any $p\in(2,2^*)$,
\[\label{Ineq:Z}
\nrmcnd wp^2\leq\varepsilon\,\nrmcnd{\nabla w}2^2+Z(\varepsilon,p)\,\nrmcnd w2^2\quad\forall\;w\in\H^1(\mathcal C)
\]
holds for any $\varepsilon\in(0,\bar\varepsilon(p))$ with $Z(\varepsilon,p):=\varepsilon^{-\frac{p-2}{p+2}}\,\mathsf k(p,1)^\frac{2\,p}{p+2}$. \end{lemma}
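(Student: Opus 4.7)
The plan is to derive the stated family of inequalities from the CKN inequality at $\theta=1$ by invoking the symmetry result of \cite{0902} to replace the implicit optimal constant by the explicit radial one $\mathsf k(p,\Lambda)$, and then reparametrizing in $\varepsilon$.

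The input from \cite{0902} is the existence, for each $p\in(2,2^*)$, of a threshold $\Lambda^*(p)\in(0,+\infty]$ (explicitly $\Lambda^*(p)=(a_c-a(p))^2$ for a continuous curve $p\mapsto a(p)$) such that, for $\theta=1$ and $\Lambda\in(0,\Lambda^*(p))$, extremals of \eqref{Ineq:Gen_interp_Cylinder} are radial and
\[
\nrmcnd wp^2\leq\mathsf k(p,\Lambda)\,\bigl(\nrmcnd{\nabla w}2^2+\Lambda\,\nrmcnd w2^2\bigr)\qquad\forall\,w\in\H^1(\mathcal C),
\]
with $\mathsf k(p,\Lambda)=\mathsf k(p,1)\,\Lambda^{-(p+2)/(2p)}$. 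Setting $\varepsilon:=\mathsf k(p,\Lambda)$, which defines a strictly monotone bijection of $\Lambda>0$ onto $\varepsilon>0$ with inverse $\Lambda(\varepsilon)=(\mathsf k(p,1)/\varepsilon)^{2p/(p+2)}$, and computing
\[
\varepsilon\,\Lambda(\varepsilon)=\varepsilon\cdot(\mathsf k(p,1)/\varepsilon)^{2p/(p+2)}=\varepsilon^{-(p-2)/(p+2)}\,\mathsf k(p,1)^{2p/(p+2)}=Z(\varepsilon,p),
\]
converts the CKN inequality into exactly the form claimed in the lemma.

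The admissible range of $\varepsilon$ is the image of $(0,\Lambda^*(p))$ under $\Lambda\mapsto\mathsf k(p,\Lambda)$, and $\bar\varepsilon(p)$ is identified with the endpoint of this interval; its continuity on $(2,2^*)$ is inherited from continuity of $p\mapsto a(p)$. For the asymptotics, as $p\to 2^+$ the symmetric region of \cite{0902} extends to all of $(0,+\infty)$, so $\Lambda^*(p)\to+\infty$ and the admissible $\varepsilon$-range covers the entire half-line, i.e.\ $\bar\varepsilon(p)\to+\infty$; as $p\to 2^{*-}$, the curve tends to the classical threshold $\Lambda^*(p)\to a_c^2$, so the endpoint converges to $\mathsf k(2^*,a_c^2)=a_c^{-(2^*+2)/2^*}\,\mathsf k(2^*,1)$, matching the stated value. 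The main technical point, and the only step not immediately elementary, is extracting from \cite{0902} the precise boundary behavior of $a(p)$ at the two endpoints $p=2$ and $p=2^*$; once these asymptotics are in hand, the rest of the proof reduces to a single change of variable together with inversion of a monotone function.
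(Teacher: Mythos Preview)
Your approach is essentially identical to the paper's: both invoke the $\theta=1$ symmetry result from \cite{0902} to obtain $\nrmcnd wp^2\le\mathsf k(p,\lambda)\bigl(\nrmcnd{\nabla w}2^2+\lambda\,\nrmcnd w2^2\bigr)$ for $\lambda\in(0,\bar\lambda(p)]$, then set $\varepsilon=\mathsf k(p,\lambda)$, check $\varepsilon\,\lambda=Z(\varepsilon,p)$, and define $\bar\varepsilon(p):=\mathsf k(p,\bar\lambda(p))$. One small slip: since $\lambda\mapsto\mathsf k(p,\lambda)=\mathsf k(p,1)\,\lambda^{-(p+2)/(2p)}$ is \emph{decreasing}, the image of $(0,\bar\lambda(p)]$ is $[\bar\varepsilon(p),\infty)$, so with $\bar\lambda(p)\to\infty$ one gets $\bar\varepsilon(p)\to 0$ as $p\to2$ --- your conclusion that the admissible $\varepsilon$-range eventually covers the whole half-line is right, but the finite endpoint goes to $0$, not $+\infty$.
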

\begin{proof} From \cite{0902}, we know that there exists a continuous function $\bar\lambda:(2,2^*)\to(a_c^2,\infty)$ such that $\lim_{p\to 2}\bar\lambda(p)=\infty$, $\lim_{p\to 2^*}\bar\lambda(p)=a_c^2$ and, for any $\lambda\in(0,\bar\lambda(p)]$, the inequality
\[
\nrmcnd wp^2\leq \mathsf k(p,\lambda)\(\nrmcnd{\nabla w}2^2+\lambda\,\nrmcnd w2^2\)\quad\forall\;w\in\H^1(\mathcal C)
\]
holds true. Therefore, letting
\[
\bar\varepsilon(p):=\mathsf k(p,\bar\lambda(p))=\bar\lambda(p)^{-(p+2)/(2\,p)}\,\mathsf k(p,1)\;,
\]
our estimate holds with $\lambda=\bar\lambda(p)$, $\varepsilon=\lambda^{-(p+2)/(2\,p)}\,\mathsf k(p,1)$ and $Z(\varepsilon,p)=\varepsilon\,\lambda$.\qed\end{proof}

\begin{lemma}\label{Lem:Poincare} Assume that $d\ge2$, $p\in(2,2^*)$ and $\theta\in[\vartheta(p,d),1]$. If $w$ is an extremal function of \eqref{Ineq:Gen_interp_Cylinder} and if $w$ is not $s$-symmetric, then
\be{3-aaa}
\theta\,(d-1)+ (1-\theta)\,t+\Lambda< (t+\Lambda)^{1-\theta}(p-1)\,\nrmcnd w\infty^{p-2}\,.
\ee
\end{lemma}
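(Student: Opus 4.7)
The plan is to test the Euler--Lagrange equation
\[
-\theta\,\Delta w+((1-\theta)\,t+\Lambda)\,w=(t+\Lambda)^{1-\theta}\,w^{p-1}
\]
against $v(s,\omega):=w(s,\omega)-\bar w(s)$, where $\bar w(s):=|\S|^{-1}\int_\S w(s,\omega)\,d\omega$ is the spherical average of $w$. Because the normalization~\eqref{t:3.7-3.8} is already imposed, the failure of $s$-symmetry forces a non-trivial angular dependence, and hence $v\not\equiv 0$; one easily checks that $v\in\H^1(\mathcal C)$ by a Jensen/Fubini argument on $\bar w$.

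The key computations hinge on $\int_\S v(s,\cdot)\,d\omega=0$ for every $s\in\R$. Decomposing $w=\bar w+v$, this kills the cross term in $\int_{\mathcal C}wv\,dy$, so $\int_{\mathcal C}wv\,dy=\int_{\mathcal C}v^2\,dy$. Similarly, $\partial_s w=\bar w'(s)+\partial_s v$ while $\nabla_\omega w=\nabla_\omega v$, and $\int_{\mathcal C}\bar w'(s)\,\partial_s v\,dy=\int_\R\bar w'(s)\,\partial_s\bigl(\int_\S v\,d\omega\bigr)\,ds=0$, giving $\int_{\mathcal C}\nabla w\cdot\nabla v\,dy=\int_{\mathcal C}|\nabla v|^2\,dy$. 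A slice-wise Poincar\'e inequality on $\S$ (whose first non-zero eigenvalue of $-\Delta_\S$ is $d-1$), together with discarding $|\partial_s v|^2$, yields $\int_{\mathcal C}|\nabla v|^2\,dy\ge(d-1)\int_{\mathcal C}v^2\,dy$. Testing the equation against $v$ and combining:
\[
\bigl[\theta(d-1)+(1-\theta)\,t+\Lambda\bigr]\int_{\mathcal C}v^2\,dy\le(t+\Lambda)^{1-\theta}\int_{\mathcal C}w^{p-1}v\,dy.
\]

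For the right-hand side, the pointwise mean value theorem gives $w^{p-1}(s,\omega)-\bar w^{p-1}(s)=(p-1)\,\xi^{p-2}(s,\omega)\,v(s,\omega)$ with $\xi(s,\omega)$ between $\bar w(s)$ and $w(s,\omega)$; the identity $\int_\S v\,d\omega=0$ removes the $\bar w^{p-1}$ contribution, leaving $\int_{\mathcal C}w^{p-1}v\,dy=(p-1)\int_{\mathcal C}\xi^{p-2}\,v^2\,dy$. Since extremals may be assumed positive, $0<\xi\le\nrmcnd w\infty$, and because $p>2$ we have $\xi^{p-2}\le\nrmcnd w\infty^{p-2}$. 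Plugging this in and dividing by $\int_{\mathcal C}v^2\,dy>0$ delivers the non-strict version of~\eqref{3-aaa}.

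The main obstacle is promoting this to a \emph{strict} inequality, which is where~\eqref{t:3.7-3.8} becomes essential. The strict monotonicity $\partial_s w<0$ on $(0,+\infty)\times(0,\pi)$, together with the reflection symmetry $w(s,\phi)=w(-s,\phi)$, forces $w(s,\omega)<\max_{\mathcal C}w=\nrmcnd w\infty$ for \emph{every} $s\neq 0$ and every $\omega\in\S$, and hence also $\bar w(s)<\nrmcnd w\infty$ for $s\neq 0$. Consequently $\xi(s,\omega)<\nrmcnd w\infty$ for a.e.~$(s,\omega)\in\mathcal C$. Since $\{v\neq 0\}$ has positive measure and the slice $\{s=0\}$ has measure zero in $\mathcal C$, the estimate $(p-1)\int_{\mathcal C}\xi^{p-2}v^2\,dy<(p-1)\nrmcnd w\infty^{p-2}\int_{\mathcal C}v^2\,dy$ is strict, and~\eqref{3-aaa} follows.\finprf
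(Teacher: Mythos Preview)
Your argument is correct and follows a genuinely different route from the paper's. The paper differentiates the Euler--Lagrange equation \eqref{neweq} in the azimuthal variable~$\phi$ and then tests against $D_\phi(\partial_\phi w)$; the key spectral input is the second-order inequality $\int_{\S}|D_\phi(\partial_\phi w)|^2\,d\omega\ge(d-1)\int_{\S}|\partial_\phi w|^2\,d\omega$, which is equivalent to $\|\Delta_{\S}w\|_2^2\ge(d-1)\,\|\nabla_{\S}w\|_2^2$. Your approach instead tests the undifferentiated equation against $v=w-\bar w$ and uses the standard zero-mean Poincar\'e inequality $\int_{\S}|\nabla_\omega v|^2\,d\omega\ge(d-1)\int_{\S}v^2\,d\omega$. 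Both exploit the same first non-zero eigenvalue $d-1$ of $-\Delta_{\S}$, but at different levels. What your route buys is that it avoids differentiating the equation (hence also the factor $(p-1)\,w^{p-2}$ arises via the mean value theorem rather than the chain rule) and does not rely on the preliminary reduction to dependence on $(s,\phi)$ only: the spherical average and the zero-mean Poincar\'e inequality work verbatim for a general $\omega$-dependence. The paper's route, on the other hand, yields strictness a bit more directly, since the dropped term $\theta\int_{\mathcal C}|\partial_s(\partial_\phi w)|^2\,dy$ is manifestly positive once $\partial_\phi w\not\equiv 0$ and $w$ decays in~$s$; in fact the same shortcut is available to you via the discarded $\theta\int_{\mathcal C}|\partial_s v|^2\,dy$ term (if $\partial_s v\equiv 0$ then $v$ is $s$-independent and in $\L^2(\mathcal C)$, hence $v\equiv 0$), so your detour through $\xi<\nrmcnd w\infty$ and \eqref{t:3.7-3.8} is sound but not strictly needed.
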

\begin{proof} Let $w$ be an extremal for \eqref{Ineq:Gen_interp_Cylinder}, normalized so that \eqref{t:3.7-3.8} holds. We denote by $\phi\in (0,\pi)$ the azimuthal coordinate on $\S$. By the Poincar\'e inequality in $\S$, we know that:
\[\label{3a-a}
\int_{\S}|D_\phi (\partial_\phi w)|^2\,d\omega\geq (d-1)\,\int_{\S}| (\partial_\phi w)|^2\,d\omega
\]
while, by multiplying the equation in \eqref{neweq} by $D_\phi (\partial_\phi w)$, after obvious integration by parts, we find:
\begin{multline*}\label{3-aa}
\hspace*{-6pt}\theta\left(\int_{\mathcal C}\left(\left|\partial_s\left(\partial_\phi w\right)\right|^2+\left|D_\phi\left(\partial_\phi w\right)\right|^2\right)\,dy\right)+ ((1-\theta)\,t+\Lambda)\int_{\mathcal C}\left|\partial_\phi w\right|^2\,dy \\
\hspace*{6pt}= (t+\Lambda)^{1-\theta}\,(p-1)\int_{\mathcal C}w^{p-2}\,\left|\partial_\phi w\right|^2\,dy\leq
(t+\Lambda)^{1-\theta}\,(p-1)\,\nrmcnd w\infty^{p-2}\int_{\mathcal C}\left|\partial_\phi w\right|^2\,dy\;.\hspace*{-6pt}
\end{multline*}
By combining the two above estimates, the conclusion holds if $\nrmcnd{\partial_\phi w}2\neq 0$.\qed\end{proof}

\subsection{The critical regime: approaching $a=a_c$}\label{Sec:Critical}

\begin{proposition}\label{Prop:critical} Assume that $d\ge2$, $p\in(2,2^*)$ and $\theta\in[\vartheta(p,d),1]$. Let $(\Lambda_n)_n$ be a sequence converging to $0_+$ and let $(w_n)_n$ be a sequence of extremals for \eqref{Ineq:Gen_interp_Cylinder}, satisfying the normalization condition \eqref{t:conseqnormal}. Then both $t_n:=\nrmcnd{\nabla w_n}2^2/\nrmcnd{w_n}2^2$ and $\nrmcnd{w_n}\infty$ converge to $0$ as $n\to+\infty$. \end{proposition}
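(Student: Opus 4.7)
The plan is to first show $\|w_n\|_p\to 0$, then upgrade this to $\|w_n\|_\infty\to 0$ via subcritical elliptic regularity applied to the Euler-Lagrange equation, and finally recover $t_n\to 0$ by an elementary interpolation.

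The boundedness of $t_n$ is provided by Corollary~\ref{Cor:H1}(iii). The decay $\|w_n\|_p\to 0$ follows from the normalization~\eqref{t:conseqnormal} and the lower bound $\C{CKN}(\theta,p,\Lambda_n)\geq\C{CKN}^*(\theta,p,\Lambda_n)=\C{CKN}^*(\theta,p,1)\,\Lambda_n^{(p-2)/(2p)-\theta}$, since the hypothesis $d\geq 2$ forces
\[
\theta-\tfrac{p-2}{2p}\ge \vartheta(p,d)-\tfrac{p-2}{2p}=\tfrac{(d-1)(p-2)}{2p}>0.
\]
Plugging this into \eqref{t:uneetoile} shows in addition that $\|\nabla w_n\|_2^2+\Lambda_n\|w_n\|_2^2\to 0$.

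The core step is the implication $\|w_n\|_p\to 0\Rightarrow\|w_n\|_\infty\to 0$. Up to replacing $w_n$ by $|w_n|$ I may assume $w_n\ge 0$ and view the Euler-Lagrange equation
\[
-\theta\,\Delta w_n+((1-\theta)t_n+\Lambda_n)\,w_n=(t_n+\Lambda_n)^{1-\theta}\,w_n^{p-1}
\]
as a subcritical semilinear elliptic equation on $\mathcal C=\R\times\S$ with uniformly bounded coefficients. I would first establish the uniform a priori bound $\sup_n\|w_n\|_\infty<\infty$ by a blow-up/Liouville argument: if $M_n:=\|w_n\|_\infty\to\infty$, rescaling about a maximum point $y_n$ of $w_n$ via $v_n(Y):=M_n^{-1}\,w_n\bigl(y_n+M_n^{-(p-2)/2}Y\bigr)$ would produce, in the limit, a bounded nonnegative nontrivial solution of $-\theta\,\Delta v=B\,v^{p-1}$ on $\R^d$ with $v(0)=1$, contradicting the Gidas-Spruck Liouville theorem in the subcritical range $p<2^*$. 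Once $(w_n)_n$ is uniformly bounded, the reduced coefficient $V_n:=((1-\theta)t_n+\Lambda_n)/\theta-(t_n+\Lambda_n)^{1-\theta}w_n^{p-2}/\theta$ in the linear form $-\Delta w_n+V_n w_n=0$ is uniformly bounded, so a classical Moser iteration on $\mathcal C$ yields radii $\rho>0$ and a constant $C>0$, both independent of $n$ and of the basepoint, such that
\[
\sup_{B_{\rho/2}(y)}w_n\le C\,\|w_n\|_{L^p(B_\rho(y))}\le C\,\|w_n\|_p\qquad\forall\,y\in\mathcal C.
\]
Choosing $y=y_n$ a maximum point then gives $\|w_n\|_\infty\le C\,\|w_n\|_p\to 0$.

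Finally, multiplying the Euler-Lagrange equation by $w_n$ and using $t_n=\|\nabla w_n\|_2^2/\|w_n\|_2^2$ produces the identity $(t_n+\Lambda_n)^\theta\,\|w_n\|_2^2=\|w_n\|_p^p$, which combined with the trivial interpolation $\|w_n\|_p^p\le\|w_n\|_\infty^{p-2}\,\|w_n\|_2^2$ yields
\[
(t_n+\Lambda_n)^\theta\le\|w_n\|_\infty^{p-2}\longrightarrow 0,
\]
hence $t_n\to 0$. The main obstacle is the uniform $L^\infty$ a priori bound together with the uniformity in $n$ and in the basepoint of the Moser iteration constant: both rest crucially on the subcriticality $p<2^*$ and on the bounded geometry of the product manifold $\mathcal C$, which allow the Gidas-Spruck Liouville theorem and the local Sobolev embeddings to be invoked with constants independent of $y\in\mathcal C$.
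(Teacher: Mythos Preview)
Your argument is essentially correct and reaches the same conclusion as the paper, but the route you take to $\nrmcnd{w_n}\infty\to 0$ is genuinely different and considerably heavier. The paper exploits the normalization \eqref{t:3.7-3.8}, which pins the maximum of $w_n$ at a fixed point $(0,\phi_0)\in\mathcal C$: from $\nrmcnd{\nabla w_n}2\to 0$ and $\nrmcnd{w_n}p\to 0$ one gets $w_n\to 0$ strongly in $\H^1_{\rm loc}(\mathcal C)$, and a single local elliptic bootstrap (subcritical, bounded coefficients) upgrades this to $C^{1,\alpha}_{\rm loc}$ convergence, hence $\nrmcnd{w_n}\infty=w_n(0,\phi_0)\to 0$. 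No blow-up, no Gidas--Spruck, no uniform-in-basepoint Moser constant is needed. Your approach replaces this with a two-step machinery (Liouville/blow-up, then global Moser) that works but is more than necessary; conversely, your final inequality $(t_n+\Lambda_n)^\theta\le\nrmcnd{w_n}\infty^{p-2}$, obtained directly from the tested Euler--Lagrange equation, is slightly cleaner than the paper's contradiction argument with $W_n=w_n/\|w_n\|_{\H^1}$, though the underlying computation is the same.

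One small gap in your blow-up step: the limiting coefficient $B=\lim_n(t_n+\Lambda_n)^{1-\theta}$ may vanish when $\theta<1$ (you only know $t_n$ is bounded, not bounded away from zero along the blow-up subsequence). In that case the limit profile $v$ is harmonic, bounded, nonnegative with $v(0)=1$, hence $v\equiv 1$ by Liouville; Gidas--Spruck does not apply. The contradiction is still immediate, however, since $v_n\to 1$ locally uniformly forces $\nrmcnd{w_n}p^p\ge c\,M_n^{\,p-d(p-2)/2}\to\infty$ (the exponent is positive precisely because $p<2^*$), contradicting $\nrmcnd{w_n}p\to 0$. You should make this case distinction explicit.
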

\begin{proof} First of all notice that, under the given assumption, we can use the results in \cite[Theorem 1.3 (i)]{DE2010} in order to ensure the existence of an extremal for \eqref{Ineq:Gen_interp_Cylinder} even for $\theta=\vartheta(p,d)$. Moreover with the notations of Corollary~\ref{Cor:H1}, we know that $(t_n)_n$ is bounded and, by \eqref{t:conseqnormal} and \eqref{t:uneetoile},
\[
\nrmcnd{\nabla w_n}2^2+\Lambda_n\,\nrmcnd{w_n}2^2= \frac{(t_n+\Lambda_n)^{1-\theta}}{\C{CKN}(\theta,p,\Lambda_n)^{p/(p-2)}}
\]
with $\C{CKN}(\theta,p,\Lambda_n)^{-p/(p-2)}\le \C{CKN}^*(\theta,p,\Lambda_n)^{-p/(p-2)}\sim\Lambda_n^{\frac{\theta\,p}{p-2}-\frac12}\to0$ as $\Lambda_n\to 0_+$, where we have used the fact that $\frac{\theta\,p}{p-2}-\frac12>0$ for $\theta\geq \vartheta(p,d)$. Thus, using~\eqref{t:conseqnormal} we have $\displaystyle\lim_{n\to\infty}\nrmcnd{\nabla w_n}2=0$ and $\displaystyle\lim_{n\to\infty}\nrmcnd{w_n}p=0$. Hence, $(w_n)_n$ converges to $w\equiv 0$, weakly in $\H^1_{\rm loc}({\mathcal C})$ and also in $C^{1,\alpha}_{\rm loc}$ for some $\alpha\in (0, 1)$. By~\eqref{t:3.7-3.8}, it follows
\[\label{t:norm--conv}
\lim_{n\to\infty}\nrmcnd{w_n}\infty=0\;.
\]

Now, let $t_\infty:=\lim_{n\to\infty}t_n$ and assume by contradiction that $t_\infty>0$. The function $W_n=w_n/\|w_n\|_{\H^1(\R^d)}$ solves
\[
-\theta\,\Delta W_n+((1-\theta)\,t_n+\Lambda_n)\,W_n= (t_n+\Lambda_n)^{1-\theta}\,w_n^{p-2}\,W_n\;.
\]
Multiply the above equation by $W_n$ and integrate on $\mathcal C$, to get
\begin{multline*}
\theta\,\nrmcnd{\nabla W_n}2^2+((1-\theta)\,t_\infty(1+o(1))+\Lambda_n)\,\nrmcnd{W_n}2^2\\
\le(t_\infty(1+o(1))+\Lambda_n)^{1-\theta}\,\nrmcnd{w_n}\infty^{p-2}\,\nrmcnd{W_n}2^2\;.
\end{multline*}
This is in contradiction with the fact that $\|W_n\|_{\H^1(\R^d)}=1$, for any $n\in\N$.
\qed\end{proof}

\begin{corollary}\label{Cor:SymCritical} Assume that $d\ge2$, $p\in(2,2^*)$ and $\theta\in[\vartheta(p,d),1]$. There exists $\varepsilon=\varepsilon(\theta,p)>0$ such that extremals of \eqref{Ineq:Gen_interp_Cylinder} are $s$-symmetric for every $0<\Lambda<\varepsilon$.
\end{corollary}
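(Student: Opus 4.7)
The natural strategy is a proof by contradiction that directly combines Proposition~\ref{Prop:critical} with the Poincaré-type obstruction of Lemma~\ref{Lem:Poincare}. Suppose the conclusion fails. Then there is a sequence $\Lambda_n \to 0_+$ and, for each $n$, an extremal $w_n$ of \eqref{Ineq:Gen_interp_Cylinder} (with parameters $\theta$, $p$, $\Lambda_n$) that is \emph{not} $s$-symmetric. After normalizing according to \eqref{t:conseqnormal} and applying the reductions that give \eqref{t:3.7-3.8}, set $t_n := \nrmcnd{\nabla w_n}{2}^{2}/\nrmcnd{w_n}{2}^{2}$.

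Next I would invoke Lemma~\ref{Lem:Poincare}: since each $w_n$ is a non-$s$-symmetric extremal, we have
\[
\theta\,(d-1)+(1-\theta)\,t_n+\Lambda_n<(t_n+\Lambda_n)^{1-\theta}\,(p-1)\,\nrmcnd{w_n}{\infty}^{p-2}\,.
\]
The left-hand side is bounded below by $\theta\,(d-1)$, which is strictly positive because $\theta\ge\vartheta(p,d)>0$ (as $p>2$) and $d\ge 2$. It remains to show the right-hand side tends to $0$ as $n\to\infty$, which gives the desired contradiction for $n$ large (equivalently, for $\Lambda_n$ small enough).

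To control the right-hand side I would apply Proposition~\ref{Prop:critical}, which is exactly tailored to this situation: under the assumptions $\theta\in[\vartheta(p,d),1]$ and $\Lambda_n\to 0_+$, it gives both $t_n\to 0$ and $\nrmcnd{w_n}{\infty}\to 0$. Since $p>2$, the factor $\nrmcnd{w_n}{\infty}^{p-2}$ tends to $0$; and whether $\theta<1$ (in which case $(t_n+\Lambda_n)^{1-\theta}\to 0$) or $\theta=1$ (in which case the factor equals $1$), the product $(t_n+\Lambda_n)^{1-\theta}(p-1)\nrmcnd{w_n}{\infty}^{p-2}$ converges to $0$. Thus the strict inequality above fails for all sufficiently large $n$, a contradiction.

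This argument produces the required threshold $\varepsilon=\varepsilon(\theta,p)>0$ below which every extremal must be $s$-symmetric. The main substantive obstacle is not in the argument above, which is essentially a one-line limit, but rather is already absorbed into Proposition~\ref{Prop:critical}: the delicate point is showing that $t_n$ stays bounded (via Corollary~\ref{Cor:H1}) and then passes to $0$, together with the $L^\infty$-decay of $w_n$, as $\Lambda_n\to 0_+$. Once those facts are in hand, Lemma~\ref{Lem:Poincare} converts them mechanically into the symmetry conclusion.
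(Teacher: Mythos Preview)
Your proof is correct and matches the paper's approach exactly: the paper's argument is the one-line observation that any sequence $(w_n)_n$ as in Proposition~\ref{Prop:critical} violates \eqref{3-aaa} for $n$ large unless $\partial_\phi w_n\equiv 0$, which is precisely the contradiction you spell out. Your handling of the case $\theta=1$ and the lower bound $\theta(d-1)>0$ are the only details beyond what the paper writes, and both are correct.
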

\begin{proof}Any sequence $(w_n)_n$ as in Proposition~\ref{Prop:critical} violates \eqref{3-aaa} for $n$ large enough, unless $\partial_\phi w_n\equiv 0$. The conclusion readily follows.\qed\end{proof}

\subsection{The Hardy regime: approaching $p=2$}\label{Sec:Hardy}

We proceed similarly as in Proposition~\ref{Prop:critical} and Corollary~\ref{Cor:SymCritical}.
\begin{proposition}\label{Prop:SymHardy} Assume that $d\ge2$, fix $\Lambda>0$ and $\theta\in(0,1]$. There exists $\eta\in(0,4\,\theta/(d-2\,\theta))$ such that all extremals of \eqref{Ineq:Gen_interp_Cylinder} are $s$-symmetric if $p\in(2,2+\eta)$.\end{proposition}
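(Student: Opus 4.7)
The plan is to argue by contradiction, mirroring Corollary~\ref{Cor:SymCritical} but swapping the limit $\Lambda\to 0_+$ for $p\to 2_+$. I would suppose that there exist sequences $p_n\to 2_+$ with $p_n<p^*(d,\theta)$ and extremals $w_n=w_{\theta,p_n,\Lambda}$ of \eqref{Ineq:Gen_interp_Cylinder} that are \emph{not} $s$-symmetric, normalized by \eqref{t:conseqnormal} and satisfying \eqref{t:3.7-3.8}; each $w_n$ then solves \eqref{neweq} with $t_n:=\nrmcnd{\nabla w_n}2^2/\nrmcnd{w_n}2^2$. The target is a contradiction with the strict inequality \eqref{3-aaa} of Lemma~\ref{Lem:Poincare}.

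The first step is to gather uniform a priori bounds. Case~(ii) in the proof of Corollary~\ref{Cor:H1}, which covers $\theta\in(0,1]$ with $p_n\to 2_+$, guarantees that $(t_n)_n$ stays bounded. Combining \eqref{t:conseqnormal} with $\C{CKN}(\theta,p_n,\Lambda)\ge\C{CKN}^*(\theta,p_n,\Lambda)\to\Lambda^{-\theta}$ bounds $\nrmcnd{w_n}{p_n}$ uniformly. Inserting this in the identity $(t_n+\Lambda)^\theta\,\nrmcnd{w_n}2^2=\nrmcnd{w_n}{p_n}^{p_n}$, which follows from \eqref{t:conseqnormal} and \eqref{t:uneetoile}, bounds $\nrmcnd{w_n}2$ and hence $\|w_n\|_{\H^1(\mathcal C)}$.

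The second step is to upgrade this to a uniform estimate $\nrmcnd{w_n}{\infty}\le M$. Since $p_n-1\to 1_+$, the nonlinearity in \eqref{neweq} is subcritical uniformly in $n$; a Moser-type bootstrap on the cylinder $\mathcal C=\R\times\S$, initialized by the uniform $\H^1(\mathcal C)$ bound and using the subcritical Sobolev embedding $\H^1(\mathcal C)\hookrightarrow \L^q(\mathcal C)$ (with $q<2^*$ when $d\ge 3$, any finite $q$ when $d=2$), together with the monotonicity and reflection properties \eqref{t:3.7-3.8} that localize $w_n$ near $\{s=0\}$, produces such an $M$ independent of $n$. Extracting a subsequence with $t_n\to t_\infty\in[0,\infty)$ and using $\limsup_n (p_n-1)\,\nrmcnd{w_n}{\infty}^{p_n-2}\le 1$ (the exponent tends to $0$ while the base stays bounded), one passes to the limit in \eqref{3-aaa} to obtain
\[
\theta\,(d-1)+(1-\theta)\,t_\infty+\Lambda\;\le\;(t_\infty+\Lambda)^{1-\theta}.
\]
The weighted AM--GM inequality $(t_\infty+\Lambda)^{1-\theta}\le (1-\theta)(t_\infty+\Lambda)+\theta$ then collapses this to $d-1+\Lambda\le 1$, which is absurd for $d\ge 2$ and $\Lambda>0$. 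The cap $\eta<4\theta/(d-2\theta)$ simply reflects the admissibility range $p<p^*(d,\theta)=2d/(d-2\theta)$.

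The main obstacle is the uniform $L^\infty$ bound: although the limiting nonlinearity is essentially linear, the Moser iteration must be carried out with constants that do not blow up as $p_n\to 2_+$ on the non-compact cylinder $\mathcal C$. Here the explicit monotonicity and reflection symmetry encoded in \eqref{t:3.7-3.8} are essential, as they pin the mass of $w_n$ near $s=0$ and allow one to combine interior elliptic regularity with decay along the $s$-axis to produce a global estimate independent of $n$.
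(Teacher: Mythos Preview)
There is a genuine gap in your second paragraph. From \eqref{t:conseqnormal} you only obtain
\[
\nrmcnd{w_n}{p_n}^{p_n-2}=\frac{1}{\C{CKN}(\theta,p_n,\Lambda)}\le\frac{1}{\C{CKN}^*(\theta,p_n,\Lambda)}\longrightarrow\Lambda^\theta\;,
\]
and since the exponent $p_n-2\to 0$, this does \emph{not} bound $\nrmcnd{w_n}{p_n}$; consequently neither $\nrmcnd{w_n}2$ nor $\|w_n\|_{\H^1(\mathcal C)}$ is controlled by this argument, and the Moser iteration in your third paragraph has no uniform input. In fact the uniform $\L^\infty$ bound you need is \emph{false} whenever $\Lambda>1$: applying the maximum principle to \eqref{neweq} at a maximum point gives
\[
\nrmcnd{w_n}\infty^{p_n-2}\ge\frac{(1-\theta)\,t_n+\Lambda}{(t_n+\Lambda)^{1-\theta}}\;,
\]
and the right-hand side is bounded below (along any subsequence with $t_n\to t_\infty\ge 0$) by a quantity whose value at $t_\infty=0$ is $\Lambda^\theta>1$. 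Hence $\nrmcnd{w_n}\infty\to\infty$ and your claim $\limsup_n\nrmcnd{w_n}\infty^{p_n-2}\le 1$ fails.

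The paper circumvents this by first proving $t_n\to 0$, using Lemma~\ref{Lem:DELT} (the inequality $\nrmcnd wp^2\le\varepsilon\,\nrmcnd{\nabla w}2^2+Z(\varepsilon,p)\,\nrmcnd w2^2$ with $Z(\varepsilon,p)\to 1$ as $p\to 2$) to squeeze $(t_n+\Lambda)$ against $\Lambda^\theta(\varepsilon\,t_n+1)(t_n+\Lambda)^{1-\theta}$. Once $t_n\to 0$, one rescales $W_n:=w_n/\nrmcnd{w_n}{p_n}$, shows $\nrmcnd{\nabla W_n}2\to 0$ and $\nrmcnd{W_n}2\le 1$, hence $W_n\rightharpoonup 0$ and, by elliptic estimates together with \eqref{t:3.7-3.8}, $\nrmcnd{W_n}\infty\to 0$. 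This yields the sharp limit $\nrmcnd{w_n}\infty^{p_n-2}\to\Lambda^\theta$, which plugged into \eqref{3-aaa} gives $\theta(d-1)+\Lambda\le\Lambda$, a contradiction. Your AM--GM idea is actually nice: with the correct bound $\Lambda^\theta$ in place of $1$ it would still give a contradiction without knowing $t_\infty=0$; but obtaining any usable bound on $\nrmcnd{w_n}\infty^{p_n-2}$ seems to force you through the $t_n\to 0$ step anyway.
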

\begin{proof} The case $\theta=1$ is already established in \cite{0902}. So, for fixed $\Lambda>0$ and $0<\theta<1$, let $w_n$ be an extremal of \eqref{Ineq:Gen_interp_Cylinder} with $p=p_n\to2_+$. By Corollary~\ref{Cor:H1}, we know that $(t_n)_n$ is bounded and
\[
\nrmcnd{\nabla w_n}2^2+\Lambda\,\nrmcnd{w_n}2^2=(t_n+\Lambda)^{1-\theta}\,\nrmcnd{w_n}{p_n}^{p_n-2}\,\nrmcnd{w_n}{p_n}^2\;.
\]
First we prove that $t_n$ converges to $0$ as $n\to+\infty$. Assume by contradiction that $\lim_{n\to\infty}t_n=t>0$ after extracting a subsequence if necessary, and choose $\varepsilon\in(0,1/\Lambda)$ so that
\be{Ineq:HHyp}
(t+\Lambda)^\theta>\Lambda^\theta\,\(\varepsilon\,t+1\)\,.
\ee
Recalling that $\C{CKN}^*(\theta,p,\Lambda)\sim\Lambda^{-\theta}$ as $p\to 2_+$, we find that
\[
\nrmcnd{w_n}{p_n}^{p_n-2}=1/\C{CKN}(\theta,p_n,\Lambda)\le 1/\C{CKN}^*(\theta,p_n,\Lambda)\to\Lambda^\theta\,.
\]
Using Lemma~\ref{Lem:DELT} to estimate $\nrmcnd{w_n}{p_n}^2$ by $\varepsilon\,\nrmcnd{\nabla w}2^2+Z(\varepsilon,p)\,\nrmcnd w2^2$, for $n$ large enough, we get
\begin{multline*}
(t_n+\Lambda)\,\nrmcnd{w_n}2^2\\=\nrmcnd{\nabla w_n}2^2+\Lambda\,\nrmcnd{w_n}2^2=(t_n+\Lambda)^{1-\theta}\,\nrmcnd{w_n}{p_n}^{p_n-2}\,\nrmcnd{w_n}{p_n}^2\\
\le(t_n+\Lambda)^{1-\theta}\,\Lambda^\theta\,(1+o(1))\,\(\varepsilon\,t_n+Z(\varepsilon,p_n,d)\)\,\nrmcnd{w_n}2^2\;.
\end{multline*}
Hence, by passing to the limit as $n\to\infty$, and using the fact that
\[
\lim_{n\to\infty}Z(\varepsilon,p_n,d)=1\;,
\]
we deduce that
\[
(t+\Lambda)\le(t+\Lambda)^{1-\theta}\,\Lambda^\theta\,\(\varepsilon\,t+1\)
\]
in contradiction with \eqref{Ineq:HHyp}. This proves that $\lim_{n\to +\infty} t_n=0$.

Summarizing, $w_n$ is a solution of
\[
-\theta\,\Delta w_n+((1-\theta)\,t_n+\Lambda)\,w_n=(t_n+\Lambda)^{1-\theta}\,w_n^{p_n-1}
\]
such that $t_n=\nrmcnd{\nabla w_n}2^2/\nrmcnd{w_n}2^2\to 0$ as $n\to+\infty$. Let $c_n:=\nrmcnd{w_n}{p_n}$ and $W_n:=w_n/c_n$. We know that
\[
c_n^{p_n-2}=\frac 1{\C{CKN}(\theta,p_n,\Lambda)}\le \frac 1{\C{CKN}^*(\theta,p_n,\Lambda)}\to\Lambda^\theta
\]
and
\[
\nrmcnd{\nabla W_n}2^2+\Lambda\,\nrmcnd{W_n}2^2=(t_n+\Lambda)^{1-\theta}\,c_n^{p_n-2}\,\nrmcnd{W_n}{p_n}^{p_n}=(t_n+\Lambda)^{1-\theta}\,c_n^{p_n-2}\,.
\]
Hence we have
\[
\lim_{n\to\infty}\nrmcnd{\nabla W_n}2^2+\Lambda\,\nrmcnd{W_n}2^2=\lim_{n\to\infty}(t_n+\Lambda)^{1-\theta}\,c_n^{p_n-2}\leq\Lambda\;.
\]
Furthermore, from $\lim_{n\to\infty}t_n=0$, we deduce that $\lim_{n\to\infty}\nrmcnd{\nabla W_n}2^2=0$ and $\limsup_{n\to\infty}\nrmcnd{W_n}2^2\leq 1$. This proves that $(W_n)_n$ is bounded in $\H^1(\mathcal C)$ and that, up to subsequences, its weak limit is $0$. By elliptic estimates and~\eqref{t:3.7-3.8}, we conclude that $\limsup_{n\to\infty}\nrmcnd{W_n}\infty=0$. Therefore, $\limsup_{n\to\infty}\nrmcnd{W_n}\infty^{p_n-2}\le 1$.

We can summarize the properties we have obtained so far for an extremal $w_n$ of~\eqref{Ineq:Gen_interp_Cylinder} with $p=p_n\to 2_+$ as follows:
\[
\lim_{n\to\infty}t_n=0\quad\mbox{and}\quad\limsup_{n\to\infty}\nrmcnd{w_n}{p_n}^{p_n-2}\leq \Lambda^\theta\,.
\]
Incidentally, by means of the maximum principle for \eqref{neweq}, we also get that
\[
\nrmcnd{w_n}\infty^{p_n-2}\geq \frac{(1-\theta)\,t_n+\Lambda}{(t_n+\Lambda)^{1-\theta}}\geq \Lambda^\theta\,,
\]
which establishes that
\[
\lim_{n\to\infty}\nrmcnd{w_n}\infty^{p_n-2}=\Lambda^\theta\,.
\]
Inequality \eqref{3-aaa} is clearly violated for $n$ large enough unless $\partial_\phi w_n\equiv 0$. This concludes the proof.\qed\end{proof}

\subsection{A reformulation of Theorem \ref{Thm:Main} on the cylinder. Scalings and consequences}\label{subsec:symmetrytheta}

As in \cite{0902}, it is convenient to rewrite Theorem \ref{Thm:Main} using the Emden-Fowler transformation.
\begin{theorem}\label{Thm:Main'} For all $d\geq 2$, there exists a continuous function $\Lambda^*$ defined on the set $\{(\theta,p)\in(0,1]\times(2,2^*)\,:\,\theta\geq\vartheta(p,d)\}$ with values in $(0, +\infty)$ such that $\displaystyle\lim_{p\to 2_+}\Lambda^*(\theta,p)=+\infty$ and \begin{itemize}
\item[(i)] If $(\Lambda,p)\in(0, \Lambda^*(\theta,p))\times(2,2^*)$, then \eqref{Ineq:CKN} has only $s$-symmetric extremals.
\item[(ii)] If $\Lambda=\Lambda^*(\theta,p)$, then $\C{CKN}(\theta,p,\Lambda)=\C{CKN}^*(\theta,p,\Lambda)$.
\item[(iii)] If $(\Lambda,p)\in(\Lambda^*(\theta,p), +\infty)\times(2,2^*)$, none of the extremals of \eqref{Ineq:CKN} is $s$-symmetric.
\item[(iv)] $0<\Lambda^*(\theta,p) \leq \underline\Lambda(\theta,p)$.
\end{itemize}
\end{theorem}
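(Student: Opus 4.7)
The plan is to adapt to \eqref{Ineq:Gen_interp_Cylinder} the continuation strategy of \cite{0902}. Its essential feature, in the cylindrical formulation, is a dilation in the $s$-variable which propagates symmetry breaking monotonically in~$\Lambda$ and thus reduces all four assertions to the preparatory results of Sections~\ref{Sec:LinearInstability} and~\ref{Sec:Preliminaries}.

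First I would set
\[
\Lambda^*(\theta,p) := \sup\bigl\{\Lambda > 0 \,:\, \C{CKN}(\theta,p,\Lambda) = \C{CKN}^*(\theta,p,\Lambda)\bigr\}.
\]
Corollary~\ref{Cor:SymCritical} yields $\Lambda^*(\theta,p) > 0$, and the linear instability analysis of Section~\ref{Sec:LinearInstability} (i.e.\ Proposition~\ref{Thm:CKN-SymmetryBreaking} transported to $\mathcal C$) forces $\Lambda^*(\theta,p) \leq \underline\Lambda(\theta,p) < +\infty$. This is (iv).

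The core step is the following monotonicity claim: \emph{if $\C{CKN}(\theta,p,\Lambda_0)>\C{CKN}^*(\theta,p,\Lambda_0)$ for some $\Lambda_0>0$, then $\C{CKN}(\theta,p,\Lambda)>\C{CKN}^*(\theta,p,\Lambda)$ for every $\Lambda>\Lambda_0$.} For $w\in\H^1(\mathcal C)$ and $\mu>0$, set $\tilde w(s,\omega):=w(s/\mu,\omega)$; the change of variable $t=s/\mu$ together with a direct tracking of the exponents in~\eqref{F} gives, with $\alpha:=-(2p\theta-p+2)/(p\theta)$,
\[
\mathcal F_{\theta,p,\Lambda}[\tilde w]\;=\;\mu^{\alpha}\,\frac{\bigl(\nrmcnd{\partial_s w}2^2+\mu^2\,\nrmcnd{\nabla_\omega w}2^2+\Lambda\mu^2\,\nrmcnd{w}2^2\bigr)\,\nrmcnd{w}2^{2(1-\theta)/\theta}}{\nrmcnd{w}p^{2/\theta}}\,.
\]
When $w$ is $s$-symmetric, this reduces to the exact identity $\mathcal F_{\theta,p,\Lambda}[\tilde w]=\mu^\alpha\,\mathcal F_{\theta,p,\Lambda\mu^2}[w]$, whence $\mathcal F^*_\Lambda=\mu^\alpha\,\mathcal F^*_{\Lambda\mu^2}$, where $\mathcal F^*_\Lambda$ denotes the infimum of $\mathcal F_{\theta,p,\Lambda}$ restricted to $s$-symmetric functions. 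For a non-$s$-symmetric $w$ and $\mu<1$, the identity becomes the strict inequality $\mathcal F_{\theta,p,\Lambda}[\tilde w]<\mu^\alpha\,\mathcal F_{\theta,p,\Lambda\mu^2}[w]$, since $\mu^2\,\nrmcnd{\nabla_\omega w}2^2<\nrmcnd{\nabla_\omega w}2^2$ and $\mu^\alpha>0$. Now the hypothesis $\C{CKN}(\theta,p,\Lambda_0)>\C{CKN}^*(\theta,p,\Lambda_0)$ yields some $w_0\in\H^1(\mathcal C)$ with $\mathcal F_{\theta,p,\Lambda_0}[w_0]<\mathcal F^*_{\Lambda_0}$, and such a $w_0$ is automatically non-$s$-symmetric. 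Choosing $\mu:=\sqrt{\Lambda_0/\Lambda}<1$ then gives
\[
\mathcal F_{\theta,p,\Lambda}[\tilde w_0]\;<\;\mu^\alpha\,\mathcal F_{\theta,p,\Lambda_0}[w_0]\;<\;\mu^\alpha\,\mathcal F^*_{\Lambda_0}\;=\;\mathcal F^*_{\Lambda}\,,
\]
which proves the claim. The delicate aspect of this main step is purely the bookkeeping of the exponent $\alpha$ and the verification that the discrepancy term from $\nabla_\omega w$ pushes the inequality in the desired direction for $\mu<1$; no existence of extremals is needed at this stage.

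Parts (i) and (iii) then follow immediately from the monotonicity and the definition of $\Lambda^*$. Part (ii) is obtained by passing to the limit $\Lambda\nearrow\Lambda^*$ in the equality $\C{CKN}(\theta,p,\Lambda)=\C{CKN}^*(\theta,p,\Lambda)$, using on one hand the explicit continuity of $\Lambda\mapsto\C{CKN}^*(\theta,p,\Lambda)$ and on the other hand the continuity of $\Lambda\mapsto\C{CKN}(\theta,p,\Lambda)$, which follows from the existence of extremals~\cite{DE2010} together with the a priori bounds of Corollary~\ref{Cor:H1} that rule out vanishing or concentration when $\Lambda$ varies on compact intervals of $(0,+\infty)$. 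The continuity of $(\theta,p)\mapsto\Lambda^*(\theta,p)$ is a consequence of the joint continuity of $\C{CKN}$ in its three arguments, and the asymptotic $\lim_{p\to 2_+}\Lambda^*(\theta,p)=+\infty$ is a direct restatement of Proposition~\ref{Prop:SymHardy}.
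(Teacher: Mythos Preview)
Your overall strategy matches the paper's: define $\Lambda^*$, establish monotonicity via $s$-dilations, and then argue continuity. Your formulation of the monotonicity step is actually cleaner than the paper's Lemma~\ref{Lem:Scaling}: by working at the level of constants (needing only a test function $w_0$ with $\mathcal F_{\theta,p,\Lambda_0}[w_0]<\mathcal F^*_{\Lambda_0}$, rather than an actual non-symmetric extremal), you avoid the separate treatment of $\theta=\vartheta(p,d)$ that the paper carries out in Lemma~\ref{Lem:LambdaStarCrit}. One small point you glossed over: with your definition of $\Lambda^*$, part~(i) asserts that all extremals are $s$-symmetric, not merely that the constants coincide. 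This does follow from your scaling identity (a non-symmetric extremal at $\lambda<\Lambda^*$ would produce, for any $\Lambda\in(\lambda,\Lambda^*)$, a function beating $\mathcal F^*_\Lambda$), but it deserves a sentence.

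The genuine gap is in your continuity argument. Joint continuity of $(\theta,p,\Lambda)\mapsto\C{CKN}(\theta,p,\Lambda)$ gives only \emph{upper} semicontinuity of $\Lambda^*$: if $\Lambda^*(\theta_n,p_n)\ge M$ then $\C{CKN}=\C{CKN}^*$ at $(\theta_n,p_n,\lambda)$ for every $\lambda<M$, and passing to the limit yields the same at $(\theta,p,\lambda)$. Lower semicontinuity does \emph{not} follow this way: one may have $\C{CKN}(\theta_n,p_n,\lambda)>\C{CKN}^*(\theta_n,p_n,\lambda)$ with the gap shrinking to zero, and continuity alone cannot exclude $\C{CKN}(\theta,p,\lambda)=\C{CKN}^*(\theta,p,\lambda)$. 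The paper closes this by a compactness argument on extremals: for $\lambda$ strictly between $\limsup\Lambda^*(\theta_n,p_n)$ and $\Lambda^*(\theta,p)$ one extracts a sequence of non-$s$-symmetric extremals $w_{\theta_n,p_n,\lambda}$ converging to an extremal for $(\theta,p,\lambda)$, necessarily $s$-symmetric since $\lambda<\Lambda^*(\theta,p)$. But, as noted in Section~\ref{Sec:Intro}, such a convergence of non-symmetric to symmetric extremals forces $\lambda=\underline\Lambda(\theta,p)$, contradicting $\lambda<\Lambda^*(\theta,p)\le\underline\Lambda(\theta,p)$. This step genuinely uses the spectral information from Section~\ref{Sec:LinearInstability}, not just continuity of the optimal constants; in the critical case $\theta=\vartheta(p,d)$ it also requires the comparison with $\C{GN}(p)$ to guarantee existence of the non-symmetric extremals along the sequence.
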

Notice that $s$-symmetric and non $s$-symmetric extremals may coexist in case (ii). In~(iv), we use the notation $\underline\Lambda(\theta,p)=(a_c-\underbar a(\theta,p))^2$, where the function $\underbar a(\theta,p)$ is defined in~Ê\eqref{functionabar}.

\medskip A key step for the proof of Theorem~\ref{Thm:Main'} relies on scalings in the $s$ variable of the cylinder. If $w\in\H^1(\mathcal C)\setminus\{0\}$, let $w_\sigma(s,\omega):=w(\sigma\,s,\omega)$ for $\sigma>0$. A simple calculation shows that
\be{scaling3}
\mathcal F_{\theta,p,\sigma^2\Lambda}[w_\sigma]= \sigma^{2-\frac1\theta+\frac2{p\,\theta}}\,\mathcal F_{\theta,p,\Lambda}[w]-\sigma^{2-\frac1\theta+\frac2{p\,\theta}}\,(\sigma^2-1)\,\frac{\nrmcnd{\nabla_\omega w}2^2\,\nrmcnd w2^{2\,\frac{1-\theta}\theta}}{\nrmcnd wp^{2/\theta}}\;.
\ee
As a consequence, we observe that
\begin{multline*}
\C{CKN}^*(\theta,p,\sigma^2\Lambda)^{-\frac1\theta}=\mathcal F_{\theta,p,\sigma^2\Lambda}[w_{\theta,p,\sigma^2\Lambda}^*]\\
=\sigma^{2-\frac1\theta+\frac2{p\,\theta}}\,\C{CKN}^*(\theta,p,\Lambda)^{-\frac1\theta}=\sigma^{2-\frac1\theta+\frac2{p\,\theta}}\,\mathcal F_{\theta,p,\Lambda}[w_{\theta,p,\Lambda}^*]\;.
\end{multline*}
\begin{lemma}\label{Lem:Scaling} If $d\ge 2$, $\Lambda>0$ and $p\in(2,2^*)$, then the following holds:
\begin{itemize}
\item[(i)] If $\C{CKN}(\theta,p,\Lambda)=\C{CKN}^*(\theta,p,\Lambda)$, then $\C{CKN}(\theta,p,\lambda)=\C{CKN}^*(\theta,p,\lambda)$ and, after a proper normalization, $w_{\theta,p,\lambda}=w^*_{\theta,p,\lambda}$ for any $\;\lambda\in(0,\Lambda)$.
\item[(ii)] If there is an extremal $w_{\theta,p,\Lambda}$, which is not $s$-symmetric, even up to translations in the $s$-direction, then $\C{CKN}(\theta,p,\lambda)>\C{CKN}^*(\theta,p,\lambda)$ for all $\lambda>\Lambda$.
\end{itemize}
\end{lemma}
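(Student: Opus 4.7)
The plan is to deduce both parts directly from the scaling identity \eqref{scaling3}, whose crucial structural feature is that the corrective term involves only the angular part of the gradient. Writing $\alpha:=2-\tfrac1\theta+\tfrac2{p\theta}$ and introducing
\[
R[w]:=\frac{\nrmcnd{\nabla_\omega w}2^2\,\nrmcnd w2^{2(1-\theta)/\theta}}{\nrmcnd wp^{2/\theta}}\ge 0,
\]
I note that $R[w]=0$ if and only if $w$ is $s$-symmetric, and \eqref{scaling3} reads
\[
\mathcal F_{\theta,p,\sigma^2\Lambda}[w_\sigma]=\sigma^\alpha\,\mathcal F_{\theta,p,\Lambda}[w]-\sigma^\alpha(\sigma^2-1)\,R[w].
\]
Applied to a radial extremal (for which $R\equiv 0$) this yields the homogeneity relation $\C{CKN}^*(\theta,p,\sigma^2\Lambda)^{-1/\theta}=\sigma^\alpha\,\C{CKN}^*(\theta,p,\Lambda)^{-1/\theta}$ already recorded in the preamble. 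Moreover $w\mapsto w_\sigma$ is a bijection of $\H^1(\mathcal C)$ for each $\sigma>0$, so infima behave well.

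For (i), I would fix $\lambda\in(0,\Lambda)$ and set $\sigma^2=\Lambda/\lambda>1$. Since $R\ge 0$ and $\sigma^2-1>0$, the identity (with base point $\lambda$) gives $\mathcal F_{\theta,p,\Lambda}[w_\sigma]\le\sigma^\alpha\mathcal F_{\theta,p,\lambda}[w]$ for every $w\in\H^1(\mathcal C)$. Taking the infimum in $w$ and using bijectivity of $w\mapsto w_\sigma$, I get $\C{CKN}(\theta,p,\Lambda)^{-1/\theta}\le\sigma^\alpha\C{CKN}(\theta,p,\lambda)^{-1/\theta}$. Combining with the hypothesis $\C{CKN}(\theta,p,\Lambda)=\C{CKN}^*(\theta,p,\Lambda)$ and the homogeneity above,
\[
\sigma^\alpha\,\C{CKN}^*(\theta,p,\lambda)^{-1/\theta}=\C{CKN}^*(\theta,p,\Lambda)^{-1/\theta}\le\sigma^\alpha\,\C{CKN}(\theta,p,\lambda)^{-1/\theta},
\]
which, together with the always-true $\C{CKN}^*(\theta,p,\lambda)\le\C{CKN}(\theta,p,\lambda)$, forces equality of the two constants at level $\lambda$. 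Now if $w_{\theta,p,\lambda}$ is any extremal at $\lambda$ (existence is guaranteed in the relevant parameter range by \cite{DE2010}), applying the scaling identity exactly gives $\mathcal F_{\theta,p,\Lambda}[(w_{\theta,p,\lambda})_\sigma]=\C{CKN}(\theta,p,\Lambda)^{-1/\theta}-\sigma^\alpha(\sigma^2-1)R[w_{\theta,p,\lambda}]$, and since the left-hand side is at least $\C{CKN}(\theta,p,\Lambda)^{-1/\theta}$, the nonnegative remainder $R[w_{\theta,p,\lambda}]$ must vanish. Hence $w_{\theta,p,\lambda}$ is $s$-symmetric and coincides with $w^*_{\theta,p,\lambda}$ after normalization.

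For (ii), I would let $w$ be a non-$s$-symmetric extremal at $\Lambda$, so $R[w]>0$, and for $\lambda>\Lambda$ set $\sigma^2=\lambda/\Lambda>1$. The scaling identity with base point $\Lambda$ yields
\[
\mathcal F_{\theta,p,\lambda}[w_\sigma]=\sigma^\alpha\,\C{CKN}(\theta,p,\Lambda)^{-1/\theta}-\sigma^\alpha(\sigma^2-1)\,R[w]<\sigma^\alpha\,\C{CKN}(\theta,p,\Lambda)^{-1/\theta}.
\]
Using $\C{CKN}^*(\theta,p,\Lambda)\le\C{CKN}(\theta,p,\Lambda)$ and again the homogeneity of the radial constants, the right-hand side is bounded above by $\sigma^\alpha\C{CKN}^*(\theta,p,\Lambda)^{-1/\theta}=\C{CKN}^*(\theta,p,\lambda)^{-1/\theta}$. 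Therefore $\C{CKN}(\theta,p,\lambda)^{-1/\theta}\le\mathcal F_{\theta,p,\lambda}[w_\sigma]<\C{CKN}^*(\theta,p,\lambda)^{-1/\theta}$, i.e.\ $\C{CKN}(\theta,p,\lambda)>\C{CKN}^*(\theta,p,\lambda)$, as desired.

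The whole argument is algebraic manipulation of the exact scaling identity; the only qualitative inputs are the sign of $\sigma^2-1$ and the sign of $R[w]$. I do not foresee a genuine obstacle. The one care point is to make sure that the extremal $w_{\theta,p,\lambda}$ invoked in (i) really exists, which is not part of the statement but is needed to upgrade equality of constants to $s$-symmetry of the extremal; this is ensured by the existence results from \cite{DE2010} in the parameter range $\theta\ge\vartheta(p,d)$ used throughout this section.
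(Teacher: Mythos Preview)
Your proof is correct and follows essentially the same route as the paper: both arguments hinge on the scaling identity \eqref{scaling3}, the sign of $\sigma^2-1$, and the vanishing/non-vanishing of the angular term $R[w]$. The only cosmetic difference is that in part (i) the paper applies \eqref{scaling3} directly to the extremal $w_{\theta,p,\lambda}$ and reads off both conclusions at once, whereas you first establish $\C{CKN}(\theta,p,\lambda)=\C{CKN}^*(\theta,p,\lambda)$ via an infimum argument and then treat the extremal separately; this is a harmless reorganization.
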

Recall that, according to \cite{DDFT}, the extremal $w^*_{\theta,p,\lambda}$ among $s$-symmetric functions is uniquely defined up to translations in the $s$ variable, multiplications by a constant and scalings with respect to $s$. We assume that it is normalized in such a way that it is uniquely defined. As for non $s$-symmetric minimizers, we have no uniqueness result. With a slightly loose notation, we shall write $w_{\theta,p,\lambda}$ for \emph{an} extremal, but the reader has to keep in mind that, eventually, we pick \emph{one} extremal among several, which are not necessarily related by one of the above transformations.

\begin{proof} To prove (i), apply \eqref{scaling3} with $w_\sigma=w_{\theta,p,\lambda}$, $\lambda=\sigma^2\Lambda$, $0<\sigma<1$ and $w(s,\omega)=w_{\theta,p,\lambda}(s/\sigma,\omega)$:
\begin{multline*}
\frac 1{\C{CKN}(\theta,p,\lambda)^\frac1\theta} =\mathcal F_{\theta,p,\lambda}[w_{\theta,p,\lambda}]\\
= \sigma^{2-\frac1\theta+\frac2{p\,\theta}}\,\mathcal F_{\theta,p,\Lambda}[w]+\sigma^{-\frac1\theta+\frac2{p\,\theta}}\,(1-\sigma^2)\,\frac{\nrmcnd{\nabla_\omega w}2^2\,\nrmcnd w2^{2\,\frac{1-\theta}\theta}}{\nrmcnd wp^{2/\theta}}\\
\geq \frac{\sigma^{2-\frac1\theta+\frac2{p\,\theta}}}{\C{CKN}^*(\theta,p,\Lambda)\frac1\theta}+\sigma^{-\frac1\theta+\frac2{p\,\theta}}\,(1-\sigma^2)\,\frac{\nrmcnd{\nabla_\omega w}2^2\,\nrmcnd w2^{2\,\frac{1-\theta}\theta}}{\nrmcnd wp^{2/\theta}}\\
=\frac1{\C{CKN}^*(\theta,p,\lambda)^\frac1\theta} +\sigma^{-\frac1\theta+\frac2{p\,\theta}}\,(1-\sigma^2)\,\frac{\nrmcnd{\nabla_\omega w}2^2\,\nrmcnd w2^{2\,\frac{1-\theta}\theta}}{\nrmcnd wp^{2/\theta}}\;.
\end{multline*}
By definition, $\C{CKN}(\theta,p,\lambda)\geq\C{CKN}^*(\theta,p,\lambda)$ and from the above inequality we find that necessarily $\nabla_\omega w\equiv 0$, and the first claim follows.

Assume that $w_{\theta,p,\Lambda}$ is an extremal with explicit dependence in $\omega$ and apply~\eqref{scaling3} with $w=w_{\theta,p,\Lambda}$, $w_\sigma(s,\omega):=w(\sigma\,s,\omega)$, $\lambda=\sigma^2\Lambda$ and $\sigma>1$:
\begin{multline*}
\frac 1{\C{CKN}(\theta,p,\lambda)^\frac1\theta} \le\mathcal F_{\theta,p,\sigma^2\Lambda}[w_{\sigma}]\\
= \frac{\sigma^{2-\frac1\theta+\frac2{p\,\theta}}}{\C{CKN}(\theta,p,\lambda)^\frac1\theta}-\sigma^{-\frac1\theta+\frac2{p\,\theta}}\,(\sigma^2-1)\,\frac{\nrmcnd{\nabla_\omega w_{\theta,p,\Lambda}}2^2\nrmcnd{w_{\theta,p,\Lambda}}2^{2\,\frac{1-\theta}\theta}}{\nrmcnd {w_{\theta,p,\Lambda}}p^{2/\theta}}\\
\le \frac{\sigma^{2-\frac1\theta+\frac2{p\,\theta}}}{\C{CKN}^*(\theta,p,\Lambda)^\frac1\theta}-\sigma^{-\frac1\theta+\frac2{p\,\theta}}\,(\sigma^2-1)\,\frac{\nrmcnd{\nabla_\omega w_{\theta,p,\Lambda}}2^2\nrmcnd{w_{\theta,p,\Lambda}}2^{2\,\frac{1-\theta}\theta}}{\nrmcnd {w_{\theta,p,\Lambda}}p^{2/\theta}}\\
< \C{CKN}^*(\theta,p,\lambda)^{-\frac1\theta}\,,
\end{multline*}
since $\nabla_\omega w_{\theta,p,\Lambda}\not\equiv 0$. This proves the second claim.
\qed\end{proof}

By virtue of Corollary~\ref{Cor:SymCritical}, we know that, for $p\in (2, 2^*)$ and $\vartheta(p,d)\leq \theta\leq 1$, the set $\{\Lambda>0\,:\;\mathcal F_{\theta,p,\Lambda} \mbox{ has only $s$-symmetric minimizers}\}$ is \emph{not }empty, and hence we can define:
\[
\Lambda^*(\theta,p):=\sup\,\{\Lambda>0\;:\;\mathcal F_{\theta,p,\Lambda} \mbox{ has only $s$-symmetric minimizers}\}\;.
\]
In particular, by Proposition~\ref{Thm:CKN-SymmetryBreaking} (also see Section~\ref{Sec:LinearInstability}), Lemma~\ref{Lem:Scaling} and Proposition~\ref{Prop:SymHardy}, we have:
\[
0<\Lambda^*(\theta,p)\leq \underline\Lambda(\theta,p)\quad \mbox{and}\quad\lim_{p\to 2_+}\Lambda^*(\theta,p)=+\infty\;.
\]
\begin{corollary}\label{Cor:LambdaStar} With the above definition of $\Lambda^*(\theta,p)$, we have:
\begin{enumerate}
\item[(i)] if $\lambda\in (0, \Lambda^*(\theta,p))$, then $\C{CKN}(\theta,p,\lambda)=\C{CKN}^*(\theta,p,\lambda)$ and, after a proper normalization, $w_{\theta,p,\lambda}=w_{\theta,p,\lambda}^*$,
\item[(ii)] if $\lambda= \Lambda^*(\theta,p)$, then $\mathsf C_{\rm CKN}(\theta,p,\lambda)=\mathsf C_{\rm CKN}^*(\theta,p,\lambda)$,
\item[(iii)] if $\lambda>\Lambda^*(\theta,p)$ and $\theta>\vartheta(p,d)$, then $\C{CKN}(\theta,p,\lambda)>\C{CKN}^*(\theta,p,\lambda)$.
\end{enumerate}
\end{corollary}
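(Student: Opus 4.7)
The plan is to deduce each of the three assertions from the definition of $\Lambda^*(\theta,p)$ as a supremum, combined with Lemma~\ref{Lem:Scaling}, which transports information about $s$-symmetry of extremals from one value of $\Lambda$ to others.

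For (i), I would fix $\lambda<\Lambda^*(\theta,p)$. By the supremum definition, there exists some $\Lambda\in(\lambda,\Lambda^*(\theta,p)]$ belonging to the defining set, so that $\mathcal F_{\theta,p,\Lambda}$ has only $s$-symmetric minimizers and hence $\C{CKN}(\theta,p,\Lambda)=\C{CKN}^*(\theta,p,\Lambda)$. Applying Lemma~\ref{Lem:Scaling}(i) to this $\Lambda$ immediately gives the equality of constants and the identification of extremals at every $\mu\in(0,\Lambda)$, in particular at $\mu=\lambda$.

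For (ii), I would exploit the monotonicity of $\Lambda\mapsto\C{CKN}(\theta,p,\Lambda)$: since the right-hand side of \eqref{Ineq:Gen_interp_Cylinder} is increasing in $\Lambda$ for every fixed $w\in\H^1(\mathcal C)\setminus\{0\}$, the optimal constant is non-increasing in $\Lambda$. Taking a sequence $\Lambda_n\nearrow\Lambda^*(\theta,p)$, part (i) together with the explicit expression for $\C{CKN}^*(\theta,p,\cdot)$ (a continuous power of $\Lambda$) yields $\lim_n\C{CKN}(\theta,p,\Lambda_n)=\C{CKN}^*(\theta,p,\Lambda^*(\theta,p))$. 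Monotonicity then forces $\C{CKN}(\theta,p,\Lambda^*(\theta,p))\le\lim_n\C{CKN}(\theta,p,\Lambda_n)$, while the trivial bound $\C{CKN}\ge\C{CKN}^*$ provides the reverse inequality, and sandwiching gives equality at $\Lambda^*(\theta,p)$.

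For (iii), I would fix $\lambda>\Lambda^*(\theta,p)$ and pick any $\Lambda\in(\Lambda^*(\theta,p),\lambda)$. By the supremum definition, $\Lambda$ is not in the defining set, so $\mathcal F_{\theta,p,\Lambda}$ fails to have only $s$-symmetric minimizers. The hypothesis $\theta>\vartheta(p,d)$ guarantees, via the existence result recalled at the beginning of Section~\ref{Sec:Preliminaries}, that an extremal does exist at this $\Lambda$; therefore at least one such extremal must be non-$s$-symmetric. Lemma~\ref{Lem:Scaling}(ii) then propagates the strict inequality $\C{CKN}(\theta,p,\mu)>\C{CKN}^*(\theta,p,\mu)$ to every $\mu>\Lambda$, in particular to $\mu=\lambda$. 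The only slightly delicate point in the whole argument is the sandwich step of (ii); it rests purely on the monotonicity of $\C{CKN}(\theta,p,\cdot)$ together with the explicit continuity of $\C{CKN}^*(\theta,p,\cdot)$, so no substantial obstacle is expected.
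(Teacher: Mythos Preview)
Your proposal is correct and follows essentially the same route as the paper: part (i) via Lemma~\ref{Lem:Scaling}(i), part (ii) via monotonicity of $\Lambda\mapsto\C{CKN}(\theta,p,\Lambda)$ together with a limit from below, and part (iii) via existence of extremals for $\theta>\vartheta(p,d)$ combined with Lemma~\ref{Lem:Scaling}(ii). Your extra care in (iii) of choosing an intermediate $\Lambda\in(\Lambda^*(\theta,p),\lambda)$ so that Lemma~\ref{Lem:Scaling}(ii) yields the strict inequality precisely at $\lambda$ is a welcome clarification of what the paper leaves implicit.
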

\begin{proof} (i) is a consequence of Lemma \ref{Lem:Scaling} (i). It is easy to check that $\C{CKN}(\theta,p,\lambda)$ is a non-increasing function of $\lambda$. By considering $\lim_{\lambda\to\Lambda_+}\mathcal F_{\theta,p,\Lambda}[w_{\theta,p,\lambda}^*]$, we get (ii). If $p\in (2, 2^*)$ and $\theta\in (\vartheta(p,d), 1]$, it has been shown in \cite{DE2010} that $\mathcal F_{\theta,p,\Lambda}$ always attains its minimum in $H^1(\mathcal C)\setminus\{0\}$, so that (iii) follows from Lemma~\ref{Lem:Scaling} (ii).\qed\end{proof}

\subsection{The proof of Theorem \ref{Thm:Main'}}\label{subsec:continuous-etc}

In case $\theta=\vartheta(p,d)$, extremals might not exist: see \cite{DE2010}. To complete the proof of Theorem \ref{Thm:Main'}, we have to prove that the property of Lemma~\ref{Cor:LambdaStar} (iii) also holds if $\theta=\vartheta(p,d)$ and to establish the continuity~of~$\Lambda^*$.
\begin{lemma}\label{Lem:LambdaStarCrit} If $\lambda>\Lambda^*(\theta,p)$ and $\theta=\vartheta(p,d)$, then $\C{CKN}(\theta,p,\lambda)>\C{CKN}^*(\theta,p,\lambda)$.\end{lemma}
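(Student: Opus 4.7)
The plan is to proceed by contradiction: assume that $\C{CKN}(\theta,p,\lambda)=\C{CKN}^*(\theta,p,\lambda)$ with $\theta:=\vartheta(p,d)$, and derive $\lambda\le\Lambda^*(\theta,p)$, which is incompatible with the hypothesis $\lambda>\Lambda^*(\theta,p)$. The strict inequality $\C{CKN}>\C{CKN}^*$ then follows at once, since $\C{CKN}\ge\C{CKN}^*$ always holds. Compared to Corollary~\ref{Cor:LambdaStar}~(iii), the novelty is that an extremal of $\mathcal F_{\theta,p,\lambda}$ need \emph{not} exist at the critical value $\theta=\vartheta(p,d)$, so Lemma~\ref{Lem:Scaling}~(ii), which starts from a non-$s$-symmetric extremal at $\lambda$, cannot be invoked. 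The remedy is to recast the scaling argument of Lemma~\ref{Lem:Scaling}~(i) purely at the level of the infima defining $\C{CKN}$ and $\C{CKN}^*$, so that existence of extremals at $\lambda$ is not required.

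Setting $\alpha:=2-\tfrac1\theta+\tfrac2{p\,\theta}$ and $X(w):=\nrmcnd{\nabla_\omega w}2^2\,\nrmcnd w2^{2(1-\theta)/\theta}\big/\nrmcnd wp^{2/\theta}\ge 0$, the identity~\eqref{scaling3} rewrites, for $\sigma\in(0,1)$ and any $w\in\H^1(\mathcal C)\setminus\{0\}$, as
\[
\mathcal F_{\theta,p,\sigma^2\lambda}[w_\sigma]=\sigma^\alpha\,\mathcal F_{\theta,p,\lambda}[w]+\sigma^\alpha(1-\sigma^2)\,X(w)\ge\sigma^\alpha\,\mathcal F_{\theta,p,\lambda}[w]\,,
\]
with equality iff $\nabla_\omega w\equiv 0$. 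Since $w\mapsto w_\sigma$ is a bijection of $\H^1(\mathcal C)$, taking the infimum over $w$ yields $\C{CKN}(\theta,p,\sigma^2\lambda)^{-1/\theta}\ge\sigma^\alpha\,\C{CKN}(\theta,p,\lambda)^{-1/\theta}$, while the restriction to $s$-symmetric test functions gives the corresponding equality $\C{CKN}^*(\theta,p,\sigma^2\lambda)^{-1/\theta}=\sigma^\alpha\,\C{CKN}^*(\theta,p,\lambda)^{-1/\theta}$. Combining these with the contradiction hypothesis $\C{CKN}(\theta,p,\lambda)=\C{CKN}^*(\theta,p,\lambda)$ produces $\C{CKN}(\theta,p,\mu)\le\C{CKN}^*(\theta,p,\mu)$, and hence equality, for every $\mu=\sigma^2\lambda\in(0,\lambda)$.

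I would then exploit the equality case to control possible extremals on $(0,\lambda)$: if an extremal $w$ of $\mathcal F_{\theta,p,\mu}$ happens to exist, decomposing $w=(w')_\sigma$ and using $\mathcal F_{\theta,p,\lambda}[w']\ge\C{CKN}(\theta,p,\lambda)^{-1/\theta}$ forces $X(w')=0$, so that $w'$ and therefore $w$ are $s$-symmetric. Thus at every $\mu\in(0,\lambda)$, $\mathcal F_{\theta,p,\mu}$ has only $s$-symmetric minimizers (possibly vacuously, if none exist), and by the very definition of $\Lambda^*(\theta,p)$ this gives $\mu\le\Lambda^*(\theta,p)$. Passing to the limit $\mu\to\lambda^-$ yields $\lambda\le\Lambda^*(\theta,p)$, the desired contradiction. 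The main obstacle is the first step, namely upgrading Lemma~\ref{Lem:Scaling}~(i)---whose proof in Section~\ref{subsec:symmetrytheta} selects a specific extremal at $\lambda$---to an infimum statement that survives the possible absence of an extremal at $\lambda$; once this is in place, the rest is a straightforward unfolding of the scaling identity and of the definition of $\Lambda^*$.
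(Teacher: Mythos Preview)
Your argument is correct and takes a genuinely different route from the paper's proof. The paper argues via the Gagliardo--Nirenberg constant $\C{GN}(p)$: since $\C{GN}(p)\le\C{CKN}(\vartheta(p,d),p,\lambda)$ and $\C{CKN}^*(\vartheta(p,d),p,\cdot)$ is strictly decreasing, either $\C{CKN}^*<\C{GN}\le\C{CKN}$ directly for all $\lambda>\Lambda^*$, or else the strict inequality $\C{GN}<\C{CKN}^*$ near $\Lambda^*$ triggers the existence criterion \cite[Theorem~1.4~(i)]{DE2010}, producing a non-$s$-symmetric extremal just above $\Lambda^*$ to which Lemma~\ref{Lem:Scaling}~(ii) applies. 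Your approach bypasses both $\C{GN}$ and the external existence result entirely: you run the scaling identity~\eqref{scaling3} at the level of infima to propagate the hypothetical equality $\C{CKN}=\C{CKN}^*$ from $\lambda$ down to every $\mu\in(0,\lambda)$, then use the equality case to force any extremal at such $\mu$ to be $s$-symmetric, so that $\mu\le\Lambda^*$ by definition. The student's route is more self-contained and arguably cleaner for this particular lemma; the paper's route, on the other hand, has the virtue of exhibiting concretely (in Case~2) a non-$s$-symmetric extremal close to $\Lambda^*$, which is informative in its own right and connects to the existence theory used elsewhere in the paper.
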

\begin{proof} Consider the Gagliardo-Nirenberg inequality
\be{Ineq:GN}
\nrm up^2\le\C{GN}(p)\,\nrm{\nabla u}2^{2\,\vartheta(p,d)}\,\nrm u2^{2\,(1-\vartheta(p,d))}\quad\forall\;u\in\H^1(\R^d)
\ee
and assume that $\C{GN}(p)$ is the optimal constant. According to \cite{DE2010} (see Lemma~\ref{Lem:comparison} below for more details), we know that
\[
\C{GN}(p)\le\C{CKN}(\vartheta(p,d),p,\lambda)\;.
\]
According to \cite[Theorem 1.4 (i)]{DE2010} there are extremals for \eqref{Ineq:Gen_interp_Cylinder} with $\theta=\vartheta(p,d),\;p\in (2, 2^*)$ and $\lambda>0$, whenever the above inequality is strict. By Corollary~\ref {Cor:LambdaStar} (ii) we know that $\C{GN}(p)\le\C{CKN}^*(\vartheta(p,d),p,\lambda)$ if $\lambda=\Lambda^*(\vartheta(p,d),p)$.

\smallskip\noindent\emph{Case 1:} Assume that $\C{GN}(p)=\C{CKN}^*(\vartheta(p,d),p,\Lambda^*(\vartheta(p,d),p))$. Then for all $\lambda>\Lambda^*(\vartheta(p,d),p)$,
\[
\C{CKN}^*(\vartheta (p,d),p,\lambda)<\C{GN}(p)\leq \C{CKN}(\vartheta (p,d),p,\lambda)
\]
because $\C{CKN}^*(\theta,p,\lambda)$ is decreasing in $\lambda$, which proves the result.

\smallskip\noindent\emph{Case 2:} Assume that $\C{GN}(p)<\C{CKN}^*(\vartheta (p,d),p,\Lambda^*(\vartheta(p,d),p))$. We can always choose $\lambda>\Lambda^*(\vartheta(p,d),p)$, sufficiently close to $\Lambda^*(\vartheta(p,d),p)$, so that
\[
\C{GN}(p)<\C{CKN}^*(\vartheta (p,d),p,\lambda)\le\C{CKN}(\vartheta (p,d),p,\lambda)\;.
\]
Then \cite[Theorem 1.4 (i)]{DE2010} ensures the existence of an extremal $w_{\theta,p,\lambda}$ of \eqref{Ineq:Gen_interp_Cylinder} with $\theta=\vartheta(p,d)$. By the definition of $\Lambda^*(\vartheta(p,d),p)$, such an extremal is non $s$-symmetric. The result follows from Lemma \ref{Lem:Scaling} (ii).
\qed\end{proof}

\medskip To complete the proof of Theorem \ref{Thm:Main'}, we only need to establish the continuity of $\Lambda^*$ with respect to the parameters $(\theta,p)$ with $p\in (2, 2^*)$ and $\vartheta (p,d)\leq\theta<1$. The argument is similar to the one used in \cite{0902} for the case $\theta=1$. First of all, by using the definition of $\Lambda^*(\theta,p)$, Lemma~\ref{Cor:LambdaStar} (i) and the $s$-symmetric extremals, it is easy to see that, for any sequences $(\theta_n)_n$ and $(p_n)_n$ such that $\theta_n\to \theta$ and $p_n\to p\in (2, 2^*)$,
\[
\limsup_{n\to>+\infty}\Lambda^*(\theta_n, p_n)\leq \Lambda^*(\theta,p)\,.
\]
To see that equality actually holds, we argue by contradiction and assume that for a given sequence $\theta_n\in [\vartheta (p_n,d), 1]$ and $p_n\in (2, 2^*)$, we have:
\[
\Lambda_\infty:=\lim_{n\to +\infty}\Lambda^*(\theta_n, p_n)<\Lambda^*(\theta,p)\;.
\]
For $n$ large, fix $\lambda$ such that $\Lambda^*(\theta_n, p_n)<\lambda<\Lambda^*(\theta,p)\leq\underline\Lambda(\theta,p)$.

If $\theta>\vartheta(p,d)$, then $\theta_n>\vartheta(p_n,d)$ for $n$ large, and we find a sequence of \emph{non $s$-symmetric} extremals $w_{\theta_n, p_n, \lambda}$ that, along a subsequence, must converge to an $s$-symmetric extremal $w_{\theta,p,\Lambda}^*$, a contradiction with $\lambda<\underline\Lambda(\theta,p)$ as already noted in the introduction.

If $\theta=\vartheta(p,d)$, then, by strict monotonicity of $\C{CKN}^*$ with respect to $\lambda$, we find: $\C{GN}(p)\le\C{CKN}^*(\vartheta (p,d),p,\Lambda^*(p,d))<\C{CKN}^*(\vartheta (p,d),p,\lambda)$ and so, for $n$ sufficiently large: $\C{GN}(p)<\C{CKN}^*(\theta_n,p_n,\lambda)\le\C{CKN}(\theta_n,p_n,\lambda)$. Again by \cite [Theorem 1.4 (i)]{DE2010}, there exist non $s$-symmetric extremals $w_{\theta_n, p_n, \lambda}$ of \eqref{Ineq:Gen_interp_Cylinder} relative to the parameters $(\theta_n, p_n, \lambda)$, that, along a subsequence, must converge to an extremal of \eqref{Ineq:Gen_interp_Cylinder} relative to the parameters $(\theta,p,\Lambda)$. Since $\lambda<\Lambda^*(\theta,p)$, the limiting extremal must be $s$-symmetric and we obtain a contradiction as above. This completes the proof of Theorem \ref{Thm:Main'}.\qed

\begin{remark} As already noticed above, at $\Lambda=\Lambda^*(\theta,p)$, we have
\[
\C{CKN}(\theta,p,\Lambda^*(\theta,p))=\C{CKN}^*(\theta,p,\Lambda^*(\theta,p))
\]
and, as long as there are extremal functions, either $\Lambda^*(\theta,p)=\underline\Lambda(\theta,p)$, or a $s$-symmetric extremal and a non $s$-symmetric one may coexist. This is precisely what occurs in the framework of Theorem~\ref{Cor:counterex1}, at least for $\theta>\vartheta(p,d)$.\end{remark}

\section{Radial symmetry for the weighted logarithmic Hardy inequalities}\label{Sec:SymmetrylogHardy}

As in Section \ref{subsec:symmetrytheta}, we rephrase Theorem \ref{Thm:Mainbis} on the cylinder.
\begin{theorem}\label{Thm:Mainlog'} For all $d\geq 2$, there exists a continuous function $\Lambda^{**}$ defined on the set $\{\gamma>d/4\}$ and with values in $(0,+\infty)$ such that for all $\Lambda\in (0, \Lambda^{**}(\gamma)]$, there is an $s$-symmetric extremal of \eqref{Ineq:WLH}, while for any $\Lambda>\Lambda^{**}(\gamma)$, no extremal of \eqref{Ineq:GLogHardy-w} is $s$-symmetric. Moreover, $\Lambda^{**}(\gamma) \leq \frac14\,(4\,\gamma-1)\,(d-1)=\tilde\Lambda(\gamma)$.\end{theorem}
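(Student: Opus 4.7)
The strategy is to mimic the proof of Theorem~\ref{Thm:Main'} in Section~\ref{Sec:Symmetry}, working with the functional $\mathcal G_{\gamma,\Lambda}$ of \eqref{G} in place of $\mathcal F_{\theta,p,\Lambda}$. I would define
\[
\Lambda^{**}(\gamma):=\sup\{\Lambda>0\,:\,\mathcal G_{\gamma,\Lambda}\text{ admits only }s\text{-symmetric minimizers}\}
\]
and then prove, in order, that $\Lambda^{**}(\gamma)>0$, that $\Lambda^{**}(\gamma)\le\tilde\Lambda(\gamma)$, that at $\Lambda=\Lambda^{**}(\gamma)$ there is an $s$-symmetric extremal (provided by the explicit $w^*_{\gamma,\Lambda^{**}}$ of \cite{DDFT} together with the equality $\C{WLH}(\gamma,\Lambda^{**})=\C{WLH}^*(\gamma,\Lambda^{**})$), and finally that $\gamma\mapsto\Lambda^{**}(\gamma)$ is continuous.

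The backbone is a scaling identity analogous to \eqref{scaling3}. A direct computation with the dilation $w_\sigma(s,\omega):=w(\sigma\,s,\omega)$ and with $E(w):=\icnd{\frac{w^2}{\nrmcnd w2^2}\,\log\frac{w^2}{\nrmcnd w2^2}}$ yields
\[
\mathcal G_{\gamma,\sigma^2\Lambda}[w_\sigma]=\sigma^{2-\frac1{2\gamma}}\,\mathcal G_{\gamma,\Lambda}[w]-\sigma^{-\frac1{2\gamma}}(\sigma^2-1)\,\frac{\nrmcnd{\nabla_\omega w}2^2}{\nrmcnd w2^2\,\exp\{\tfrac1{2\gamma}\,E(w)\}}\,.
\]
Since $2-\tfrac1{2\gamma}>0$ for $\gamma>d/4\ge 1/2$, and since $\nabla_\omega w^*_{\gamma,\Lambda}\equiv 0$ reduces the identity to an equality that is compatible with the explicit scaling $\C{WLH}^*(\gamma,\sigma^2\Lambda)=\sigma^{-2+1/(2\gamma)}\C{WLH}^*(\gamma,\Lambda)$, arguing exactly as in Lemma~\ref{Lem:Scaling} yields that $\{\Lambda>0:\C{WLH}(\gamma,\Lambda)=\C{WLH}^*(\gamma,\Lambda)\}$ is an interval of the form $(0,\Lambda^{**}(\gamma)]$, with $\C{WLH}(\gamma,\lambda)>\C{WLH}^*(\gamma,\lambda)$ for every $\lambda>\Lambda^{**}(\gamma)$. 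The bound $\Lambda^{**}(\gamma)\le\tilde\Lambda(\gamma)$ is immediate from the linear instability of $w^*_{\gamma,\Lambda}$ for $\Lambda>\tilde\Lambda(\gamma)$ recalled in Section~\ref{Sec:LinearInstability}.

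The delicate step is $\Lambda^{**}(\gamma)>0$, in parallel with Proposition~\ref{Prop:critical} and Corollary~\ref{Cor:SymCritical}. Given extremals $w_n$ of $\mathcal G_{\gamma,\Lambda_n}$ with $\Lambda_n\to 0_+$, normalized so that $\nrmcnd{w_n}2=1$ and so that \eqref{t:3.7-3.8} holds, I would first establish a priori bounds in analogy with Corollary~\ref{Cor:H1}---namely that $t_n:=\nrmcnd{\nabla w_n}2^2$ stays bounded and that both $t_n$ and $\nrmcnd{w_n}\infty$ tend to zero---using the asymptotics $\C{WLH}^*(\gamma,\Lambda)\sim c\,\Lambda^{-(4\gamma-1)/(4\gamma)}\to\infty$ as $\Lambda\to 0_+$ together with Lemma~\ref{Lem:CatrinaWang} and elliptic estimates on the cylinder. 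The Euler-Lagrange equation for $\mathcal G_{\gamma,\Lambda_n}$ takes the schematic form
\[
-\Delta w_n+\Lambda_n\,w_n=\mu_n\,w_n\,\log w_n^2+\nu_n\,w_n\quad\text{on }\mathcal C
\]
with Lagrange multipliers $\mu_n,\nu_n$ determined by $w_n$, and testing it against $D_\phi(\partial_\phi w_n)$ together with the Poincar\'e inequality $\int_{\S}|D_\phi\eta|^2\,d\omega\ge(d-1)\int_{\S}|\eta|^2\,d\omega$ produces the analog of Lemma~\ref{Lem:Poincare}, which for $n$ large is violated unless $\partial_\phi w_n\equiv 0$.

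The main obstacle is the handling of the logarithmic nonlinearity $w\,\log w^2$: unlike the power $w^{p-1}$ of Lemma~\ref{Lem:Poincare}, $\log w^2$ is unbounded near $0$, so it cannot be dominated directly by $\nrmcnd w\infty$ to a power. To circumvent this, I would use the inequality $|x\log x^2|\le C_\varepsilon(x^{1-\varepsilon}+x^{1+\varepsilon})$ together with the explicit $\Lambda_n$-asymptotics of $\mu_n,\nu_n$ coming from the multiplier system, so that the nonlinear contribution to the Poincar\'e inequality still vanishes as $\nrmcnd{w_n}\infty\to 0$. Once $\Lambda^{**}(\gamma)>0$ is secured, the continuity of $\Lambda^{**}$ follows by the verbatim argument of Section~\ref{subsec:continuous-etc}: the inequality $\limsup_n\Lambda^{**}(\gamma_n)\le\Lambda^{**}(\gamma)$ uses the continuous family $w^*_{\gamma,\lambda}$ of \cite{DDFT}, while the reverse inequality is obtained by contradiction, since a limit of non $s$-symmetric extremals at parameter $\lambda<\Lambda^{**}(\gamma)\le\tilde\Lambda(\gamma)$ would provide a stable $s$-symmetric extremal, contradicting the choice of $\lambda$.
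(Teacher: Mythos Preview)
Your overall plan matches the paper's almost exactly: the same scaling identity for $\mathcal G_{\gamma,\Lambda}$, the same Lemma~\ref{Lem:Scaling}-type dichotomy, the bound $\Lambda^{**}\le\tilde\Lambda$ from linear instability, and the continuity argument transplanted from Section~\ref{subsec:continuous-etc}. The only substantive divergence is in the proof that $\Lambda^{**}(\gamma)>0$.

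There the ``main obstacle'' you flag is not an obstacle, and your proposed cure is more dangerous than the disease. After testing the Euler--Lagrange equation against $D_\phi(\partial_\phi w_n)$ and integrating by parts, the nonlinear contribution is of the form
\[
\kappa_n\int_{\mathcal C}|\chi_n|^2\,(3+\log w_n^2)\,dy,\qquad \kappa_n>0,
\]
with $\chi_n=\partial_\phi w_n$. What is needed for the Poincar\'e argument is only an \emph{upper} bound on this quantity, not control of its absolute value. Since $\log$ is increasing, $\log w_n^2\le\log\|w_n\|_\infty^2$ pointwise, so the integral is bounded above by $\kappa_n\,\|\chi_n\|_2^2\,(3+2\log\|w_n\|_\infty)$, which is $\le 0$ once $\|w_n\|_\infty$ is small enough. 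Thus the unboundedness of $\log w^2$ near $w=0$ works \emph{for} you, not against you; this is precisely how the paper proceeds. By contrast, your detour through $|x\log x^2|\le C_\varepsilon(x^{1-\varepsilon}+x^{1+\varepsilon})$ would leave you with a term $\int|\chi_n|^2\,w_n^{-\varepsilon}$ that you have no a priori control over.

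A second, minor difference: for the a priori decay $t_n\to 0$ and $\|w_n\|_\infty\to 0$, the paper does not rely on a Lemma~\ref{Lem:CatrinaWang}-type estimate but instead uses a logarithmic Sobolev inequality on the cylinder, obtained by differentiating H\"older's interpolation at $q=2$ and combining with $\C{GN}(p)$. Together with $\C{WLH}(\gamma,\Lambda_n)\to\infty$ this gives $\|\nabla w_n\|_2\to 0$ directly, and then $\mu_n\to 0$ from the energy identity. Your route via Lemma~\ref{Lem:CatrinaWang} would presumably also work, but the logarithmic Sobolev inequality is the more natural tool here.
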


\subsection{The critical regime: approaching $\Lambda=0$}

In order to prove the above theorem, we first start by showing that for $\gamma>d/4$ and $\Lambda$ close to~$0$, the extremals for \eqref{Ineq:GLogHardy-w} are $s$-symmetric. From \cite[Theorem 1.3 (ii)]{DE2010}, we know that such extremals exist.
\begin{proposition}\label{Prop:WLH-Symmetry} Let $\gamma>d/4$ and $d\geq 2$. Then, for $\Lambda>0$ sufficiently small, any extremal $w_{\gamma, \Lambda}$ of \eqref{Ineq:GLogHardy-w} is $s$-symmetric. \end{proposition}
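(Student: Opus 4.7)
The plan is to adapt the combination of Euler-Lagrange equation with the Poincaré inequality on the cylinder used in Proposition~\ref{Prop:critical} and Corollary~\ref{Cor:SymCritical} to the logarithmic setting. Fix $\gamma>d/4$ and let $w=w_{\gamma,\Lambda}$ be an extremal of~\eqref{Ineq:GLogHardy-w} normalized by $\nrmcnd{w}{2}=1$; its existence is granted by \cite[Theorem 1.3 (ii)]{DE2010}. Setting $t:=\nrmcnd{\nabla w}{2}^{2}$, a direct computation shows that the Euler-Lagrange equation for $\mathcal G_{\gamma,\Lambda}$ takes the form
\[
-\Delta w \;=\; \alpha\,w \;+\; \beta\,w\,\log w^{2} \qquad \text{on}\quad \mathcal C,
\]
with $\beta:=(t+\Lambda)/(2\gamma)>0$ and $\alpha:=t-(t+\Lambda)\,\log\bigl[\C{WLH}(\gamma,\Lambda)(t+\Lambda)\bigr]$, the latter being obtained by testing against $w$ and using the extremality identity $\icnd{w^{2}\log w^{2}}=2\gamma\log[\C{WLH}(\gamma,\Lambda)(t+\Lambda)]$.

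Assume that $w$ depends only on $s$ and on the azimuthal angle $\phi$, as in~\eqref{t:3.7-3.8}. I would multiply the equation above by $D_\phi(\partial_\phi w)$, integrate over $\mathcal C$, perform the two integrations by parts exactly as in the proof of Lemma~\ref{Lem:Poincare}, and use the Poincaré inequality $\int_{\S}|D_\phi(\partial_\phi w)|^{2}\,d\omega\geq (d-1)\int_{\S}|\partial_\phi w|^{2}\,d\omega$. The identity $\partial_\phi(w\log w^{2})=(\log w^{2}+2)\,\partial_\phi w$ and the pointwise bound $\log w^{2}\leq \log\nrmcnd{w}{\infty}^{2}$ then produce the following analogue of~\eqref{3-aaa}: any non $s$-symmetric extremal must satisfy
\be{PoincWLH}
d-1 \;\leq\; \alpha \;+\; 2\beta \;+\; \beta\,\log\nrmcnd{w}{\infty}^{2}\,.
\ee

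To conclude, I would argue by contradiction on a sequence $\Lambda_n\to 0^{+}$ with extremals $w_n$ normalized by $\nrmcnd{w_n}{2}=1$ and satisfying \eqref{t:3.7-3.8}. Since $\gamma>1/4$ (automatic from $d\geq 2$ and $\gamma>d/4\geq 1/2$), one has $\C{WLH}^{*}(\gamma,\Lambda_n)\sim\Lambda_n^{-(4\gamma-1)/(4\gamma)}\to +\infty$, so combining the extremality identity with a logarithmic Sobolev-type upper bound on $\icnd{w_n^{2}\log w_n^{2}}$ valid on $\mathcal C$ should force $t_n\to 0$, and hence $\alpha_n,\beta_n\to 0$. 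A concentration-compactness argument—relying on the decreasing monotonicity in $|s|$ from \eqref{t:3.7-3.8}, on $t_n\to 0$, and on elliptic regularity applied to the Euler-Lagrange equation—should produce a uniform bound $\nrmcnd{w_n}{\infty}\leq M$. Then the right-hand side of~\eqref{PoincWLH} is at most $\alpha_n+2\beta_n+\beta_n\log M^{2}\to 0$, contradicting $d-1>0$ unless $\partial_\phi w_n\equiv 0$ for $n$ large.

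The main obstacle I anticipate lies in this last asymptotic analysis. The non-homogeneous nonlinearity $w\log w^{2}$ breaks the Hölder and interpolation tricks used in Corollaries~\ref{Cor:H1} and~\ref{Cor:CatrinaWangUniform}, and the sign-indefinite coefficient $\alpha+\beta\log w^{2}$ complicates both the maximum-principle and Moser-iteration estimates that would yield the uniform $L^{\infty}$ bound. The key ingredients to circumvent these issues will be the decreasing monotonicity in $|s|$ from \eqref{t:3.7-3.8}, a cylinder log-Sobolev inequality to close the estimate $t_n\to 0$, and the explicit asymptotic behaviour of $\C{WLH}^{*}(\gamma,\Lambda)$ as $\Lambda\to 0$.
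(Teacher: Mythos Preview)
Your proposal is correct and follows essentially the same route as the paper: Euler--Lagrange equation, differentiation in $\phi$ combined with the Poincar\'e inequality on $\S$, and a cylinder logarithmic Sobolev inequality (see~\eqref{4-aa}) to force $t_n\to 0$. The obstacle you anticipate dissolves exactly as you suggest: once $\nrmcnd{\nabla w_n}{2}\to 0$, weak $\H^1$-convergence to $0$ together with local elliptic regularity and the monotonicity in~\eqref{t:3.7-3.8} yields $\nrmcnd{w_n}{\infty}\to 0$ without any Moser iteration, while the extremality identity $\icnd{w_n^2\log w_n^2}=2\gamma\,\log\big[\C{WLH}(\gamma,\Lambda_n)(t_n+\Lambda_n)\big]$ combined with $x\log x\to 0$ as $x\to 0^+$ gives $\alpha_n\to 0$.
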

\begin{proof} Let us consider $\gamma>d/4$ and a sequence of positive numbers $(\Lambda_n)_n$ converging to $0$. Let us denote by $(w_n)_n$ a sequence of extremals for \eqref{Ineq:GLogHardy-w} with parameter $\Lambda_n$. For simplicity, let us normalize the functions $w_n$ so that $\nrmcnd{w_n}2=1$. Moreover, we can assume that $w_n=w_n$ depends only on $s$ and the azimuthal angle $\phi\in\S$ and $\max_{\mathcal C} w_n = w_n(0, \phi_0)$ for some $\phi_0\in[0,\pi]$. Finally, $w_n$ is a minimum for $\mathcal G_{\theta,p,\Lambda}$ defined in \eqref{G}, and we have $\mathcal G_{\theta,p,\Lambda}[w_n]=1/\mathsf C_n$ with $\mathsf C_n:=\C{WLH}(\gamma,\Lambda_n)$ for any $n\in\N$. Therefore $w_n$ satisfies the Euler-Lagrange equation
\be{4-a}
-\Delta w_n - \mathsf C_n^{-1}\,w_n\,(1+\log |w_n|^2)\,\exp\left(\frac1{2\,\gamma}\,\icnd{|w_n|^2\,\log |w_n|^2 }\right)=\mu_n\,w_n
\ee
for some $\mu_n\in \R$. Multiplying this equation by $w_n$ and integrating by parts we get
\be{Eq:energy1}
\nrmcnd{\nabla w_n}2^2 -\,\mathsf C_n^{-1}\,\exp\left(\frac1{2\,\gamma}\,\icnd{|w_n|^2\,\log |w_n|^2 }\right) \icnd{w^2_n\,(1+\log |w_n|^2)}\,=\mu_n\;.
\ee
In addition, the condition $\mathcal G_{\theta,p,\Lambda}[w_n]=1/\mathsf C_n$ gives
\[
\nrmcnd{\nabla w_n}2^2+ \Lambda_n =\,\mathsf C_n^{-1}\,\exp\left(\frac1{2\,\gamma}\,\icnd{|w_n|^2\,\log |w_n|^2 }\right)\,.
\]

As in \cite{DE2010}, consider H\"older's inequality, $\| w\|_{\L^q(\mathcal C)}\le\nrmcnd w2^{\zeta}\,\nrmcnd wp^{1-\zeta}$ with $\zeta=2\,(p-q)/(q\,(p-2))$ for any $q$ such that \hbox{$2\le q\le p\le2^*$}. For $q=2$, this inequality becomes an equality, with $\zeta=1$, so that we can differentiate with respect to $q$ at $q=2$ and obtain
\[
\icnd{|w|^2\,\log\Big(\tfrac{|w|^2}{\nrmcnd w2^2}\Big)}\le\tfrac p{p-2}\,\nrmcnd w2^2\,\log\Big(\tfrac{\nrmcnd wp^2}{\nrmcnd w2^2}\Big)\;.
\]
Let $\C{GN}(p)$ be the best constant in \eqref{Ineq:GN}. Combining the two inequalities, we obtain the following \emph{logarithmic Sobolev inequality} on the cylinder: for all $d\geq 1$,
\be{4-aa}
\icnd{ {w^2}\,\log\left( \frac{w^2}{\nrmcnd w2^2}\right)} \leq \frac{d}2\,\nrmcnd w2^2\,\log \left( \frac{\nrmcnd{\nabla w}2^2}{\nrmcnd w2^2} \right) +K(d)\,\nrmcnd w2^2\,,
\ee
where
\[
K(d):=\inf_{p\in (2, 2^*)}\;\frac {p}{p-2}\,\C{GN}(p)\;.
\]
See \cite[Lemma 5]{DDFT} for more details and a sharp version, but not in Weissler's logarithmic form as it is here, of the logarithmic Sobolev inequality on the cylinder.

Applying this inequality to $w_n$, we obtain
\[
\nrmcnd{\nabla w_n}2^2+ \Lambda_n \leq \mathsf C_n^{-1}\,e^{\frac{K(d)}{2\,\gamma}}\,\left(\nrmcnd{\nabla w_n}2^2\right)^{\frac{d}{4\,\gamma}}\,.
\]
Since $\gamma>d/4$, $\Lambda_n\to 0$ and $\mathsf C_n\to +\infty$ (see \cite[Theorem B']{DDFT}), we see that $(\nabla w_n)_n$ converges to $0$ as $n\to+\infty$. On the other hand, $\nrmcnd{w_n}2=1$, so, up to subsequences, $(w_n)_n$ converges weakly and in $C^{2,\alpha}_{\rm loc}$ to $w\equiv 0$.

Now, like in the proofs of Corollary~\ref{Cor:SymCritical} by using \eqref{4-a}, we see that the function $\chi_n:=D_\phi w_n$ satisfies:
\begin{multline}\textstyle
\int_{\mathcal C}\left( |\partial_s \chi_n|^2+ |\partial_\phi \chi_n|^2\right)\,dy\\
\textstyle- \mathsf C_n^{-1}\,\exp\left(\frac1{2\,\gamma}\,\icnd{|w_n|^2\,\log |w_n|^2}\right)\,\icnd{|\chi_n|^2\,(3+2\,\log\,|w_n|^2)}
\\=\mu_n\,\nrmcnd{\chi_n}2^2\;.
\end{multline}
Hence, by means of the Poincar\'e inequality we derive
\begin{multline*}
(d-1-\mu_n)\,\nrmcnd{\chi_n}2^2\\
\textstyle\leq \mathsf C_n^{-1}\,\exp\left(\frac1{2\,\gamma}\,\icnd{|w_n|^2\,\log |w_n|^2 }\right)\,\icnd{|\chi_n|^2\,(3+2\,\log w_n)}\hspace*{2cm}\\
\textstyle\leq \mathsf C_n^{-1}\,\exp\left(\frac1{2\,\gamma}\,\icnd{|w_n|^2\,\log |w_n|^2 }\right)\,\nrmcnd{\chi_n}2^2\,(3+2\,\log\,(\nrmcnd{w_n}\infty))\leq 0
\end{multline*}
for $n$ large, since $\nrmcnd{w_n}\infty$ converges to $0$ as $n\to+\infty$. Next observe that by the strong convergence of $(\nabla w_n)_n$ to $0$ in $L^2(\R^d)$ \eqref{Eq:energy1} and by the logarithmic Sobolev inequality \eqref{4-aa}, we obtain $\lim_{n\to\infty}\mu_n=0$. So, necessarily $\chi_n\equiv 0$ for $n$ large and the proof is complete.
\qed\end{proof}

\subsection{The proof of Theorem \ref{Thm:Mainlog'}}

Consider the functional $\mathcal G_{\gamma,\Lambda}$ defined in \eqref{G}. If $w\in\H^1(\mathcal C)\setminus\{0\}$, let $w_\sigma(s,\omega):=w(\sigma\,s,\omega)$ for any $\sigma>0$. A simple calculation shows that for all $\sigma>0$,
\[
\mathcal G_{\gamma,\sigma^2\,\Lambda}[w_\sigma]= \sigma^{2-\frac1{2\,\gamma}}\,\mathcal G_{\gamma,\Lambda}[w]- \frac{(\sigma^2-1)\,\sigma^{-\frac1{2\,\gamma}}\,\nrmcnd{\nabla_\omega w}2^2}{\nrmcnd w2^2\,\exp\left\{\frac1{2\,\gamma}\,\icnd{ \frac{w^2}{\nrmcnd w2^2}\,\log\left( \frac{w^2}{\nrmcnd w2^2} \right)}\right\}}\;.
\]
The above expression is the counterpart of \eqref{scaling3} in the case of the weighted logarithmic Hardy inequality and we can even observe that $2-\frac1{2\,\gamma}=\lim_{p\to 2_+}(2-\frac1\theta+\frac2{p\,\theta})$ when $\theta=\gamma\,(p-2)$. We use it exactly as in Section \ref{subsec:symmetrytheta} to prove that for any $d\ge 2$, $\Lambda>0$ and $\gamma>d/4$, the following properties hold:
\begin{itemize}
\item[(i)] If $\C{CKN}(\gamma,\Lambda)=\C{WLH}^*(\gamma,\Lambda)$, then $\C{CKN}(\gamma,\lambda)=\C{WLH}^*(\gamma,\lambda)$ and, after a proper normalization, $w_{\gamma,\lambda}=w^*_{\gamma,\lambda}$, for any $\;\lambda\in(0,\Lambda)$.
\item[(ii)] If there is there is an extremal $w_{\gamma,\Lambda}$, which is not $s$-symmetric, even up to translations in the $s$-direction, then $\C{WLH}(\gamma,\lambda)>\C{WLH}^*(\gamma,\lambda)$ for all $\lambda>\Lambda$.
\end{itemize}
At this point, in view of Proposition \ref{Prop:WLH-Symmetry} and by recalling the role of the function $\tilde a$ in \eqref{1-8}, we can argue as in Section~\ref{subsec:continuous-etc} to prove the existence of a \emph{continuous} function $\Lambda^{**}$ defined on $(d/4,\infty)$, such that
\begin{itemize}
\item[(i)] $0<\Lambda^{**}(\gamma)<\tilde\Lambda(\gamma)$,
\item[(ii)] if $\lambda\in(0, \Lambda^*(\gamma))$, then $\C{WLH}(\gamma,\lambda)=\C{WLH}^*(\gamma,\lambda)$ and, after a proper normalization, $w_{\gamma,\lambda}=w^*_{\gamma,\lambda}$,
\item[(iii)] if $\lambda= \Lambda^{**}(\gamma)$, then $\C{WLH}(\gamma,\lambda)=\C{WLH}^*(\gamma,\lambda)$,
\item[(iv)] if $\lambda>\Lambda^{**}(\gamma)$, then $\C{WLH}(\gamma,\lambda)>\C{WLH}^*(\gamma,\lambda)$.
\end{itemize}
This concludes the proof of Theorems \ref{Thm:Mainlog'}.\qed

\section{New symmetry breaking results}\label{Sec:Symmetry-breaking-examples}

This section is devoted to the proof of Theorems~\ref{Cor:counterex1} and \ref{Cor:counterex2}. We prove symmetry breaking in the range of parameters where the radial extremal is a strict, local minimum for the variational problem associated to inequalities \eqref{Ineq:CKN} and \eqref{Ineq:WLH}. Consider the optimal constants in the limit cases given respectively by $\theta=\vartheta(p,d)$ and $\gamma=d/4$. We recall that
\[\label{5-1gab}
\frac{1}{\C{CKN}(\vartheta(p,d),p,\Lambda)}= \inf_{u\in D^{1,2}_{a}(\R^d)\setminus\{0\}} \frac{\nrm{|x|^{-a}\,\nabla u}2^{2\,\vartheta(p,d)}\,\nrm{|x|^{-(a+1)}\,u}2^{2\,(1-\vartheta(p,d))}}{\nrm{|x|^{-b}\,u}p^2}
\]
and
\[\label{5-2gab}
\frac{1}{\C{WLH}(d/4,\Lambda)}= \inf\nrm{|x|^{-a}\,\nabla u}2^2\,\exp\left[-\tfrac2{d}\ird{\!\tfrac{|u|^2}{|x|^{2\,(a+1)}}\,\log \big(|x|^{2\,(a_c-a)}\,|u|^2 \big)}\right]
\]
where the last infimum is taken on the set of the functions $u\in D^{1,2}_{a}(\R^d)$ such that $\nrm{|x|^{-\,(a+1)}\,u}2=1$. We also define the best constants in Gagliardo-Nirenberg and logaritmic Sobolev inequalities respectively by
\[\label{5-a}
\frac 1{\C{GN}(p)}:=\inf_{u\in\H^1(\R^d)\setminus\{0\}} \frac{\nrm{\nabla u}2^{2\,\vartheta(p,d)}\,\nrm u2^{2\,(1-\vartheta(p,d))}}{\nrm up^2}
\]
and
\[
\frac 1{\C{LS}}:=\inf_{\substack{u\in\H^1(\R^d)\\ \nrm u2=1}} \ird{|\nabla u|^2}\;\exp\left[-\tfrac2{d}\ird{|u|^2\,\log|u|^2}\right]
\]
It is well known (see for instance \cite{MR479373}) that $\C{LS}=\frac 2{\pi\,d\,e}$.
\begin{lemma}\label{Lem:comparison} Let $d\geq 3$ and $p\in(2,2^*)$. For all $a<a_c$, we have
\[
\C{GN}(p)\le\C{CKN}(\vartheta(p,d),p,\Lambda)\quad\mbox{and}\quad \C{LS}\le\C{WLH}(d/4,\Lambda)\;.
\]
If $d=2$, the first inequality still holds while the second one is replaced by $\C{LS}\le\limsup_{\gamma\to(1/2)_+}\C{WLH}(\gamma,\Lambda)$.\end{lemma}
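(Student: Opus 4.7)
The plan is a concentration argument: test inequalities \eqref{Ineq:CKN} and \eqref{Ineq:WLH} against a fixed compactly supported function translated far from the origin. Concretely, fix $v\in C_c^\infty(\R^d)$ and set $u_R(x):=v(x-R e_1)$ (with a further rescaling in the WLH case), for $R\to\infty$. Since the support of $u_R$ is concentrated near $R e_1$, on that support we have $|x|=R\,(1+o(1))$ uniformly, so each weighted integral splits as an explicit power of $R$ times the corresponding unweighted integral of $v$, up to a factor $1+o(1)$. The argument is then reduced to checking that the scaling exponents in $R$ on both sides match at the critical exponent.

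For \eqref{Ineq:CKN}, the three integrals become
$$\int\frac{|u_R|^p}{|x|^{bp}}\sim R^{-bp}\nrm vp^p,\quad\int\frac{|\nabla u_R|^2}{|x|^{2a}}\sim R^{-2a}\nrm{\nabla v}2^2,\quad\int\frac{|u_R|^2}{|x|^{2(a+1)}}\sim R^{-2(a+1)}\nrm v2^2.$$
The constraint $p=p(a,b)$ reads $b-a=d/p-a_c$, so the exponent of $R$ on the right of \eqref{Ineq:CKN} equals $-2a-2(1-\theta)$ while that on the left equals $-2b=-2a-2(d/p-a_c)$; these agree precisely when $\theta=d/2-d/p=\vartheta(p,d)$. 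Passing to the limit $R\to\infty$ at this critical value yields \eqref{Ineq:GN} with constant $\C{CKN}(\vartheta(p,d),p,\Lambda)$, whence $\C{GN}(p)\le\C{CKN}(\vartheta(p,d),p,\Lambda)$.

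For \eqref{Ineq:WLH} with $d\ge 3$ and $\gamma=d/4$, rescale $u_R(x):=R^{a+1}v(x-R e_1)$ with $\nrm v2=1$, which enforces $\int|x|^{-2(a+1)}|u_R|^2\to 1$. The two sides of \eqref{Ineq:WLH} expand as
$$d\log R+\ird{v^2\log v^2}+o(1)\quad\text{and}\quad 2\gamma\log\C{WLH}(\gamma,\Lambda)+4\gamma\log R+2\gamma\log\nrm{\nabla v}2^2+o(1).$$
At the critical value $\gamma=d/4$ the identity $4\gamma=d$ cancels the $\log R$ terms, and the limit is exactly Weissler's logarithmic Sobolev inequality with constant $\C{WLH}(d/4,\Lambda)$, giving $\C{LS}\le\C{WLH}(d/4,\Lambda)$. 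For $d=2$, where $\gamma=1/2$ is excluded from the statement of \eqref{Ineq:WLH}, the same computation is run at $\gamma=1/2+\varepsilon$ but $R=R(\varepsilon)$ is coupled to $\varepsilon$ so that $(4\gamma-2)\log R=4\varepsilon\log R\to 0$ (for instance $R=1/\varepsilon$) while still $R\to\infty$; the surplus $R$-term disappears in the limit $\varepsilon\to 0_+$, yielding the $d=2$ log Sobolev inequality with constant $\limsup_{\gamma\to(1/2)_+}\C{WLH}(\gamma,\Lambda)$. The only technical point throughout is the uniform control of $|x|/R$ on $\mathrm{supp}\,u_R$, which is immediate from the fixed compact support of $v$ and makes all remainder terms $o(1)$; this is the step I would write out most carefully, although no new idea beyond the standard translation-concentration trick is needed.
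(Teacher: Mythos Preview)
Your proof is correct and follows essentially the same translation-to-infinity approach as the paper: the paper translates an \emph{extremal} $u$ of the Gagliardo--Nirenberg (resp.\ logarithmic Sobolev) inequality by $u_n(x)=u(x+n\,\mathsf e)$ and passes to the limit in the CKN (resp.\ WLH) quotient (written there in the $v=|x|^{-a}u$ reformulation of Section~\ref{Sec:Symmetrization}), whereas you translate a generic $v\in C_c^\infty$ and then optimize over $v$ --- a cosmetic variation of the same idea, with your choice of compactly supported test functions making the control of $|x|/R$ on the support entirely trivial. Your treatment of the $d=2$ case, coupling $R=R(\varepsilon)$ to $\varepsilon$ so that the surplus $(4\gamma-2)\log R$ term vanishes while still $R\to\infty$, is in fact more explicit than the paper's one-line remark that ``it is enough to repeat the computation for a well chosen sequence $(\gamma_n)_n$''.
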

\begin{proof} Consider an extremal $u$ for either the Gagliardo-Nirenberg or the logaritmic Sobolev inequality. It is known that such a solution exists, is unique up to multiplication by constants, translations and scalings (in case of the logaritmic Sobolev inequalities, take for instance $u(x)=(2\,\pi)^{-d/4}\,\exp(-|x|^2/4)$ for any $x\in\R^d$). Let $\mathsf e\in\S$ and use $u_n(x):=u(x+n\,\mathsf e)$, $n\in\N$, as a sequence of test functions for the quotients defining $\C{CKN}(\vartheta(p,d),p,\Lambda)$ and $\C{WLH}(d/4,\Lambda)$ respectively. We first use the reformulation of \eqref{Ineq:CKN} used in Section~\ref{Sec:Symmetrization} in terms of $v(x)=|x|^{-a}\,u(x)$, and observe that, for $\theta=\vartheta(p,d)=1-(b-a)$, $d\ge 2$, we have
\begin{multline*}
\frac{1}{\C{CKN}(\theta,p,\Lambda)}\\
= \inf_{v\in H^1(\R^d)\setminus\{0\}}\,\frac{\(\nrm{\nabla v}2^2+a\,(a-2\,a_c)\,\nrm {|x|^{-1}\,v}2^2\)^\theta\,\nrm {|x|^{-1}\,v}2^{2\,(1-\theta)}}{\nrm{|x|^{\theta-1}\,v}p^2}\\
\hspace*{-24pt}\leq\frac{\(\nrm{\nabla u_n}2^2+a\,(a-2\,a_c)\,\nrm {|x|^{-1}\,u_n}2^2\)^\theta\,\nrm {|x|^{-1}\,u_n}2^{2\,(1-\theta)}}{\nrm{|x|^{\theta-1}\,u_n}p^2}\\
=\frac{\(\nrm{\nabla u}2^2+\frac{a\,(a-2\,a_c)}{n^2}\,\nrm {|\frac{x}{n}-e|^{-1}\,u}2^2\)^\theta\,\nrm {|\frac{x}{n}-e|^{-1}\,u}2^{2\,(1-\theta)}}{\nrm{|\frac{x}{n}-e|^{1-\theta}\,u}p^2}\\
\longrightarrow_{_{\hspace{-7mm}n\to +\infty}}\frac{\(\nrm{\nabla u}2^2\)^\theta\,\nrm {u}2^{2\,(1-\theta)}}{\nrm{u}p^2}=\frac1{\C{GN}(p)}\;.
\end{multline*}
The inequality $\C{LS}\le\C{WLH}(d/4,\Lambda)$ follows from a similar computation if $d\ge 3$. If $d=2$, it is enough to repeat the computation for a well chosen sequence $(\gamma_n)_n$ such that $\gamma_n>1/2$ for any $n\in\N$ and $\lim_{n\to\infty}\gamma_n=1/2$.\qed\end{proof}

\noindent{\sl Proof of Theorem~\ref{Cor:counterex1}.} Let $\mathsf g(x):=(2\,\pi)^{-d/4}\,\exp(-|x|^2/4)$ for any $x\in\R^d$ and consider the function
\[
h(p,d):=\frac{\nrm{\nabla\mathsf g}2^{2\,\vartheta(p,d)}\,\nrm{\mathsf g}2^{2\,(1-\vartheta(p,d))}}{\nrm{\mathsf g}p^2}\;.
\]
A tedious but elementary computation provides an explicit value for $h(p,d)$ in terms of $\Gamma$ functions, that can be used to get the estimate
\[
\frac 1{\C{CKN}(\vartheta(p,d),p,\Lambda(a_-(p)))}\le\frac 1{\C{GN}(p)}\le h(p,d)
\]
where $a_-(p)=\underbar a(\vartheta(p,d),p)$. Consider the function
\[
\mathsf L(p,d):=h(p,d)\,\C{CKN}^*\big(\vartheta(p,d),p,\Lambda(a_-(p))\big)\;.
\]
Explicit computations show that $\lim_{p\to2_+}\mathsf L(p,d)=1$ and $\ell(d):=\lim_{p\to2_+}\frac{\partial\,\mathsf L}{\partial p}(p,d)$ is an increasing function of~$d$ such that $\lim_{d\to\infty}\ell(d)=-\frac 14\,\log 2<0$. Hence, for any given $d\ge 2$, there exists an $\eta>0$ such that $\mathsf L(p,d)<1$ for any $p\in(2,2+\eta)$. See Fig.~2. As a consequence, we have
\[
h(p,d)<\frac1{\C{CKN}^*(\vartheta(p,d),p,\Lambda(a_-(p)))}
\]
provided $0<p-2<\eta$, with $\eta$ small enough, thus proving that $\C{CKN}^*(\theta,p,\Lambda)<\C{CKN}(\theta,p,\Lambda)$ if $\theta=\vartheta(p,d)$ and $a=a_-(p)$. By continuity and according to Theorem~\ref{Thm:Main'} (ii), the strict inequality also holds for $\theta$ close to $\vartheta(p,d)$ and $a$ close to $a_-(p)$, as claimed.\qed
\begin{figure}[!ht]\begin{center}\includegraphics[width=10cm]{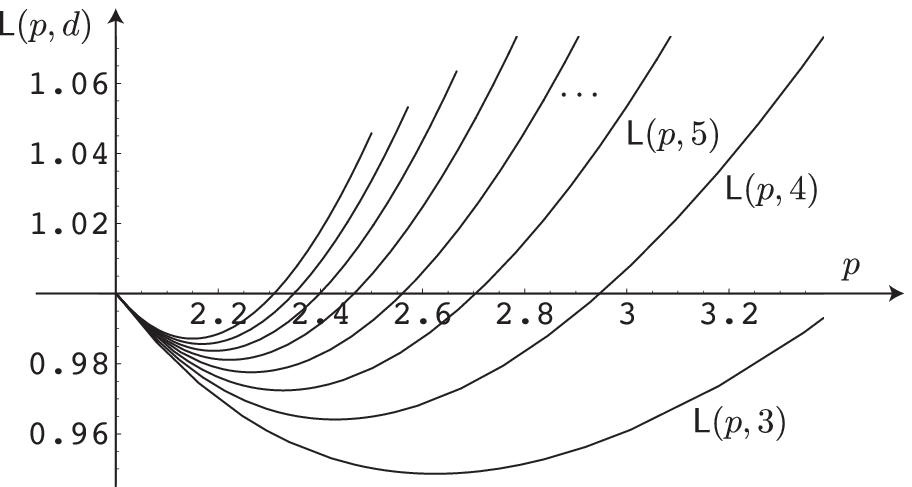}\caption{\small Plots of $\mathsf L(p,d)$ as a function of $p$ for $d=3$, \ldots $10$. }\end{center}\end{figure}

\medskip\noindent{\sl Proof of Theorem~\ref{Cor:counterex2}.} For the weighted logarithmic Hardy inequality \eqref{Ineq:WLH}, the same method applies. From the explicit estimates of $\C{LS}$ and $\C{WLH}^*(\gamma,\Lambda)$, it is a tedious but straightforward computation to check that $\C{WLH}^*(\gamma,\Lambda)<\C{LS}$ if and only if $\Lambda(a)>\Lambda_{\rm SB}(\gamma,d)$, where $\Lambda_{\rm SB}$ has been defined in \eqref{Eqn:LambdaSB}. As a special case, notice that $\C{WLH}^*(d/4,\Lambda(-1/2))<\C{LS}$ if $d\ge 3$, while, for $d=2$, we have:
\[
\lim_{\gamma\to (1/2)_+}\C{WLH}^*(\gamma,\Lambda(-1/2))<\C{LS}\;.
\]
See Fig.~3. By continuity, the inequality $\C{WLH}^*(\gamma,\tilde\Lambda(\gamma))<\C{LS}$ remains valid for $\gamma>d/4$, provided $\gamma-d/4>0$ is small enough. This completes the proof.\qed
\begin{figure}[!ht]\begin{center}\includegraphics[width=10cm]{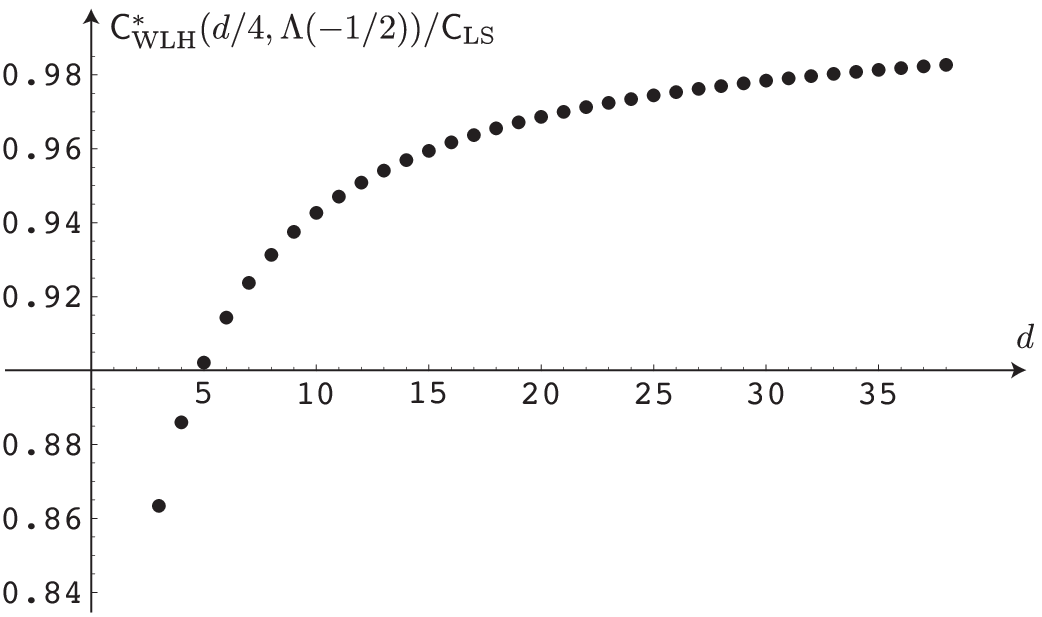}\caption{\small Plot of $\C{WLH}^*(d/4,\Lambda(-1/2))/\C{LS}$ in terms of $d\in\N$, $d\ge 3$.}\end{center}\end{figure}

\begin{remark} The condition $\C{LS}<\C{WLH}^*(d/4,\Lambda)$ amounts to $a\in(a_\star,a_c)$ for some explicit $a_\star$ and from \cite[Theorem 1.4]{DE2010} we know that this is a sufficient condition for the existence of an extremal function for \eqref{Ineq:WLH}. The symmetry breaking results of Theorem~\ref{Cor:counterex2} hold for any $a\in(-\infty,a_\star)$. In that case, the existence of an extremal for \eqref{Ineq:WLH} is not known if $\gamma=d/4$, $d\ge 3$, but it is granted by \cite[Theorem 1.3]{DE2010} for any $\gamma>d/4$, $d\ge 2$.\end{remark}

Compared with the result in Proposition~\ref{Thm:WLH-SymmetryBreaking}, we see by numerical calculations that $\Lambda(a)>\tilde\Lambda(\gamma)$ is more restrictive than $\Lambda(a)>\Lambda_{\rm SB}(\gamma,d)$ except if $d=2$ and $\gamma\in[0.621414\ldots, 6.69625\ldots]$, $d=3$ and $\gamma\in[0.937725\ldots, 4.14851\ldots]$, or $d=4$ and $\gamma\in[1.31303\ldots, 2.98835\ldots]$. For $d\ge 5$, we observe that $\Lambda_{\rm SB}(\gamma,d)<\tilde\Lambda(\gamma)$. See Fig.~4.
\begin{figure}[!ht]\begin{center}\includegraphics[width=10cm]{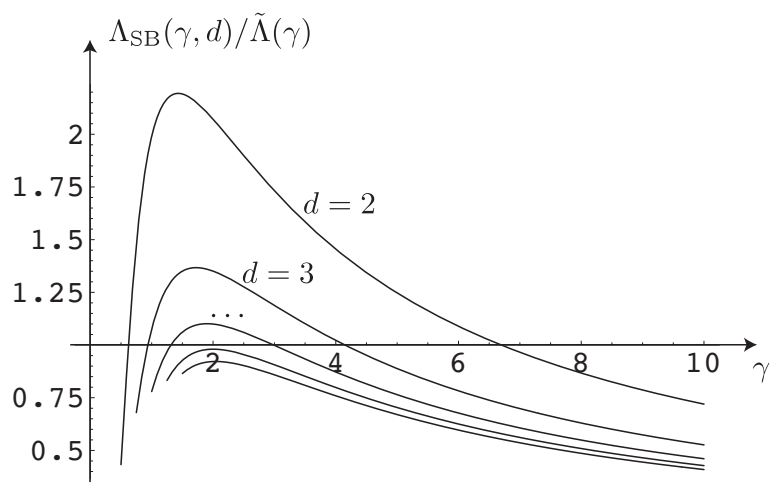}\caption{\small Plot of $\Lambda_{\rm SB}(\gamma,d)/\tilde\Lambda(\gamma)$ as a function of $\gamma$, for $d=2$, $3$, \ldots$6$.}\end{center}\end{figure}

\medskip As a concluding remark for the weighted logarithmic Hardy inequality, we emphasize the fact that, in many cases, the comparison with the logarithmic Sobolev inequality gives better informations about the symmetry breaking properties of the extremals than methods based on a linearization approach.

\begin{acknowledgements} This work has been partially supported by the projects CBDif and EVOL of the French National Research Agency (ANR) and by the FIRB-ideas project ``Analysis and beyond".
\par\smallskip\noindent{\small\copyright\,2010 by the authors. This paper may be reproduced, in its entirety, for non-commercial purposes.}
\end{acknowledgements}

\begin{thebibliography}{10}

\bibitem{Caffarelli-Kohn-Nirenberg-84}
{\sc L.~Caffarelli, R.~Kohn, and L.~Nirenberg}, {\em First order interpolation
  inequalities with weights}, Compositio Math., 53 (1984), pp.~259--275.

\bibitem{Catrina-Wang-01}
{\sc F.~Catrina and Z.-Q. Wang}, {\em On the {C}affarelli-{K}ohn-{N}irenberg
  inequalities: sharp constants, existence (and nonexistence), and symmetry of
  extremal functions}, Comm. Pure Appl. Math., 54 (2001), pp.~229--258.

\bibitem{MR1223899}
{\sc K.~S. Chou and C.~W. Chu}, {\em On the best constant for a weighted
  {S}obolev-{H}ardy inequality}, J. London Math. Soc. (2), 48 (1993),
  pp.~137--151.

\bibitem{DDFT}
{\sc M.~Del~Pino, J.~Dolbeault, S.~Filippas, and A.~Tertikas}, {\em A
  logarithmic {H}ardy inequality}, Journal of Functional Analysis, 259 (2010),
  pp.~2045 -- 2072.

\bibitem{DE2010}
{\sc J.~Dolbeault and M.~J. Esteban}, {\em Extremal functions for
  {C}affarelli-{K}ohn-{N}irenberg and logarithmic {H}ardy inequalities}.
\newblock Preprint, 2010.

\bibitem{0902}
{\sc J.~Dolbeault, M.~J. Esteban, M.~Loss, and G.~Tarantello}, {\em On the
  symmetry of extremals for the {C}affarelli-{K}ohn-{N}irenberg inequalities},
  Adv. Nonlinear Stud., 9 (2009), pp.~713--726.

\bibitem{DET}
{\sc J.~Dolbeault, M.~J. Esteban, and G.~Tarantello}, {\em The role of {O}nofri
  type inequalities in the symmetry properties of extremals for
  {C}affarelli-{K}ohn-{N}irenberg inequalities, in two space dimensions}, Ann.
  Sc. Norm. Super. Pisa Cl. Sci. (5), 7 (2008), pp.~313--341.

\bibitem{MR1362756}
{\sc M.~J. Esteban and M.~Ramaswamy}, {\em Nonexistence result for positive
  solutions of nonlinear elliptic degenerate problems}, Nonlinear Anal., 26
  (1996), pp.~835--843.

\bibitem{Felli-Schneider-03}
{\sc V.~Felli and M.~Schneider}, {\em Perturbation results of critical elliptic
  equations of {C}affarelli-{K}ohn-{N}irenberg type}, J. Differential
  Equations, 191 (2003), pp.~121--142.

\bibitem{MR634248}
{\sc B.~Gidas, W.~M. Ni, and L.~Nirenberg}, {\em Symmetry of positive solutions
  of nonlinear elliptic equations in {${\mathbb R}\sp{n}$}}, in Mathematical
  analysis and applications, {P}art {A}, vol.~7 of Adv. in Math. Suppl. Stud.,
  Academic Press, New York, 1981, pp.~369--402.

\bibitem{MR1731336}
{\sc T.~Horiuchi}, {\em Best constant in weighted {S}obolev inequality with
  weights being powers of distance from the origin}, J. Inequal. Appl., 1
  (1997), pp.~275--292.

\bibitem{MR2053993}
{\sc C.-S. Lin and Z.-Q. Wang}, {\em Erratum to: ``{S}ymmetry of extremal
  functions for the {C}affarelli-{K}ohn-{N}irenberg inequalities'' [{P}roc.\
  {A}mer.\ {M}ath.\ {S}oc.\ {132} (2004), no.\ 6, 1685--1691]}, Proc. Amer.
  Math. Soc., 132 (2004), p.~2183 (electronic).

\bibitem{Lin-Wang-04}
\leavevmode\vrule height 2pt depth -1.6pt width 23pt, {\em Symmetry of extremal
  functions for the {C}affarrelli-{K}ohn-{N}irenberg inequalities}, Proc. Amer.
  Math. Soc., 132 (2004), pp.~1685--1691 (electronic).

\bibitem{MR2001882}
{\sc D.~Smets and M.~Willem}, {\em Partial symmetry and asymptotic behavior for
  some elliptic variational problems}, Calc. Var. Partial Differential
  Equations, 18 (2003), pp.~57--75.

\bibitem{MR479373}
{\sc F.~B. Weissler}, {\em Logarithmic {S}obolev inequalities for the
  heat-diffusion semigroup}, Trans. Amer. Math. Soc., 237 (1978), pp.~255--269.

\end{thebibliography}

\end{document}